\documentclass[11pt]{article}
\usepackage[utf8]{inputenc}
\usepackage[T1]{fontenc}
\usepackage{latexsym,amssymb,amsmath,amsfonts,amsthm}
\usepackage{graphics}
\usepackage[section]{placeins}
\usepackage{graphicx}
\usepackage{mathrsfs}
\usepackage{subfigure}
\usepackage{float}
\usepackage{color}
\usepackage{mathtools}
\usepackage{tikz}
\usetikzlibrary{calc}
\usepackage{epstopdf}
\newcommand{\R}{{\mathbb R}}

\newcommand{\be}{\begin{eqnarray}}
\newcommand{\ben}{\begin{eqnarray*}}
\newcommand{\en}{\end{eqnarray}}
\newcommand{\enn}{\end{eqnarray*}}

\newcommand{\pa}{\partial}

\newcommand{\real}{{\rm Re\,}}
\newcommand{\ima}{{\rm Im\,}}

\newcommand{\s}{\mathbb{S}}

\newcommand{\range}{{\rm Range}}

\newcommand{\supp}{\mbox{supp}}

\newtheorem{theorem}{Theorem}[section]

\newtheorem{lemma}[theorem]{Lemma}
\newtheorem{corollary}[theorem]{Corollary}

\newtheorem{assumption}{Assumption}[section]

\definecolor{mgq}{rgb}{0,0,0}
\definecolor{ghx}{rgb}{0,0,0}
\definecolor{rot1}{rgb}{0,0,0}
\definecolor{rot}{rgb}{0,0,0}
\definecolor{rot2}{rgb}{0,0,0}
\definecolor{mgq1}{rgb}{0,0,0}
\definecolor{mgq2}{rgb}{0,0,0}
\newcommand{\mgq}{\color{mgq}}
\newcommand{\ghx}{\color{ghx}}

\newcommand{\mgqa}{\color{mgq1}}
\newcommand{\mgqb}{\color{mgq2}}

\begin{document}
\renewcommand{\theequation}{\arabic{section}.\arabic{equation}}
\begin{titlepage}
\title{\bf  A frequency-domain method to inverse moving source problem with unknown radiating moment}

\author{Guanqiu Ma\thanks{School of Mathematical Sciences, Sichuan Normal University, 610066 Chengdu, China. ({\tt gqma@sicnu.edu.cn})} \and
Hongxia Guo\thanks{Corresponding author: School of Mathematical Sciences and Institute of Mathematics and Interdisciplinary Sciences, Tianjin Normal  University, 300384, Tianjin, China. ({\tt hxguo@tjnu.edu.cn})}  \and Guanghui Hu\thanks{School of Mathematical Sciences and LPMC, Nankai University, 300071 Tianjin, China. ({\tt ghhu@nankai.edu.cn})}
}
\date{}
\end{titlepage}
\maketitle

%\vspace{.2in}

\begin{abstract}

This paper introduces a multi-frequency factorization method for imaging a time-dependent source, specifically to recover its spatial support and the associated excitation instants.
 Subject to appropriate assumptions on the source velocity and wave speed, the proposed method is capable of reconstructing the orbit and shape of a moving extended source.
Using far-field data from two opposite directions, we establish a computational criterion that characterizes both the unknown pulse moments and the narrowest strip (perpendicular to the direction) enclosing the source support. Central to our inversion scheme is the construction of indicator functions, defined pointwisely over the spatial and temporal sampling variables.
The proposed inversion scheme permits the recovery of the $\Theta$-convex support domain from far-field data at sparse observation directions. Uniqueness in determining the convex hull of the support and the excitation instants-using all observation directions-is also established as a direct consequence of the factorization method. The effectiveness and feasibility of the approach are examined through comprehensive numerical simulations in two and three  dimensions.

\vspace{.2in} {\bf Keywords: Inverse source problem, Helmholtz equation, time-dependent sources, moving extend source, multi-frequency far-field data, factorization method.
}
\end{abstract}

\section{Introduction}
\subsection{Problem formulation in the time domain}
Assume that the space $\mathbb{R}^3$ is filled by a homogeneous and isotropic acoustic medium. We designate the sound speed in the background medium as the constant $c > 0$.  Consider the acoustic radiating problem incited by a moving extended source with the time-varying source profile $D(t)\subset \R^3$ ($t>0$), which emits wave signals at discrete time moments $t_j\in[0,T]$, $j=0,1,2,\cdots J$ for some $T>0$.
This source traces a trajectory defined by the continuous function $a(t): [0, T] \rightarrow \mathbb{R}^3$. For each $t_j>0$,
we suppose that $D_j:=D(t_j) \subset \R^3$ is a bounded Lipschitz domain such that $\R \backslash  \overline{D}_j$ is connected. %We denote that  $D_j = \{x\in \R^3: x = y+a(t_j) \mbox{ for some } y\in { \mgq D}\}$ the source support at $t=t_j$.
Then
the propagation of the radiated wave fields $U(x,t)$ is governed by the initial value problem
\begin{equation}\label{timeeqn}
\left\{
\begin{aligned}
&c^{-2}\frac{\partial^2 U}{\partial t^2} = \Delta U + \tilde{S}(x,t), \quad &&(x, t) \in \mathbb{R}^3 \times \mathbb{R}_+,\quad \mathbb{R}_+ \coloneqq \{t\in \R: t>0\},\\
&U(x,0)=\partial_t U(x,0) = 0, &&x\in \mathbb{R}^3.
\end{aligned}
\right.
\end{equation}
%The time-dependent source function $\tilde{S}(x,t)$ is supposed to radiate wave signal at finite unknown moments $\{t_{j}, j=0,1,2,\cdots,J\} \subset [0, T]$.
where the source function $\tilde{S}(x,t)$ is of the form
\begin{equation}
\tilde{S}(x,t) = \sum\limits_{j=0}^{J} S_j(x-a(t))\;\delta(t-t_j).
\end{equation}
Here $\delta$ denotes the Dirac delta function. We are interested in the inverse problem of recovering $D=D_j$ and the excitation instants $t_j$ from wave signals recorded at far fields.

In this work, we make the following assumptions on the trajectory function $a$, the source profile functions $S_j$ and the impulsive moments $t_j>0$.

%For the separability and sequence-preserving property of the excitation signal, $S(x)$ and satisfy the following assumptions:

\begin{assumption}\label{sump}
\,
\begin{itemize}
	{\mgq \item  $a(t)\in C([0,T])$ is a real-valued function, and $|a'(t)|<c, t\in [0,T]$.
	\item  $t_j-t_{j-1}>{\rm{diam}} (D_{j-1}) /c,\, j=1,2,\cdots,J$. Here ${\rm{diam}}(D) := \sup_{x,y\in D}|x-y|$ denotes the diameter of the domain $D$.
	\item  $S_j(x)\in C(\overline{D}_j)$ is a real-valued function, and ${\rm supp} (S_j)=\overline{D}_j \subset B_R$ with some $R>0$. Here $B_R= \{x\in \R^3:|x|<R\}$.
	\item  $|S_j(x)|>0$, a.e. $x\in \overline{D}_j$.}
\end{itemize}
\end{assumption}
According to the first assumption, the extended source travels slower than the wave speed. The second assumption ensures that wave signals generated at different instants remain separable due to non-overlapping supports. This can be rigourously derived from the explicit expression of $U(x, t)$ in terms of the convolution of $\tilde{S}$ and the fundamental solution $G$.
{\ghx Below we will show a simple example to illustrate the separability of wave signals.
Suppose that the extended source is moving along the trajectory $a(t)=(-\frac{1}{2}t, 0, 0)$,  the signal excitation instants are $t_j= j, j= 0,1,...,6$,  and the source function is
$$\tilde{S}(x,t) = 1000 \sum\limits_{j=0}^{6}
\frac{e^{-\frac{|x-a(t)|^2}{2\eta}}}{\sqrt{2\pi}\eta}  \cos t \; \delta(t-t_j), \;\eta=0.01.$$
Here we have supposed that the source profile does not change along the time.
We set the position of the receiver at $x_0=(3, 0, 0)$ and the wave speed $c=1$. Then
$U(x,t)$ can be expressed as
\begin{equation}
U(x,t)=\tilde{S}(x,t)*G(x,t)=  \sum\limits_{j=0}^{J} \int_{|x-y|=t-t_j} \frac{\tilde{S}(y-a(t_j))}{4\pi(t-t_j)}\, ds(y),
\end{equation}
where $*$ denotes convolution between $G$ and $\tilde S$ with respect to both $t$ and $x$, and
\begin{equation}
G(x,t)=\frac{\delta(t-|x|)}{4\pi |x|},\quad t\neq |x|,\quad |x|\neq 0,
\end{equation}
is the Green's function of the wave operator $\pa^2_t-\Delta$ in $\R^3\times\R_+$.
Then we show the radiated waves $U(x_0,t)$  VS time $t$ in Figure \ref{fig:a}.
It is obvious that the radiating signals is separable with respect to time $t$. The arrival moments $t=3, 4.5, 6, 7.5, 9, 10.5, 12$ can be predicted by the travelling time between the location of emitters and receiver.}

\begin{figure}[H]
\centering
\includegraphics[scale=0.4]{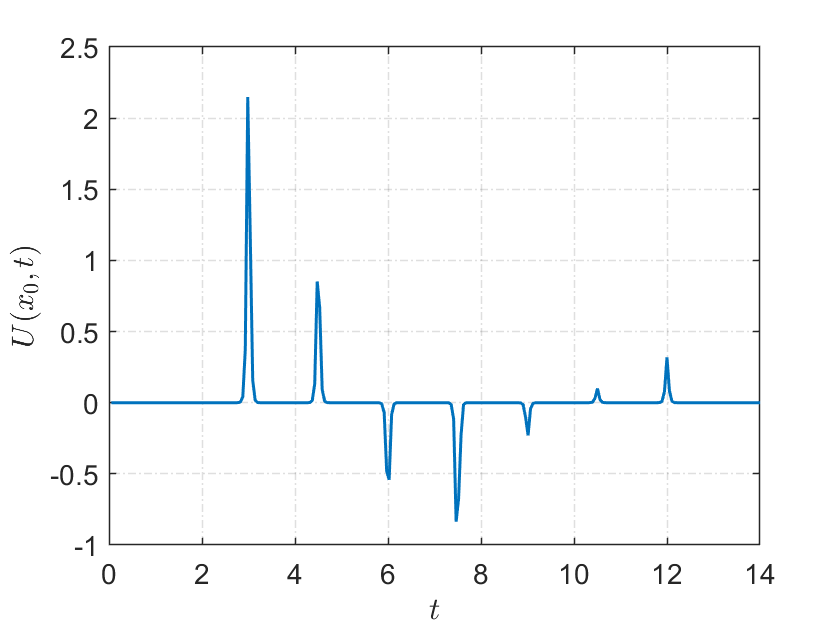} %  0-14
\caption{
Radiating signals $U(x_0,t)$ versus time $t$ at a fixed observation position $x_0$.}\label{fig:a}
\end{figure}

\subsection{Problem reduction in the frequency domain}

Motivated by the separability of wave signals, we reconsider the wave equation \eqref{timeeqn} but with a single excitation instant $t_0$:
\begin{equation}\label{timeeqnsim}
\left\{
\begin{aligned}
&c^{-2}\frac{\partial^2 U}{\partial t^2} = \Delta U + S_0(x-a(t))\; \delta(t-t_0), \quad &&(x, t) \in \mathbb{R}^3 \times \mathbb{R}_+,\\
&U(x,0)=\partial_t U(x,0) = 0, &&x\in \mathbb{R}^3.
\end{aligned}
\right.
\end{equation}
The expression of $U$ takes the form
\begin{equation}
U(x,t)= \int_{|x-y|=t-t_0} \frac{S_0(y-a(t_0))}{4\pi(t-t_0)}\, ds(y),\quad t>0, x\in \R^3.
\end{equation}
Our idea is to investigate the inverse problem in the frequency domain by applying the Fourier transform. {\color{mgq}In this paper the one-dimensional Fourier and inverse Fourier transforms are defined by
\begin{equation*}
(\mathcal{F}f)(k)=\frac{1}{\sqrt{2\pi}}\int_{\R}f(t)e^{-{ i} k t}\,dt,\quad
(\mathcal{F}^{-1}v)(t)=\frac{1}{\sqrt{2\pi}}\int_{\R}v(k)e^{{ i} k t}\,dk,
\end{equation*}
respectively.} 	
The inverse Fourier transform of $S_0(x-a(t))\delta(t-t_0)$ is given by
\begin{equation}\label{sourcef}
f_0(x,\omega) :=\frac{1}{\sqrt{2\pi}}\int_{\mathbb{R}} S_0(x-a(t))\delta(t-t_0) e^{i\omega t}\, dt = \frac{1}{\sqrt{2\pi}} S_0(x-a(t_0)) e^{i\omega t_0}.
\end{equation}
The support of the source function is \supp $f_0=\overline{D_{0}}$. For notational convenience we drop the subscript $j=0$ to rewrite
 $f_0$, $S_0(x-a(t_0))$ and $D_0$ as $f$, $S(x)$ and $D$, respectively.

Taking the inverse Fourier transform on the wave equation \eqref{timeeqnsim} yields the inhomogeneous Helmholtz equation
\begin{equation}\label{eq1}
\Delta u(x,\omega) + \frac{\omega^2}{c^2}  u(x,\omega) = -f(x,\omega), \qquad x\in \R^{3}, \;\omega>0.
\end{equation}
At infinity $u$ should be complemented with 	
the Sommerfeld radiation condition
\be\label{SRC}
\lim\limits_{r \to \infty} r (\partial_r u - i\frac{\omega}{c}u) = 0,\quad r = |x|,\en
which holds uniformly in all directions $x/|x|$.
%Denote by $[\omega_{\min}, \omega_{\max}]$ an interval of frequencies on the positive real axis.
For every $\omega > 0$, the solution $u\in H^2_{loc}(\R^3)$ of the equation \eqref{eq1} is given by
\begin{equation}\label{expression-w}
u(x, \omega) = \int_{{D}} \Phi(x-y;\omega /c) f(y, \omega) dy,  \quad x \in \R^3 .
\end{equation}
Here, $\Phi(x;k)$ is the fundamental solution to the Helmholtz equation $(\Delta + k^2)u = 0$, given by
\begin{equation*}
\Phi(x;k) = \frac{e^{ik|x|}}{4 \pi |x|},\quad x\in \mathbb{R}^3,\, r=|x| \neq 0.
\end{equation*}
The Sommerfeld radiation condition \eqref{SRC}  gives rise to the following asymptotic behavior at infinity:
\begin{equation}\label{far-field}
u(x,\omega)=\frac{e^{{ i} \omega c^{-1}|x|}}{4\pi|x|}\left\{u^\infty(\hat{x},\omega)+O\left(\frac{1}{r}\right)\right\}\quad\mbox{as}\quad|x|\rightarrow\infty,
\end{equation}
where $u^\infty(\cdot, \omega)\in C^\infty(\s^2)$ is known as the far-field pattern (or scattering amplitude) of $u$. It is also well known that the function  $\hat{x}\mapsto u^\infty(\hat{x}, \omega)$ is real analytic on $\s^2$, where $\hat{x}\in \s^2$ is usually referred to as the observation direction.
By \eqref{expression-w}, the far-field pattern $u^\infty$ of $u$ can be expressed as
\be\label{u-infty}
u^\infty(\hat{x}, \omega)=\int_{{D}} e^{-{ i}\omega c^{-1}\hat{x}\cdot y} f(y, \omega)\,dy = \frac{1}{\sqrt{2\pi}}\int_{{D}} e^{-{ i}\omega (c^{-1}\hat{x}\cdot y -t_0)} S(y)\,dy ,\, \hat{x}\in \s^2,\, k>0.
\en
Noting that the function $S$ is real valued, we have $f(x, -\omega)=\overline{f(x,\omega)}$  and thus $u^\infty(\hat{x}, -\omega)=\overline{u^\infty(\hat{x}, \omega)}$ for all $\omega>0$.

The inverse source problem with a single excitation instant in the frequency domain can be rephrased as
\\ \\
(ISP): Determine the pulse moment $t_0>0$ and the position and shape of the support ${D}$ of $S$ from knowledge of the multi-frequency far-field patterns
$$\left\{u^\infty(\hat{x}_m, \omega): \omega\in[\omega_{\min}, \omega_{\max}], \, m=1,2,\cdots, M.\right\}. $$
It deserves to note that $u^\infty(\hat{x}, \omega)$ is just the inverse Fourier transform of the time-dependent far-field pattern $U^\infty(\hat{x}, t)$ of the wave equation \eqref{timeeqn}, given by
\ben
U^\infty(\hat{x}, t)&=&4\pi \lim_{|x|\rightarrow\infty} |x|\,U(x, t+|x|)\\
&=&\int_{\mathbb{R}^3} S_0(y-a(t+\hat{x}\cdot y))\;\delta(t+\hat{x}\cdot y-t_0)\,dy.
\enn

Similarly, we can state the inverse source problem with a finite number of excitation instants as follows.
\\ \\
(ISP'): Determine the pulse moments $\{t_j,j=0,1,2,\cdots,J\}$ and the positions and shapes of the supports ${D_j}$ of $S_j$ from knowledge of the multi-frequency far-field patterns
$$\left\{u_j^\infty(\hat{x}_m, \omega): \omega\in[\omega_{\min}, \omega_{\max}], \, m=1,2,\cdots, M, j=0,1,2,\cdots,J \right\}. $$
Here $u_j^\infty(\hat{x}, \omega)$ denotes the Fourier transform of the far-field pattern corresponding to \eqref{eq1} with $f(x,\omega)=S_j(x-a(t_j)) e^{i\omega t_j}/\sqrt{2\pi}$.

\subsection{Literature review}

The reconstruction of source support and excitation instants in engineering often depends on travel time analysis, which requires precise arrival-time calculations. The present work proposes a Multi-frequency Factorization Method (MFFM), an approach derived from multi-static inverse scattering problems at a fixed freqeuency \cite{K98,KG08}. Previous research has extensively addressed cases where the source function’s time dependence (e.g., excitation instants) is known, providing uniqueness proofs \cite{BLLT, BLT10, CIL, EV09, LY}, increasing stability analyses \cite{BLLT, BLT10, CIL, EV09, LY}, and numerical schemes like iterative, Fourier, and test-function methods for source recovery \cite{BLLT, BLRX, EV09, ZG}. Sampling-type methods for support imaging have also been developed \cite{AHLS, GS, LMZ, JLZ2019}. Within this context, the MFFM was initially conceived in \cite{GS17} to recover the spatial support of a positive definite source. Subsequently, \cite{GGH2022,GHM2023,MGH2023} extended it to handle frequency-dependent sources using frequency intervals of arbitrary length and to track moving point sources with given periods. Building upon \cite{GS17,GGH2022}, this work extends the MFFM framework to incorporate the simultaneous determination of both excitation instants and spatial support, introducing novel ideas to strengthen its conceptual rigorousness.

In terms of moving sources, various inversion algorithms have been proposed for recovering the orbit, profile and magnitude of a moving point source, such as the algebraic method \cite{NIO2012, T2020, Ohe2011}, the time-reversal method \cite{GF2015}, the method of fundamental solutions \cite{CGMS2020}, matched-filter and correlation-based imaging scheme \cite{FGPT2017}, the iterative thresholding scheme \cite{Liu2021} and the Bayesian inference \cite{LGS2021, WKT2022}. See also \cite{LGY21, HKLZ2019, HKZ2020,HLY20,T2022} for uniqueness and stability results on inverse problems of identifying moving sources.
In prior studies \cite{GHM2023,MGH2023}, the author introduced a factorization method for reconstructing the orbit of moving point sources using sparse far-field/near-field data.
To our knowledge, research on moving extended sources is limited. This paper builds upon the ideas presented in \cite{GHM2}, utilizing multi-frequency far-field data from two opposite observation directions. We propose a computational criterion to identify unknown pulse moments. Reconstructed pulse moments and indicator functions are then employed to reconstruct the position and shape of the extended source. Far-field data from sparse observation directions aid in recovering the $\Theta$-convex domain of the support.

The proposed method has the following key features:
(i) Using multi-frequency far-field measurements from two opposing directions, it provides a necessary and sufficient condition for determining the smallest strip (perpendicular to the observation directions) that contains the support of the source. In this sense, it preserves the core principle of the multi-static factorization method.
(ii) Our frequency-domain approach allows for rigorous analysis and efficient inversion using data from frequency intervals of any length. Therefore, it can be particularly effective when high-frequency or low-frequency data are not available.

The main innovations of this work are outlined below.
(i) Joint reconstruction of the source's spatial support and its excitation instants. Formulated in the frequency domain, this problem involves a specific frequency-dependent source and requires indicator functions that incorporate both spatial and temporal variables.
(ii) Validation of pointwise defined test functions.
 Earlier approaches \cite{GS17,GGH2022} relied on test functions integrated over a small spatial domain. Numerical experiments, however, indicate the advantages of pointwise-defined test functions in terms of conciseness and clarity. Building on this observation and guided by asymptotic factorization theory, we provide a rigorous foundation for pointwise test functions and also derive uniqueness theorems.
(iii) Generalized source assumption. We weaken the positivity condition of \cite{GS17,GGH2022} by allowing the source function to vanish on a set of zero measure.
(iv) Broader implications for nonlinear scattering. Although we solve a linear inverse source problem, our approach provides a new framework for tackling nonlinear inverse scattering under the Born or physical optics approximation, which can be reduced to a linear form amenable to our method.

The remaining part is organized as follows. In Section \ref{sec2}, we introduce the multi-frequency far-field operator and connect its range to the data-to-pattern operator under weaken assumptions on the source function. In Section \ref{sec3}, the test functions are designed for the connection between the sampling points and the range of the data-to-pattern operator.
Section \ref{sec4} is devoted to the design of proper indicator functions for characterizing a strip containing the source support and the excitation moment.
The numerical tests will be reported in the final Section \ref{num}.

Below we introduce some notations to be used throughout this paper. A ball centered at $y\in \R^3$ with the radius $\epsilon>0$ will be denoted as $B_\epsilon(y)$. For brevity we write $B_\epsilon=B_\epsilon(0)$ when the ball is centered at the origin. Unless otherwise stated, we always suppose that $D$ is a bounded domain.
Given $\hat{x}\in \s^2$, define
\be \label{xdd}
\hat{x}\cdot D:=\{t\in \R: t=\hat{x}\cdot y\;\mbox{for some}\; y\in D\}\subset \R,
\en
which represents the projection of the domain $D$ along the direction $\hat{x}$.
Hence, $(\inf(\hat{x}\cdot D),  \sup(\hat{x}\cdot D))$ must be a finite and connected interval on the real axis.  %Obviously, $\hat{x}\cdot B_\epsilon(y)=(\hat{x}\cdot y-\epsilon,\hat{x}\cdot y+\epsilon)$.
In the simple case that $\hat{x}=(1,0)$, it holds that (see Figure \ref{F2-D})
$$
\hat{x}\cdot D=\left( \inf_{x\in D} x_1,\; \sup_{x\in D} x_1\right).
$$
\begin{figure}[H]
	\centering
	\scalebox{0.9}{
		\begin{tikzpicture}
			% grating
			%\draw[line width=2.5cm,color=blue!20] (2.5,-3) -- (2.5,3);
		%	\draw[line width=2.5cm,color=blue!20] (-3,-3) -- (-3,3);
		%	\draw[line width=2.5cm,color=green!20] (-0.25,-3) -- (-0.25,3);
		%	\draw (-0.25,2) node [left] {$K_{D}^{(\hat{x})}$};
		%	\draw (3,2) node [left] {${K}_{D,\eta}^{(\hat{x})}$};
		%	\draw (-2.5,2) node [left] {${K}_{D,\eta}^{(-\hat{x})}$};
			\draw[->] (-3,0) -- (3,0) node[above] {$x_1$} coordinate(x axis);
			\draw[->] (0,-3) -- (0,3) node[right] {$x_2$} coordinate(y axis);
		%	\foreach \x/\xtext in {-4,-2, 2, 4}
		%	\draw[xshift=\x cm] (0pt,1pt) -- (0pt,-1pt) node[below] {$\xtext$};
		%	\foreach \y/\ytext in {-4,-2, 2, 4}
		%	\draw[yshift=\y cm] (1pt,0pt) -- (-1pt,0pt) node[left] {$\ytext$};
			
			\draw[domain = -2:360,very thick][samples = 200] plot({cos(\x)+0.65*cos(2* \x)-0.65},{1.5*sin(\x)});
			
			\draw [very thick, densely dotted] (-1.5,-3) -- (-1.5,3);
			\draw (-1.7,0) node [below] {$a$};
			
			\draw [very thick, densely dotted] (1,-3) -- (1,3);
			\draw (1.2,0) node [below] {$b$};
			
		%	\draw [very thick, densely dotted] (1.25,-3) -- (1.25,3);
		%	\draw [very thick, densely dotted] (3.75,-3) -- (3.75,3);
		%	\draw [very thick, densely dotted] (-4.25,-3) -- (-4.25,3);
		%	\draw [very thick, densely dotted] (-1.75,-3) -- (-1.75,3);
			\draw (-0.5,-0.4) node [below] {$D$};
			
			\draw [thick,->] (2.1,1) -- (2.5,1);
			\draw (2.5,1) node[right] {$\hat{x} = (1,0)$};
			
		\end{tikzpicture}
	}
	\caption{Illustration of the interval $\hat{x}\cdot D=(a,b)$ for $\hat{x}=(1,0)$ in two dimensions. }\label{F2-D}
\end{figure}

\section{Range of far-field operator}\label{sec2}
In this section, we will introduce the multi-frequency far-field operator $F$ for a fixed far-field observation direction $\hat{x}\in \s$ and factorize it into the symmetric form $F=LTL^*$. %The operator $L$ will be referred to as the data-to-pattern operator.
Following the ideas of \cite{GS}, we introduce the central frequency  $\kappa$ and  half of the bandwidth of the given data as
\ben
\kappa:=\frac{\omega_{\min}+\omega_{\max}}{2},\quad K :=\frac{\omega_{\max}-\omega_{\min}}{2}.
\enn
Using these parameters, we can define a far-field operator whose domain and range are both the space $L^2(0, K)$. Moreover, our analysis applies to frequency intervals $(\omega_{\min}, \omega_{\max})$ of any length, even when the interval does not include zero.
For every fixed direction $\hat{x}\in \s^2$, we define the far-field operator by
\begin{equation} \label{def:F}
\begin{aligned}
(F\phi)(\tau)=(F_{{D}}^{(\hat{x})}\phi)(\tau)&:=\int_{0}^{K } u^\infty(\hat{x}, \kappa+\tau-s)\,\phi(s)\,ds\\
&=\int_{0}^{K} \int_{{D}} e^{- i(\kappa+\tau-s) (c^{-1}\hat{x}\cdot y -t_0)} \frac{S(y)}{\sqrt{2\pi}}\,dy \,\phi(s)\,ds.
\end{aligned}
\end{equation}
Since $u^\infty(\hat{x},\omega)$ is analytic with respect to the frequency $\omega\in \R$, the operator
$F: L^2(0, K )\rightarrow L^2(0, K )$ is bounded. %We can similarly define the far-field operator $F_j^{(\hat{x})}$ with the far-field pattern $u_j^{\infty}(\hat{x},k)$ corresponding to other pulse moments $t_j$.
%For brevity we write $F = F_0^{(\hat{x})}$ in the discussion that follows.
%By \cite[Theorem 2.1 and Lemma 2.1]{GGH2022}, t
The operator $F$ can be factorized as follows.
\begin{theorem}\label{Fac-F}
We have $F=L\mathcal{T}L^*$, where $L=L_{D}^{(\hat{x})}: {\mgqb H^{-2}({D})}\rightarrow L^2(0, K )$ is defined by
\be\label{def:L}
(Lu)(\tau)=\int_{D} e^{- i \tau (c^{-1}\hat{x}\cdot y -t_0)} u(y)\,dy,\qquad \tau\in [0, K]
\en
for all $u\in {\mgqb H^{-2}({D})}$,
and {\mgqb $\mathcal{T}:  H^2(D)\rightarrow H^{-2}(D)$ is defined as the composition $E\circ M$, where $M: H^2(D)\rightarrow L^{2}(D)$ is the multiplication operator
$$(Mu)(y):=\frac{S(y)}{{\sqrt{2\pi}}} e^{- i \kappa (c^{-1}\hat{x}\cdot y -t_0)}\,u(y), 
$$
and $E:L^2(D)\hookrightarrow H^{-2}(D)$ is the canonical embedding
$$(Eu)(y):=u(y).
$$
Equivalently, for all $u\in  H^{2}({D})$,
\be\label{oT}
(\mathcal{T}u)(y):=E\left(\frac{S(y)}{{\sqrt{2\pi}}} e^{- i \kappa (c^{-1}\hat{x}\cdot y -t_0)}\,u(y)\right). 
\en }
Further, the operator $ {L} : H^{-2}(D) \to L^2(0, K)$ is compact with dense range.
\end{theorem}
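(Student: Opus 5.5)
We verify the factorization by a direct computation, then treat compactness and density of range separately. Throughout, $L$ acts on $H^{-2}(D)$ through the duality pairing with $H^2(D)$. For $u\in H^{-2}(D)$ we read $(Lu)(\tau)$ as $\langle u, e^{-i\tau(c^{-1}\hat x\cdot y-t_0)}\rangle$. The kernel $y\mapsto e^{-i\tau(c^{-1}\hat x\cdot y-t_0)}$ is smooth on $\overline D$, and its $H^2(D)$-norm is bounded uniformly in $\tau\in[0,K]$. Since $L:H^{-2}(D)\to L^2(0,K)$, its adjoint with respect to the $H^{-2}$–$H^2$ duality is $L^*:L^2(0,K)\to H^2(D)$. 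It is given by
\[
(L^*\phi)(y)=\int_0^K e^{i s(c^{-1}\hat x\cdot y-t_0)}\phi(s)\,ds .
\]
This function is smooth in $y$, so it does lie in $H^2(D)$.

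\textbf{Step 1 (factorization).} Apply $M$ to $L^*\phi$, which multiplies it by $S(y)e^{-i\kappa(c^{-1}\hat x\cdot y-t_0)}/\sqrt{2\pi}$. Since $S\in C(\overline D)$ the result lies in $L^2(D)$, and we embed it into $H^{-2}(D)$. Then apply $L$. For $L^2$ functions the pairing is an ordinary integral, so Fubini gives
\[
(L\mathcal T L^*\phi)(\tau)=\int_0^K\int_D e^{-i(\kappa+\tau-s)(c^{-1}\hat x\cdot y-t_0)}\frac{S(y)}{\sqrt{2\pi}}\,dy\,\phi(s)\,ds .
\]
This is exactly \eqref{def:F}, using the far-field representation \eqref{u-infty}. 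The only care needed is to check that the exponents combine correctly, $-\tau-\kappa+s$, and that the sign conventions in the adjoint are consistent.

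\textbf{Step 2 (compactness).} The map $\tau\mapsto g_\tau:=e^{-i\tau(c^{-1}\hat x\cdot y-t_0)}\in H^2(D)$ is continuous, indeed analytic, on $[0,K]$. Hence $L$ is an integral-type operator with a continuous kernel. A clean route is to show that $L$ maps bounded sets of $H^{-2}(D)$ into bounded subsets of $H^1(0,K)$. We have $|(Lu)'(\tau)|=|\langle u,\partial_\tau g_\tau\rangle|\le \|u\|_{H^{-2}}\sup_\tau\|\partial_\tau g_\tau\|_{H^2}$. By Rellich, $H^1(0,K)\hookrightarrow L^2(0,K)$ compactly, so $L$ is compact. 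Alternatively one can note that $L^*$ is Hilbert–Schmidt-like and conclude compactness from that.

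\textbf{Step 3 (dense range).} This is the main point. It suffices to show that $L^*$ is injective, i.e.\ that $L^*\phi=0$ in $H^2(D)$ forces $\phi=0$. Suppose $\int_0^K e^{is(c^{-1}\hat x\cdot y-t_0)}\phi(s)\,ds=0$ for all $y\in D$. As $y$ ranges over the open set $D$, the quantity $\xi=c^{-1}\hat x\cdot y-t_0$ fills a nonempty open interval. So the Fourier transform of $\phi\chi_{(0,K)}$, which is compactly supported, vanishes on an open interval. By Paley–Wiener this transform is entire in $\xi$, so it vanishes identically, and therefore $\phi=0$. Injectivity of $L^*$ gives $\overline{\mathrm{Range}(L)}=\mathrm{Ker}(L^*)^\perp=L^2(0,K)$.

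The obstacle, mild here, is bookkeeping the duality setting: identifying $L^*$ correctly as a map into $H^2(D)$ and justifying the interchange of pairing and $s$-integration. The analytic-continuation argument in Step 3 is standard.
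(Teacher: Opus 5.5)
Your proposal is correct and follows essentially the same route as the paper, which only sketches this result and defers the details to its earlier work: a direct Fubini computation of $L\mathcal{T}L^*\phi$, compactness from $L$ mapping bounded sets into $H^1(0,K)$ and the compact embedding $H^1(0,K)\hookrightarrow L^2(0,K)$, and dense range from injectivity of $L^*$, since $L^*\phi$ is the Fourier transform of the compactly supported $\phi$, vanishes on the interval $\{c^{-1}\hat{x}\cdot y-t_0:\ y\in D\}$, and hence vanishes identically. One cosmetic slip: with the sesquilinear pairing the paper uses, $(Lu)(\tau)=\langle u, e^{i\tau(c^{-1}\hat{x}\cdot y-t_0)}\rangle$ (not $e^{-i\tau(\cdot)}$), and this convention is exactly what yields your correct formula for $L^*$.
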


{\mgqb The integral kernel of the operator $L$ is an exponential function, which is infinitely smooth. Consequently, the integral operator $L$ maps the space $H^{-2}(D)$ into the space $L^2(0,K)$ when the integration is performed over a Lipschitz domain $D$.}
The operator $L=L_{{\color{mgq}D}}^{(\hat{x})}$ will be referred to as the data-to-pattern operator, because it maps a time-dependent source function to the multi-frequency far-field data at a fixed observation direction, i.e. (see \eqref{u-infty}),
\begin{equation*}
u^{\infty}(\hat{x},\omega) = \frac{1}{\sqrt{2\pi}}(LS)(\omega).
\end{equation*}

% \begin{lemma}\label{com}
% The operator $ {L} : L^2({\color{mgq}D_0}) \to L^2(0, K)$ is compact with dense range.
% \end{lemma}

% \begin{proof}
% For any $\psi \in L^2({\color{mgq}D_0})$, it holds that $ {L}\psi \in H^1(0, K)$, which is compactly embedded into $L^2(0, K)$. This proves the compactness of $ {L}$. By \eqref{aj-tildeL},
% $({L}^* \phi)(y)$ coincides with the inverse Fourier transform of $\phi$ at the variable $c^{-1}\hat{x}\cdot y -t_0$. Since the set $\{c^{-1}\hat{x}\cdot y -t_0: y\in {\color{mgq}D_0}\}$ forms an interval of $\R$,
% the relation $({L}^* \phi)(y)=0$ implies $\phi=0$ in $L^2(0, K)$. Hence, $ {L}^*$ is injective. The denseness of $\text{Range} ( L)$ in $L^2(0, K)$ follows from the injectivity of ${L}^*$.
% \end{proof}
In \cite{GGH2022},
the relationship between the ranges of $F$ and $L$ was established under the strong condition that $|S|$ is strictly positive on $D$, i.e., $|S(x)|\geq c>0$ for all $x\in \overline{D}$. In this work we shall relax the positivity condition to $|S|>0$ a.e. on $\overline{D}$. This relaxed condition can be satisfied as long as the nodal set of $S$ has the Lebesgue measure of zero. Define $F_\# := |\mbox{Re} F|+|\mbox{Im}F|$.

\begin{theorem}\label{RH}
It holds that
\begin{equation}\label{RI}
{\range\,(F_{\#}^{1/2})}={\range\,(L)}.
\end{equation}
\end{theorem}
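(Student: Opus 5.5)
We plan to use the abstract range identity from the factorization method (Kirsch–Grinberg, in the form of \cite[Theorem 2.15]{KG08} as extended in \cite{GGH2022}). If $F=L\mathcal{T}L^*$ with $L:X^*\to Y$ compact with dense range (here $X=H^2(D)$, $X^*=H^{-2}(D)$, $Y=L^2(0,K)$), and the real and imaginary parts of $\mathcal{T}$ satisfy a coercivity condition on $\overline{\range(L^*)}$, then $\range(F_\#^{1/2})=\range(L)$. Theorem \ref{Fac-F} already gives the factorization and the compactness and density of $L$. It therefore remains to verify the coercivity hypothesis.

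For the operator $\mathcal T$, the hypothesis we would check is the following. We need $|\langle \mathcal{T}\varphi,\varphi\rangle|\ge C\|\varphi\|^2_{H^2(D)}$, or a positivity of $|\mbox{Re}\,\mathcal T|+|\mbox{Im}\,\mathcal T|$, for $\varphi\in\overline{\range(L^*)}$. Since $L^*\phi(y)=\int_0^K e^{i\tau(c^{-1}\hat x\cdot y-t_0)}\phi(\tau)d\tau$ depends only on $\hat x\cdot y$ and is smooth, every such $\varphi$ lies in $H^2(D)$.

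The natural version here is the "sign" form used in \cite{GGH2022}. Write $\mathcal T=\mathcal T_1+\mathcal T_2$, with $\mathcal T_1$ given by multiplication by $\frac{S}{\sqrt{2\pi}}\cos(\kappa(\cdot))$ and $\mathcal T_2$ by $-\frac{S}{\sqrt{2\pi}}\sin(\kappa(\cdot))$. We then show
\[
\langle |\mbox{Re}\,\mathcal T|\varphi,\varphi\rangle+\langle|\mbox{Im}\,\mathcal T|\varphi,\varphi\rangle\;\ge\;\frac{1}{\sqrt{2\pi}}\int_D|S|\big(|\cos|+|\sin|\big)|\varphi|^2dy\;\ge\;\frac{1}{\sqrt{2\pi}}\int_D|S||\varphi|^2dy .
\]
This quantity is strictly positive for $\varphi\neq0$, because $|S|>0$ a.e. and $\varphi$ is continuous.

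The obstacle is that the abstract theorem needs genuine coercivity. The quantity $\int_D|S||\varphi|^2$ is not bounded below by $C\|\varphi\|^2_{H^2(D)}$, and this is exactly why the relaxed positivity condition needs care. We would therefore not aim for a uniform bound. Instead we would use the variant of the range identity that requires only strict positivity of $F_\#$ together with the structure $F_\#=L\mathcal T_\#L^*$. Here $\mathcal T_\#=|\mbox{Re}\,\mathcal T|+|\mbox{Im}\,\mathcal T|$ is a nonnegative self-adjoint multiplication operator by $W:=\frac{|S|}{\sqrt{2\pi}}(|\cos\kappa(\cdot)|+|\sin\kappa(\cdot)|)$.

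Two facts are needed to make this work. First, since $\mathcal T$ is a multiplication operator with a real-valued symbol times a unimodular phase, its real and imaginary parts are multiplications by real functions. Their absolute values are then multiplications by the absolute values of those functions, and $L|\mbox{Re}\,\mathcal T|L^*=|\mbox{Re}F|$; this commutation is the delicate point. We would check it by using the sign-decomposition $\mathcal T=\mathcal T^+-\mathcal T^-$ as in \cite{GGH2022} and the self-adjointness of $\mbox{Re}F$.

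Second, $F_\#=(LW^{1/2})(LW^{1/2})^*$ implies $\range(F_\#^{1/2})=\range(LW^{1/2})$. We would then show $\range(LW^{1/2})=\range(L)$ on $H^{-2}(D)$. The inclusion "$\subseteq$" is immediate. For "$\supseteq$", it suffices to find $C>0$ with
\[
\|L^*\phi\|_{H^2(D)}\le C\|W^{1/2}L^*\phi\|_{L^2(D)},
\]
which we would derive as follows. The functions $L^*\phi$ are band-limited in the single variable $s=c^{-1}\hat x\cdot y-t_0$. All Sobolev norms of $L^*\phi$ are bounded by $\|\phi\|$. Conversely, by a Paley–Wiener / Logvinenko–Sereda type argument in the one-dimensional variable, the $L^2$ norm on any set of positive measure, weighted by a function positive a.e., controls $\|\phi\|$. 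This last inequality is the key step, proved by contradiction using compactness: a normalized sequence with weighted norm tending to $0$ would converge to an analytic function vanishing a.e. on $D$, hence identically, contradicting normalization. The strict positivity of $W$ a.e. is exactly what is needed here.
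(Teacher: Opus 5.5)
Your plan fails at two steps, and neither can be repaired, because the statement itself is false under the paper's hypotheses.

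\emph{First gap: the absolute value does not pass through $L$.} You need $|\mbox{Re}\,F| = L\,|\mbox{Re}\,\mathcal T|\,L^*$, and the same for $\mbox{Im}$. This is false as soon as the multiplier changes sign, and the relaxed assumption $|S|>0$ a.e.\ allows exactly that. A sign decomposition does not help: for positive $P,Q$ one has $|P-Q|=P+Q$ only if $PQ=0$, and $P_\pm=L\mathcal T^\pm L^*$ are not orthogonal in general. Concretely, take $D=B_1(0)$, $S(y)=y_2$ (real, continuous, $\mbox{supp}\,S=\overline D$, zero only on a plane) and $\hat x=e_1$. Every slice $\{y_1=\mbox{const}\}\cap D$ is symmetric in $y_2$, so $u^\infty(e_1,\omega)=0$ for all $\omega$. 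Hence $F=0$ and $F_\#=0$, while $L|\mbox{Re}\,\mathcal T|L^*\neq 0$. In particular $\mbox{Range}(F_\#^{1/2})=\{0\}\neq\mbox{Range}(L)$.

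The paper's route (excise $Y_\epsilon$, apply Theorem A.1, let $\epsilon\to0$) does not escape this. Excising $Y_\epsilon$ leaves the sign change intact. The asserted bound $\langle\mathcal T_{\epsilon,\#}u,u\rangle\ge c\|u\|^2_{H^2}$ is false, as you suspected: $\mathcal T_\epsilon=E_\epsilon M_\epsilon$ is even compact from $H^2$ to $H^{-2}$. And norm approximation as $\epsilon\to0$ says nothing about membership in non-closed ranges.

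\emph{Second gap: the lower bound is false.} Even where your formula $F_\#=(LW^{1/2})(LW^{1/2})^*$ holds (fixed signs, e.g.\ $S>0$ and $\kappa=0$), the inequality $\|L^*\phi\|_{H^2(D)}\le C\|W^{1/2}L^*\phi\|_{L^2(D)}$ fails.

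\begin{itemize}
\item Your intermediate claim, that the weighted norm controls $\|\phi\|_{L^2(0,K)}$, is impossible. With $\theta(y)=c^{-1}\hat x\cdot y-t_0$, the operator $W^{1/2}L^*$ has kernel $W^{1/2}(y)e^{i\tau\theta(y)}\in L^2(D\times(0,K))$. It is therefore Hilbert--Schmidt and cannot be bounded below.
\item Your compactness argument breaks because a normalized sequence may converge weakly to $0$. Logvinenko--Sereda needs a relatively dense subset of $\R$, not the bounded interval $\theta(D)$.
\item The final inequality is also false. It would give $\mbox{Range}(L)\subseteq\mbox{Range}(LW^{1/2})$. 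But $L\delta_{y^*}=e^{-i\tau\theta(y^*)}$ for $y^*\in D$. Every element of $\mbox{Range}(LW^{1/2})$ is the restriction to $(0,K)$ of $\tau\mapsto\int_{\R}e^{-i\tau s}h(s)\,ds$ with $h\in L^1$ compactly supported. That is an entire function decaying along $\R$ by Riemann--Lebesgue, so it cannot equal a unimodular exponential on $(0,K)$.
\end{itemize}

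So with $L$ defined on $H^{-2}(D)$ the identity fails even for $S\equiv1$. What your argument actually proves, for fixed signs and $W\ge c>0$, is $\mbox{Range}(F_\#^{1/2})=L(L^2(D))$, and this space contains none of the pointwise test functions.

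Moreover, a.e.\ positivity of $W$ does not suffice even for that version. Take $S(y)=|y_1|$, $\hat x=e_1$, $\kappa=0$. Then $L1\in\mbox{Range}(L)$, but $L1\in\mbox{Range}(F_\#^{1/2})$ would require $L1=L(S^{1/2}g)$. Comparing the two sides slice by slice and applying Cauchy--Schwarz forces $\|g\|^2_{L^2(D)}\ge\int_{-1}^{1}\pi(1-t^2)|t|^{-1}\,dt=\infty$.
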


\begin{proof}
We introduce the nodal set of $S$ and its $\epsilon$-neighborhood by
$$Y:= \{x \in {\overline{D}:}\quad S(x)=0\}, \quad Y_\epsilon:= \bigcup\limits_{y\in Y}(B_\epsilon(y)\cap \overline{D}),$$
 where $\epsilon>0$ is sufficiently small. There holds $D\backslash \overline{Y}_\epsilon \neq \emptyset$ due to the Assumption \ref{sump} on $S$, and meas $\left(\overline{Y}_\epsilon\right) \to 0$ as $\epsilon \to 0$.
Now we define the operator $\mathcal{F}_\epsilon :L^2(0, K )\rightarrow L^2(0, K )$ in the same way as $F$ but over the domain $D\backslash \overline{Y}_\epsilon$ by
\begin{equation} \label{def:Feps}
(\mathcal{F}_\epsilon \phi)(\tau)= \int_{0}^{K} \left[\int_{D\backslash \overline{Y}_\epsilon} e^{- i(\kappa+\tau-s) (c^{-1}\hat{x}\cdot y -t_0)} \frac{S(y)}{\sqrt{2\pi}}\,dy\right] \phi(s)\,ds.
\end{equation}
Analogous to Theorem \ref{Fac-F}, we have  $\mathcal{F}_\epsilon=\mathcal{L}_\epsilon\mathcal{T}_\epsilon\mathcal{L}_\epsilon^*$, where {\mgqb $\mathcal{L}_\epsilon: H^{-2}(D\backslash \overline{Y}_\epsilon)\rightarrow L^2(0, K )$} is defined by
\be\label{def:Leps}
(\mathcal{L}_\epsilon u)(\tau)=\int_{D\backslash \overline{Y}_\epsilon} e^{- i \tau (c^{-1}\hat{x}\cdot y -t)} u(y)\,dy,\qquad \tau\in (0, K)
\en
for all {\mgqb $u\in H^{-2}(D\backslash \overline{Y}_\epsilon)$,
and $\mathcal{T}_\epsilon : H^{2}(D\backslash \overline{Y}_\epsilon)\rightarrow H^{-2}(D\backslash \overline{Y}_\epsilon)$ is defined as the composition $E_\epsilon \circ M_\epsilon$, where $M_\epsilon: H^2(D\backslash \overline{Y}_\epsilon)\rightarrow L^{2}(D\backslash \overline{Y}_\epsilon)$ is the multiplication operator
$$(M_\epsilon u)(y):=\frac{S(y)}{{\sqrt{2\pi}}} e^{- i \kappa (c^{-1}\hat{x}\cdot y -t_0)}\,u(y), 
$$
and $E_\epsilon:L^2(D\backslash \overline{Y}_\epsilon)\hookrightarrow H^{-2}(D\backslash \overline{Y}_\epsilon)$ is the canonical embedding
$$(E_\epsilon u)(y):=u(y).
$$ 
Equivalently, for all $u\in  H^{2}({D\backslash \overline{Y}_\epsilon})$,
\be\label{oTeps}
(\mathcal{T}_\epsilon u)(y):=E_\epsilon \left(\frac{S(y)}{{\sqrt{2\pi}}} e^{- i \kappa (c^{-1}\hat{x}\cdot y -t_0)}\,u(y)\right) .
\en }
For all $v\in H^{2}(D\backslash \overline{Y}_\epsilon)$, 
$$\langle \mathcal{T}_\epsilon u, v \rangle= \frac{1}{\sqrt{2\pi}}\int_{D\backslash \overline{Y}_\epsilon} S(y)e^{- i \kappa (c^{-1}\hat{x}\cdot y -t_0)}\,u(y)\bar{v}(y)\,dy=\langle  u, \mathcal{T}^*_\epsilon v \rangle
$$
where $\big\langle \cdot, \cdot\big\rangle$ represents the dual pairing between $H^{-2}({D\backslash \overline{Y}_\epsilon})$ and $H^{2}({D\backslash \overline{Y}_\epsilon})$ and $\mathcal{T}^*_\epsilon v=E_\epsilon \left(\frac{S(y)}{{\sqrt{2\pi}}} e^{ i \kappa (c^{-1}\hat{x}\cdot y -t_0)}\,v(y)\right): H^{2}(D\backslash \overline{Y}_\epsilon)\rightarrow H^{-2}(D\backslash \overline{Y}_\epsilon)$ is the adjoint operator of $\mathcal{T}_\epsilon$. Hence, 
$$[(\real \mathcal{T}_\epsilon)u](y)=\left(\frac{\mathcal{T}_\epsilon+\mathcal{T}^*_\epsilon}{2}u\right)(y)=E_\epsilon \left(\frac{1}{\sqrt{2\pi}}S(y)\cos (\kappa (c^{-1}\hat{x}\cdot y -t_0))u(y)\right).
$$
Similarly, 
$$[(\ima \mathcal{T}_\epsilon)u](y)=E_\epsilon \left(-\frac{1}{\sqrt{2\pi}}S(y)\sin (\kappa (c^{-1}\hat{x}\cdot y -t_0))u(y)\right).
$$
{\mgqb For any $\epsilon>0$, since $|S(x)| \geq c_\epsilon >0$ for all $x \in \overline{D} \backslash \overline{Y}_\epsilon$, it follows that the operator $\real \mathcal{T}_\epsilon$ is injective from $H^{2}({D\backslash \overline{Y}_\epsilon})$ into $H^{-2}({D\backslash \overline{Y}_\epsilon})$. Moreover, for $\kappa \neq 0$, the operator $\ima \mathcal{T}_\epsilon$ is also injective from $H^{2}({D\backslash \overline{Y}_\epsilon})$ into $H^{-2}({D\backslash \overline{Y}_\epsilon})$. 
Consequently, the operator $\mathcal{T}_{\epsilon,\#}$ is coercive, because the same lower bound $|S(x)| \geq c_\epsilon >0$ for all $x \in \overline{D} \backslash \overline{Y}_\epsilon$ implies the existence of a constant $C>0$ such that 
$$\big\langle \mathcal{T}_{\epsilon,\#}\, u, u\big\rangle\geq c\,||u||_{H^{2}({D\backslash \overline{Y}_\epsilon})}^2\quad\mbox{for all}\quad u\in H^{2}({D\backslash \overline{Y}_\epsilon}),$$
}
%Theorem \ref{Fac-Feps} implies that the far-field operator $\mathcal{F}_\epsilon$ is self-adjoint.
Using the range identity of Theorem A. 1. in the Appendix, {\mgqb that the space $X=H^{2}(D\backslash \overline{Y}_\epsilon), X^*=H^{-2}(D\backslash \overline{Y}_\epsilon)$ and Hilbert space $Y=L^2(0,K)$, }we obtain the relation
\be\label{RIeps}
\mbox{Range}\, (\mathcal{F}_{\epsilon,\#}^{1/2})=\mbox{Range}\,(\mathcal{L}_\epsilon).
\en
{\mgqb 
By the definitions \eqref{def:F} and \eqref{def:Feps}, we obtain $\range \, (\mathcal{F}_\epsilon) \subset \range \, (F)$. Therefore, $\lim\limits_{\epsilon \to 0} \range\, (\mathcal{F}_{\epsilon})\subseteq \range\,(F)$.
Given $h \in \mbox{ Range }(F)$, there exists {\mgqb $\phi \in L^{2}(0,K)$} such that $h=F\phi$. Define $h_\epsilon :=\mathcal{F}_\epsilon(\phi)$. We have
\begin{equation*}
\begin{aligned}
\lim\limits_{\epsilon \to 0}||h- h_\epsilon||^2_{L^2(0,K)} &= \lim\limits_{\epsilon \to 0} \int_{0}^{K} \left| \int_{0}^{K}\int_{D \cap Y_\epsilon} e^{- i(\kappa+\tau-s) (c^{-1}\hat{x}\cdot y -t)} \frac{S(y)}{\sqrt{2\pi}}\,dy\,\phi(s)ds\right|^2 d\tau  \\
& \leq \lim\limits_{\epsilon \to 0} \frac{K}{2\pi} \int_{0}^{K} |\phi(s)|^2\,ds \int_{Y_\epsilon} \left|  S(y) \right|^2\,dy  \\
& \leq  \lim\limits_{\epsilon \to 0}CK||\phi||^2_{L^2(0,K)}|Y_\epsilon|=0.
\end{aligned}
\end{equation*}
It shows that $\range\,(F) \subset {\lim\limits_{\epsilon \to 0}  \range\, (\mathcal{F}_{\epsilon})} $.
Hence, we have
\begin{equation}\label{RIf}
{\lim\limits_{\epsilon \to 0}\range\, (\mathcal{F}_{\epsilon,\#}^{1/2})}={\range\,(F_{\#}^{1/2})}.
\end{equation}

}

{\mgqb Next, we prove the convergence of $\range (\mathcal{L}_{\epsilon})$ and $\range(L)$.}
By the definitions \eqref{def:L} and \eqref{def:Leps}, we can easily obtain Range ($\mathcal{L}_\epsilon$) $\subseteq$ Range ($L$). Therefore, $\lim\limits_{\epsilon \to 0}\mbox{Range}\, (\mathcal{L}_{\epsilon})\subseteq \mbox{Range}\,(L)$.
Given $g \in \mbox{ Range }(L)$, there exists {\mgqb $\psi \in H^{-2}(D)$} such that $g=L\psi$. Define $g_\epsilon :=\mathcal{L}_\epsilon(\psi|_{D\backslash \overline{Y}_\epsilon})$. We have
\begin{equation*}
\begin{aligned}
\lim\limits_{\epsilon \to 0} ||g-g_\epsilon||^2_{L^2(0,K)} &= \lim\limits_{\epsilon \to 0} \int_{0}^{K}\left|  \int_{D \cap Y_\epsilon} e^{- i \tau (c^{-1}\hat{x}\cdot y -t)} \psi(y)\,dy\,\right|^2 d\tau \\
&\leq \lim\limits_{\epsilon \to 0} CK{\mgqb ||\psi||^2_{H^{-2}(D)}}\,|Y_\epsilon| = 0.
\end{aligned}
\end{equation*}
It shows that $\mbox{Range}\,(L) \subset {\lim\limits_{\epsilon \to 0}\mbox{Range}\, (\mathcal{L}_{\epsilon})} $.
Hence, we have
\begin{equation}\label{RIl}
{\lim\limits_{\epsilon \to 0}\mbox{Range}\, (\mathcal{L}_{\epsilon})}={\mbox{Range}\,(L)}.
\end{equation}

Combining \eqref{RIeps}, \eqref{RIf} and \eqref{RIl}, we obtain the range identity
$
{\mbox{Range}\,(F_{\#}^{1/2})}={\mbox{Range}\,(L)}.
$
\end{proof}

According to the proof of \cite[Lemma 3.1]{GGH2022}, we can estimate 
the support of the inverse Fourier transform of $Lu$ as follows. 
\begin{lemma}\label{supplu}For $u\in {\mgqb H^{-2}({D})}$, it holds that
\begin{equation}\label{rl-f}
{\rm supp}\,(\mathcal{F}^{-1}Lu) \subset \left(c^{-1}\inf(\hat{x}\cdot {D})-t_0,\, c^{-1}\sup(\hat{x}\cdot {D})-t_0\right).
\end{equation}
\end{lemma}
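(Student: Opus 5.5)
The idea is to read $Lu$ as a Fourier transform and then push the support of $u$ through the linear phase $y\mapsto c^{-1}\hat{x}\cdot y-t_0$. The expression \eqref{def:L} makes sense for all $\tau\in\R$, not only on $[0,K]$. So we regard $Lu$ as the entire function
$$(Lu)(\tau)=\int_D e^{-i\tau(c^{-1}\hat{x}\cdot y-t_0)}u(y)\,dy,\qquad \tau\in\R,$$
interpreted as the duality pairing of $u\in H^{-2}(D)$ with the smooth function $y\mapsto e^{-i\tau(c^{-1}\hat{x}\cdot y-t_0)}$. This is the extension implicit in the statement, as in \cite[Lemma 3.1]{GGH2022}. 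We then show that $Lu$ is, up to the factor $\sqrt{2\pi}$, the Fourier transform of a compactly supported distribution $\mu$ on $\R$. This is the pushforward of $u$ (extended by zero outside $D$) under the map $P:y\mapsto s=c^{-1}\hat{x}\cdot y-t_0$.

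\textbf{Step 1: define the pushforward.} For $\varphi\in C_0^\infty(\R)$ set $\langle\mu,\varphi\rangle:=\langle u,\varphi\circ P\rangle$. This is well defined and continuous because $\varphi\circ P$ is smooth on $\overline{D}$ with $H^2$ norms controlled by finitely many derivatives of $\varphi$.

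\textbf{Step 2: locate the support.} If $\varphi$ vanishes on $P(\overline D)=[c^{-1}\inf(\hat{x}\cdot D)-t_0,\,c^{-1}\sup(\hat{x}\cdot D)-t_0]$, then $\varphi\circ P$ vanishes on $\overline D$, so $\langle\mu,\varphi\rangle=0$. Hence $\mu$ is supported in that interval.

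\textbf{Step 3: identify $Lu$ with $\widehat\mu$.} Apply $\mu$ to $s\mapsto e^{-i\tau s}$. This gives $(Lu)(\tau)=\langle\mu,e^{-i\tau\cdot}\rangle=\sqrt{2\pi}\,(\mathcal{F}\mu)(\tau)$, so $\mathcal{F}^{-1}(Lu)=\sqrt{2\pi}\,\mu$ in the distributional sense. The support claim then follows from Step 2.

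For $u\in L^2(D)$ everything is explicit. By Fubini/coarea, $\mu(s)=c\int_{\{y\in D:\ \hat{x}\cdot y=c(s+t_0)\}}u\,dS$, which is an $L^1$ function vanishing outside the projected interval. The $H^{-2}$ case then follows by density of $L^2(D)$ in $H^{-2}(D)$ together with continuity of $u\mapsto\mu$ into distributions.

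\textbf{Main obstacle.} The hard part is justifying the passage from $H^{-2}(D)$ to a distribution on $\R$, i.e.\ deciding which dual is meant, $\widetilde H^{-2}$ or the dual of $H^2_0$. I would treat $u$ as a functional on $H^2(D)$, which requires no extension issues, so that it can act on restrictions of smooth functions.

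A second issue is the open versus closed interval in \eqref{rl-f}. The argument naturally yields the closed interval $P(\overline D)$. I would note that support inclusions of this kind are understood up to the boundary endpoints, or equivalently that the interval in \eqref{rl-f} is meant in the sense of the convex hull of $P(\overline D)$, consistent with \cite{GGH2022}.
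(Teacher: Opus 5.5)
Your argument is correct and follows essentially the same route as the paper. The paper gives no proof of its own and defers to the proof of \cite[Lemma 3.1]{GGH2022}, where $Lu$ is rewritten via Fubini and the change of variables $s=c^{-1}\hat{x}\cdot y-t_0$ as the Fourier transform of the source integrated over the hyperplanes $\hat{x}\cdot y=\mathrm{const}$. Your pushforward $\mu$, acting on the extension of $Lu$ to all $\tau\in\R$, is exactly the duality version of that computation for $u\in H^{-2}(D)=(H^2(D))^*$. Your endpoint caveat is a genuine defect of the stated open interval, not a weakness of your proof. Take $u=\delta_{y^*}\in(H^2(D))^*$ with $y^*\in\partial D$ maximizing $\hat{x}\cdot y$ over $\overline{D}$. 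Then $\mathcal{F}^{-1}(Lu)=\sqrt{2\pi}\,\delta_{c^{-1}\hat{x}\cdot y^*-t_0}$, which sits at the right endpoint. So only the closed interval can hold.
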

Lemma \ref{supplu} will be used in the proof of Theorem \ref{lem3.4} below.
Next we sketch the idea how to extract information on the source support and radiating moment.
Let $\chi_{y,\eta}\in L^2(0, K )$ be a two-parameter-dependent test function with $y\in \R^3$, $\eta\in \R$ and fix the observation direction at $\hat{x}\in \s^2$.
Denote by $(\lambda_n^{(\hat{x})}, \psi_n^{(\hat{x})})$ an eigensystem of the non-negative and self-adjoint operator $ F_{\#}$, which is uniquely determined by the multi-frequency far-field patterns $\{u^\infty(\hat{x}, \omega): \omega \in [\omega_{\min}, \omega_{\max}]\}$. Applying Picard's theorem (Theorem A. 2) and Theorem \ref{RH}, we obtain
\be%\label{indicator}
\chi_{y,\eta} \in {\mgqa {\range\, (L)}}\quad\mbox{if and only if}\quad \sum_{n=1}^\infty\frac{|\langle \chi_{y,\eta}, \psi_n^{(\hat{x})} \rangle|^2}{ |\lambda_n^{(\hat{x})}|}<\infty.
\en
In the subsequent Section \ref{sec3}, we shall choose proper test functions $\chi_{y,\eta}$ such that $\chi_{y,\eta} \in {\mbox{Range}(L)}$ if and only if the temporal sampling variable $\eta$ coincides with the excitation instant $t_0$ and
the spatial sampling variable $z$ belongs to a closed strip related both to the support $D$ and the observation direction $\hat{x}$. Moreover, the instant $t_0$ and the narrowest strip containing the support $D$ can be both recovered from the multi-frequency data of any two opposite directions.

\section{Test functions}\label{sec3}

For any $y\in \R^3$ and $\eta \in \R$, introduce the pointwise defined test functions $\chi_{y,\eta}:=\phi^{(\hat{x})}_{y, \eta}\in L^2(0, K )$  by
\be\label{Test}
\phi^{(\hat{x})}_{y,\eta}(k):= e^{-i k\,  (c^{-1}\hat{x}\cdot y -\eta)},\qquad k\in {\mgq (0, K)}.
\en
Here the point $y\in \R^3$ is the sampling variable in space and $\eta\in \R$ the sampling variable in time. Define the unbounded and parallel strips (see Figure \ref{strip}):
{\mgqa
\be\label{K}
K_{D}^{(\hat{x})}&:=&\{y\in \R^3: \inf(\hat{x}\cdot D) {\,\mgqb <\,} \hat{x}\cdot y {\,\mgqb <\,} \sup (\hat{x}\cdot D) \},%\subset \R^3,
\\ \label{tildeK}
{K}_{D,\eta}^{(\hat{x})}&:=&\{y\in \R^3: \inf(\hat{x}\cdot D)- ct_0 + c\eta {\,\mgqb <\,} \hat{x}\cdot y {\,\mgqb <\,} \sup (\hat{x}\cdot D) -ct_0 +c\eta \},%\subset \R^3,
\en }
whose directions are perpendicular to the observation direction $\hat{x}$.
The region $K_{D}^{(\hat{x})}\subset \R^3$ represents the smallest closed strip containing $D$ and perpendicular to the vector $\hat{x}\in\s^2$, whereas $K_{D, \eta}^{(\hat{x})}\subset \R^3$ is a shift of $K_{D}^{(\hat{x})}$ along the direction $\hat{x}$.
By the definition \eqref{tildeK} and the relation
\begin{equation*}
\inf(-\hat{x}\cdot D) = -\sup (\hat{x}\cdot D),\quad \sup (-\hat{x}\cdot D)= -\inf(\hat{x}\cdot D),
\end{equation*}
it is obvious that
\be\label{K-}
K_{D,\eta}^{(-\hat{x})}:=\{y\in \R^3: \inf(\hat{x}\cdot D)+ ct_0 - c\eta {\,\mgqb <\,} \hat{x}\cdot y {\,\mgqb <\,} \sup (\hat{x}\cdot D) +ct_0 -c\eta \}\subset \R^3.
\en
Moreover, we have
\ben
K_{D}^{(\hat{x})}=
K_{D,\eta}^{(\hat{x})}=K_{D,\eta}^{(-\hat{x})}\quad\mbox{if and only if}\quad \eta=t_0.
\enn

\begin{figure}[H]
\centering
\scalebox{0.9}{
\begin{tikzpicture}
		% grating
		\draw[line width=2.5cm,color=blue!20] (2.5,-3) -- (2.5,3);
		\draw[line width=2.5cm,color=blue!20] (-3,-3) -- (-3,3);
		\draw[line width=2.5cm,color=green!20] (-0.25,-3) -- (-0.25,3);
		\draw (-0.25,2) node [left] {$K_{D}^{(\hat{x})}$};
		\draw (3,2) node [left] {${K}_{D,\eta}^{(\hat{x})}$};
		\draw (-2.5,2) node [left] {${K}_{D,\eta}^{(-\hat{x})}$};
		\draw[->] (-6,0) -- (6,0) node[above] {$x_1$} coordinate(x axis);
		\draw[->] (0,-4) -- (0,4) node[right] {$x_2$} coordinate(y axis);
		\foreach \x/\xtext in {-4,-2, 2, 4}
		\draw[xshift=\x cm] (0pt,1pt) -- (0pt,-1pt) node[below] {$\xtext$};
		\foreach \y/\ytext in {-4,-2, 2, 4}
		\draw[yshift=\y cm] (1pt,0pt) -- (-1pt,0pt) node[left] {$\ytext$};

		\draw[domain = -2:360,very thick][samples = 200] plot({cos(\x)+0.65*cos(2* \x)-0.65},{1.5*sin(\x)});

		\draw [very thick, densely dotted] (-1.5,-3) -- (-1.5,3);
		\draw [very thick, densely dotted] (1,-3) -- (1,3);
		\draw [very thick, densely dotted] (1.25,-3) -- (1.25,3);
		\draw [very thick, densely dotted] (3.75,-3) -- (3.75,3);
		\draw [very thick, densely dotted] (-4.25,-3) -- (-4.25,3);
		\draw [very thick, densely dotted] (-1.75,-3) -- (-1.75,3);
		\draw (-0.5,-0.4) node [below] {$D$};

		\draw [thick,->] (4.1,1) -- (4.5,1);
		\draw (4.5,1) node[right] {$\hat{x} = (1,0)$};

		\end{tikzpicture}
		}
		\caption{Illustration of the strips $K_{D}^{(\hat{x})}$ (green area), ${K}_{D,\eta}^{(-\hat{x})}$ and ${K}_{D,\eta}^{(\hat{x})}$ (blue area) with {$\eta = 2.75$ and} $\hat{x} = (1,0)$ in the $Ox_1x_2$-plane.  The strip $K_{D, \eta}^{(\hat{x})}$ lies on the right hand side of  $K_{D}^{(\hat{x})}$ if $\eta>t_0$ and on the left hand side if   $\eta<t_0$.}
		\label{strip}
		\end{figure}
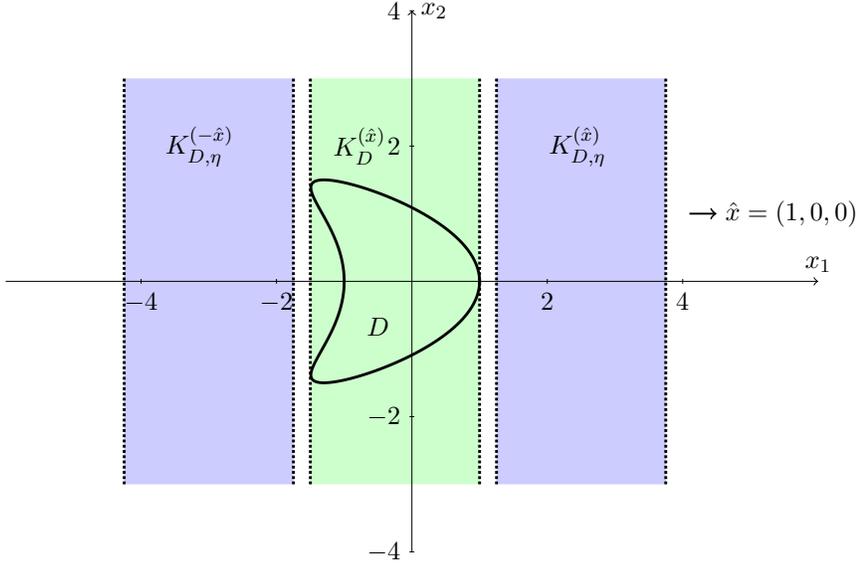

The main result of this section is stated as follows.
\begin{theorem}\label{lem3.4}
For any $\eta \in \R$, we have $y \in {K}_{D,\eta}^{(\hat{x})}$ if and only if $\phi^{(\hat{x})}_{y,\eta} \in  {\range (L)}$.
\end{theorem}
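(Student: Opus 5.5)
The plan is to reduce everything to a one-dimensional Fourier-analytic statement along the direction $\hat{x}$. Put $s_0:=c^{-1}\hat{x}\cdot y-\eta$ and
$$I:=\left(c^{-1}\inf(\hat{x}\cdot D)-t_0,\; c^{-1}\sup(\hat{x}\cdot D)-t_0\right).$$
Dividing the defining inequalities of ${K}_{D,\eta}^{(\hat{x})}$ in \eqref{tildeK} by $c$ shows that
$$y\in {K}_{D,\eta}^{(\hat{x})}\iff s_0\in I .$$
With this notation the test function is $\phi^{(\hat{x})}_{y,\eta}(\tau)=e^{-i\tau s_0}$. For any $u$, the function $(Lu)(\tau)$ is the Fourier-type transform of the push-forward $g_u$ of $u$ under the map $z\mapsto c^{-1}\hat{x}\cdot z-t_0$, that is, $(Lu)(\tau)=\int_{\R} e^{-i\tau s}\,dg_u(s)$. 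The theorem therefore says that the point mass at $s_0$ can be realized as such a push-forward exactly when $s_0\in I$.

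\emph{Sufficiency.} Let $s_0\in I$. Because $I$ is open and is the image of $D$, the plane $\Pi:=\{z: c^{-1}\hat{x}\cdot z-t_0=s_0\}$ meets the open set $D$. I would pick a small flat disc $\Gamma\subset\Pi\cap D$ with $\overline{\Gamma}\subset D$ and a smooth density $\rho\ge0$ compactly supported in $\Gamma$ with $\int_\Gamma\rho\,ds=1$. Then I define the single-layer distribution $u:=\rho\,\delta_\Gamma$, i.e.\ $\langle u,v\rangle=\int_\Gamma\rho v\,ds$. By the trace theorem, $|\int_\Gamma\rho v\,ds|\le C\|v\|_{H^1(D)}\le C\|v\|_{H^2(D)}$, so $u\in H^{-2}(D)$ and it is compactly supported in $D$. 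On $\Gamma$ the kernel of \eqref{def:L} is identically equal to $e^{-i\tau s_0}$. Hence
$$(Lu)(\tau)=e^{-i\tau s_0}\int_\Gamma\rho\,ds=\phi^{(\hat{x})}_{y,\eta}(\tau),$$
which gives $\phi^{(\hat{x})}_{y,\eta}\in\range(L)$.

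\emph{Necessity.} Suppose $\phi^{(\hat{x})}_{y,\eta}=Lu$ on $(0,K)$ for some $u\in H^{-2}(D)$. Since $u$ pairs with a kernel that is smooth and entire in $\tau$, and $D$ is bounded, the formula \eqref{def:L} extends $Lu$ to an entire function of $\tau\in\mathbb{C}$ of exponential type. The function $e^{-i\tau s_0}$ is entire as well. Two entire functions that agree on the interval $(0,K)$ agree everywhere, so $Lu=e^{-i\tau s_0}$ on all of $\R$, as tempered distributions. Applying $\mathcal{F}^{-1}$ with the paper's convention gives $\mathcal{F}^{-1}(e^{-i\tau s_0})=\sqrt{2\pi}\,\delta(\cdot-s_0)$, whose support is exactly $\{s_0\}$. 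By Lemma~\ref{supplu}, $\supp\,\mathcal{F}^{-1}(Lu)\subset I$, and therefore $s_0\in I$, i.e.\ $y\in K_{D,\eta}^{(\hat{x})}$.

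The main obstacle is the endpoint case, where $s_0$ equals one of the endpoints of $I$. A naive support argument only places $\supp g_u$ in the closed interval $\overline{I}$, and $H^{-2}(D)$ contains distributions concentrated on $\partial D$, including on supporting planes that touch $\partial D$. I would handle this by invoking Lemma~\ref{supplu} exactly as stated, with the open interval, as the earlier result provides. If that proves insufficient, I would interpret $H^{-2}(D)$ as the dual of $H^2_0(D)$ (or as distributions supported in $D$ whose push-forward cannot carry a Dirac mass at an extreme value of $\hat{x}\cdot z$), and argue that a point mass at an endpoint would force $u$ to charge the set $\partial D\cap\{\hat{x}\cdot z=\sup(\hat{x}\cdot D)\}$. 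The remaining steps are the analytic continuation and the trace estimate, both of which are routine.
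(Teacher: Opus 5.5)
Your proof has the same structure as the paper's. Sufficiency is by exhibiting a preimage, and necessity combines Lemma~\ref{supplu} with $\mathcal{F}^{-1}(e^{-i\tau s_0})=\sqrt{2\pi}\,\delta(\cdot-s_0)$. Your two deviations are harmless and somewhat more careful:
\begin{itemize}
\item The paper uses the point mass $\delta(z-y^*)$, $y^*\in D$, which lies in $H^{-2}(D)$ because $H^2(D)\hookrightarrow C(\overline{D})$ in three dimensions. Your single layer $\rho\,\delta_\Gamma$ needs only the trace theorem.
\item Your analytic continuation from $(0,K)$ to $\R$ justifies a step the paper skips when it applies $\mathcal{F}^{-1}$ to functions given only on $(0,K)$.
\end{itemize}

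Your endpoint worry is well founded, though. It is a genuine gap, shared with the paper, and Lemma~\ref{supplu} does not settle it: on the boundary planes of the open strip the statement is false.

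The paper's setting requires $H^{-2}(D)=(H^2(D))^*$. This is the Gelfand triple of Theorem A.1, and it is also needed for $Lu$ to be well defined: the kernel has modulus one on $\partial D$, so it is not in $H^2_0(D)$.

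Now take $z^*\in\partial D$ with $\hat{x}\cdot z^*=\sup(\hat{x}\cdot D)$. Then $\delta_{z^*}\in H^{-2}(D)$. Moreover $L\delta_{z^*}=\phi^{(\hat{x})}_{y,\eta}$ for every $y$ with $\hat{x}\cdot y=\sup(\hat{x}\cdot D)-ct_0+c\eta$, yet such $y\notin K^{(\hat{x})}_{D,\eta}$. So Lemma~\ref{supplu} with the open interval is wrong. It already fails for $u\equiv1$, whose push-forward is the cross-sectional area function, with closed support $\overline{I}$. Invoking it ``exactly as stated'' therefore imports a false step.

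Your fallbacks do not repair this:
\begin{itemize}
\item With $(H^2_0(D))^*$, the pairing defining $L$ is ambiguous.
\item That an endpoint mass forces $u$ to charge $\partial D\cap\{\hat{x}\cdot z=\sup(\hat{x}\cdot D)\}$ gives no contradiction, because $(H^2(D))^*$ contains exactly such measures.
\end{itemize}

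The true statement, which your argument does prove, uses the closed strip $\overline{K^{(\hat{x})}_{D,\eta}}$. Necessity then follows from $\supp\,\mathcal{F}^{-1}(Lu)\subset\overline{I}$. This is immediate: $\langle u,v\rangle$ depends only on $v|_{D}$, so the push-forward of $u$ annihilates test functions supported away from $\overline{I}$. Sufficiency at the two endpoints is supplied by $\delta_{z^*}$ and its analogue at $\inf(\hat{x}\cdot D)$.
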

{\mgqb 
\begin{proof} (i)
Assume $y\in {K}_{D,\eta}^{(\hat{x})}$, that is, $\inf(\hat{x}\cdot D) < \hat{x}\cdot y+ct_0-c\eta  < \sup(\hat{x}\cdot D)$. There exists $\epsilon>0$ and $y^\ast\in D$ such that $\hat{x}\cdot y = \hat{x}\cdot y^\ast -ct_0 +c\eta$ and 
$$
\phi^{(\hat{x})}_{y,\eta}(k):=e^{-ik[c^{-1}(\hat{x}\cdot y^* -ct_0+c\eta)-\eta]} = e^{-ik(c^{-1}\hat{x}\cdot y^* -t_0)}=\int_{D}e^{-i k\,  (c^{-1}\hat{x}\cdot z -t_0)}\delta(z-y^*)dz =L\delta(z-y^*).
$$
Since $\delta(z-y^*) \in H^{-2}(D)$, we have $\phi^{(\hat{x})}_{y,\eta} \in \range L.$

(ii) Assume $\phi^{(\hat{x})}_{y,\eta}\in {\range\, (L)}$. Suppose $\phi^{(\hat{x})}_{y,\eta}=Lv, \, v\in H^{-2}(D).$ From Lemma \ref{supplu}, the support of $\mathcal{F}^{-1} (Lv)$ satisfies  $\mbox{supp } \mathcal{F}^{-1} (Lv)\subset \left(c^{-1}\inf(\hat{x}\cdot {D})-t_0,\, c^{-1}\sup(\hat{x}\cdot {D})-t_0\right)$, implying that
$$\mbox{supp }\, (\mathcal{F}^{-1} \phi^{(\hat{x})}_{y,\eta})\subset \left(c^{-1}\inf(\hat{x}\cdot {D})-t_0,\, c^{-1}\sup(\hat{x}\cdot {D})-t_0\right).$$
This together with the fact that $(\mathcal{F}^{-1} \phi^{(\hat{x})}_{y,\eta})(k) = \delta (k-c^{-1}\hat{x}\cdot y +\eta)$ yields
\ben
c^{-1}\inf(\hat{x}\cdot {D})-t_0 <  c^{-1}\hat{x}\cdot y -\eta < c^{-1}\sup(\hat{x}\cdot {D})-t_0,
\enn
that is,
$y\in {K}_{D,\eta}^{(\hat{x})}$.
\end{proof}
}

\section{Indicator functions and uniqueness}\label{sec4}

The aim of this section is to design indicator functions for characterizing the strip ${K}_{D}^{(\hat{x})}$ and the excitation instant $t_0$, which also imply uniqueness results by using multi-frequency data at any two opposite directions.

 Recall from Theorem \ref{lem3.4} that the test function $\phi^{(\hat{x})}_{y,\eta}$ can be utilized to characterize $K_{D,\eta}^{(\hat{x})} $ through \eqref{RI}. Hence, we define the auxiliary indicator function :
\be\label{indicator4}
	I_{\eta}^{(\hat{x})}(y):=\sum_{n=1}^\infty\frac{|\langle \phi^{(\hat{x})}_{y,\eta}, \psi_n^{(\hat{x})} \rangle|_{L^2(0, K )}^2}{ |\lambda_n^{(\hat{x})}|}, \qquad y\in \R^3.
\en
By Theorem \ref{lem3.4} and Picard's theorem, there holds
		%\begin{remark}\label{I-eta}
		$I_{\eta}^{(\hat{x})}(y) < +\infty$ if and only if $y\in K_{{D},\eta}^{(\hat{x})} $ for all $\eta \in {\R}$.
	%	\end{remark}
Then, we define the indicator function determined by a pair of opposite observation directions :
		\begin{equation}\label{indicator1}
		W_{\eta}^{(\hat{x})} (y) = \left[I_{\eta}^{(\hat{x})}(y) +I_{\eta}^{(-\hat{x})}(y) \right]^{-1}.
		\end{equation}
		Combining the range identity \eqref{RI} and Theorem \ref{lem3.4} yields

		{\color{mgq}
		\begin{theorem}\label{Th:factorization}
		\begin{equation*}
		W_{\eta}^{(\hat{x})}(y) = \left\{
		\begin{aligned}
		&\mbox{a finite positive number} &, \mbox{ if } y\in K_{D,\eta}^{(\hat{x})} \cap K_{D,\eta}^{(-\hat{x})},\\
		& 0 &, \mbox{ if } y\notin {K_{D,\eta}^{(\hat{x})} \cap K_{D,\eta}^{(-\hat{x})} }.
		\end{aligned}
		\right.
		\end{equation*}
		\end{theorem}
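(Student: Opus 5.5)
The plan is to reduce the statement to the characterization of the auxiliary indicator $I_\eta^{(\hat{x})}$ that follows from Theorem \ref{lem3.4}, applied once to $\hat{x}$ and once to $-\hat{x}$. First, for fixed $\hat{x}$ and $\eta$, we combine the range identity \eqref{RI} of Theorem \ref{RH} with Picard's theorem (Theorem A.2), applied to the non-negative self-adjoint compact operator $F_\#$ with eigensystem $(\lambda_n^{(\hat{x})},\psi_n^{(\hat{x})})$. This gives $\phi^{(\hat{x})}_{y,\eta}\in\range(F_\#^{1/2})$ if and only if $I_\eta^{(\hat{x})}(y)<\infty$. By \eqref{RI} and Theorem \ref{lem3.4}, this in turn holds if and only if $y\in K^{(\hat{x})}_{D,\eta}$.

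The same argument works verbatim for $-\hat{x}$. The operators $F=F^{(-\hat{x})}_D$ and $L=L^{(-\hat{x})}_D$ are of the same form, and Theorems \ref{RH} and \ref{lem3.4} hold for every direction in $\s^2$. Hence $I_\eta^{(-\hat{x})}(y)<\infty$ if and only if $y\in K^{(-\hat{x})}_{D,\eta}$, and this strip has the explicit form \eqref{K-}.

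Next we do the case analysis. Both $I_\eta^{(\pm\hat{x})}(y)$ are series of non-negative terms with values in $[0,+\infty]$, so their sum is finite if and only if each summand is finite, that is, if and only if $y\in K^{(\hat{x})}_{D,\eta}\cap K^{(-\hat{x})}_{D,\eta}$. In that case $W_\eta^{(\hat{x})}(y)$ is the reciprocal of a finite number. Otherwise the sum is $+\infty$, and we use the convention $1/\infty=0$ to get $W=0$.

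The step needing care is positivity, i.e.\ that the sum is not $0$, so that $W$ is a finite number rather than $+\infty$. I would argue as follows. Since $L$ is injective on the relevant element and $\phi^{(\hat{x})}_{y,\eta}$ has modulus one, we have $\phi^{(\hat{x})}_{y,\eta}\neq0$ in $L^2(0,K)$. Moreover, $F_\#$ has dense range, because $\range(F_\#^{1/2})=\range(L)$ is dense by Theorem \ref{Fac-F}. Hence $\ker F_\#=\{0\}$ and $\{\psi_n^{(\hat{x})}\}$ is a complete orthonormal basis of $L^2(0,K)$. By Parseval's identity, some coefficient $\langle\phi^{(\hat{x})}_{y,\eta},\psi_n^{(\hat{x})}\rangle$ is nonzero, so $I_\eta^{(\hat{x})}(y)>0$. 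This gives $0<W_\eta^{(\hat{x})}(y)<\infty$ on the intersection.

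I expect the only genuine subtlety to be this completeness of the eigenbasis, which needs density of the range of $F_\#$. The rest is bookkeeping with Theorem \ref{lem3.4}.
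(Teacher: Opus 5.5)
Your argument is correct and is essentially the paper's proof: you apply Theorem \ref{lem3.4} to both $\hat{x}$ and $-\hat{x}$, combine it with the range identity \eqref{RI} and Picard's theorem, and finish with the same case analysis using $1/\infty=0$. Your extra check that $I_\eta^{(\hat{x})}(y)+I_\eta^{(-\hat{x})}(y)>0$, which guarantees that $W_\eta^{(\hat{x})}(y)$ is actually finite, fills a step the paper passes over; the only quibble is that your appeal to injectivity of $L$ there is unnecessary ($L$ is in fact not injective), since $\phi^{(\hat{x})}_{y,\eta}\neq 0$ follows directly from $|\phi^{(\hat{x})}_{y,\eta}|=1$ on $(0,K)$.
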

		}

		\begin{proof}
		(i) For $y\in K_{{\color{mgq}D},\eta}^{(\hat{x})} \cap K_{{\color{mgq}D},\eta}^{(-\hat{x})}$, there holds that $\phi^{(\hat{x})}_{y,\eta}\in \range (L^{(\hat{x})})$ and $\phi^{(-\hat{x})}_{y,\eta}\in \range (L^{(-\hat{x})})$ for all $\eta \in {\mgq \R}$ by Theorem \ref{lem3.4}. From the range identity \eqref{RI} and Picard's theorem (Theorem A. 2), we have that $I_{\eta}^{(\hat{x})}(y)<+\infty$ and $I_{\eta}^{(-\hat{x})}(y)<+\infty$. Thus, $W_{\eta}^{(\hat{x})}(y)$ is a finite positive number.

		(ii) Suppose that $y\notin {K_{{\color{mgq}D},\eta}^{(\hat{x})} \cap K_{{\color{mgq}D},\eta}^{(-\hat{x})}}$. Without loss of generality we  assume that $y\notin {K_{{\color{mgq}D},\eta}^{(\hat{x})} }$. In this case, $\phi^{(\hat{x})}_{y,\eta}\notin \range (L^{(\hat{x})})$ and $I_{\eta}^{(\hat{x})}(y)=+\infty$. Thus, $W_{\eta}^{(\hat{x})}(y)=0$.
		\end{proof}

		 We observe that ${K}_{D,\eta}^{(\hat{x})}$ intersect with  ${K}_{D,\eta}^{(-\hat{x})}$ when $\eta$ lies in a neighborhood of the excitation instant $t_0$. Define
		 \ben
		 \eta_1:=\inf  \left\{\eta: {K}_{D,\eta}^{(\hat{x})} \cap
		 {K}_{D,\eta}^{(-\hat{x})}  \neq \emptyset  \right\},\quad
		  \eta_2:=\sup \left\{\eta: {K}_{D,\eta}^{(\hat{x})} \cap
		 {K}_{D,\eta}^{(-\hat{x})}  \neq \emptyset  \right\},
		 \enn
		and
		\ben
		h^{(\hat x)}(\eta):=\max_{y\in B_R}W_{\eta}^{(\hat{x})}(y),\quad \eta>0.
		\enn
	Then one can recover the unknown pulse moment $t_0$ by plotting the one-dimensional function $\eta \to 	h^{(\hat x)}(\eta)$.

		\begin{theorem}[Determination of $t_0$]\label{Th:max}
		%The function $h^{(\hat x)}_{t_0}(\eta)=\max_{y\in B_R}W_{\eta}^{(\hat{x})}(y)$ {\color{mgq} associated with the pulse moment $t_0$} fulfills that
		We have $t_0 = \frac{\eta_1+\eta_2}{2}$ and
		\ben
		h^{(\hat x)}(\eta)=\left\{\begin{array}{lll}
		\geq 0, &&\mbox{if}\quad\eta\in(\eta_1,\eta_2),\\
		0, && \mbox{if}\quad\eta\notin(\eta_1,\eta_2).
		\end{array}\right.
		\enn
	\end{theorem}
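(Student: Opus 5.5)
The plan is to compute $\eta_1,\eta_2$ explicitly from the strip definitions and then apply Theorem \ref{Th:factorization}. Write $a:=\inf(\hat{x}\cdot D)$, $b:=\sup(\hat{x}\cdot D)$ and $s:=\hat{x}\cdot y$. By \eqref{tildeK} and \eqref{K-}, the condition $y\in K_{D,\eta}^{(\hat{x})}\cap K_{D,\eta}^{(-\hat{x})}$ reduces to a condition on the single real number $s$:
\begin{equation*}
s\in\bigl(a+c(\eta-t_0),\,b+c(\eta-t_0)\bigr)\cap\bigl(a-c(\eta-t_0),\,b-c(\eta-t_0)\bigr).
\end{equation*}
These are two open intervals of the same length $b-a>0$. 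Their centers are $\tfrac{a+b}{2}\pm c(\eta-t_0)$, so the centers are $2c|\eta-t_0|$ apart. The intervals therefore intersect if and only if $2c|\eta-t_0|<b-a$, that is, $|\eta-t_0|<\frac{b-a}{2c}$.

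It follows that $\{\eta:K_{D,\eta}^{(\hat{x})}\cap K_{D,\eta}^{(-\hat{x})}\neq\emptyset\}$ is the open interval $(t_0-\frac{b-a}{2c},\,t_0+\frac{b-a}{2c})$. Hence $\eta_1=t_0-\frac{b-a}{2c}$ and $\eta_2=t_0+\frac{b-a}{2c}$, which gives $t_0=\frac{\eta_1+\eta_2}{2}$ and identifies $(\eta_1,\eta_2)$ as exactly the set of $\eta$ where the intersection is nonempty.

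For the behaviour of $h^{(\hat x)}$, first take $\eta\notin(\eta_1,\eta_2)$. Then the intersection of the strips is empty, so every $y$ lies outside it, and Theorem \ref{Th:factorization} gives $W_\eta^{(\hat x)}(y)=0$ for all $y\in B_R$. Thus $h^{(\hat x)}(\eta)=0$.

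Now take $\eta\in(\eta_1,\eta_2)$. By Theorem \ref{Th:factorization}, $W_\eta^{(\hat x)}$ is nonnegative everywhere, so $h^{(\hat x)}(\eta)\ge 0$, which is what the statement asserts. To get strict positivity, I would show that the intersection, which is a slab, meets $B_R$. Using $\overline{D}\subset B_R$ we have $-R<a<b<R$, and the intersection interval for $s$ contains $\tfrac{a+b}{2}\in(-R,R)$ when $\eta=t_0$. For other $\eta$ the interval is contained in a shift of it and may in principle leave $(-R,R)$, so strict positivity can fail for some $\eta$. This is presumably why the statement only claims $\ge 0$.

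The one genuinely delicate point is the interpretation of $\max$. $W$ is not continuous, and on the slab its value is merely ``finite positive'', so the maximum should be read as a supremum. With that reading nothing else is needed beyond Theorem \ref{Th:factorization}.
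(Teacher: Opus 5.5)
Your argument is correct and takes the same route as the paper: compute $\eta_1,\eta_2$ from \eqref{tildeK} and \eqref{K-}, then read off the behaviour of $h^{(\hat x)}$ from Theorem \ref{Th:factorization}. You carry out the ``simple calculation'' that the paper skips. Your values $\eta_{1,2}=t_0\mp\frac{b-a}{2c}$ are the correct ones; the paper's displayed formula drops the factor $1/c$. That omission does not affect the conclusion $t_0=\frac{\eta_1+\eta_2}{2}$, and the two agree when $c=1$.

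One remark at the end of your proposal is false, although it does not affect the proof of the statement as written. The overlap of the two strips does not drift. The strips move by $\pm c(\eta-t_0)$ in opposite directions, so the intersection interval is $\bigl(a+c|\eta-t_0|,\,b-c|\eta-t_0|\bigr)\subset(a,b)$. This interval shrinks symmetrically about $\frac{a+b}{2}$ for every $\eta\in(\eta_1,\eta_2)$; it is not ``contained in a shift''. Hence $y^*=\frac{a+b}{2}\hat x$ satisfies $|y^*|<R$ and $y^*\in K_{D,\eta}^{(\hat x)}\cap K_{D,\eta}^{(-\hat x)}$. It follows that $h^{(\hat x)}(\eta)\geq W_\eta^{(\hat x)}(y^*)>0$ on all of $(\eta_1,\eta_2)$.

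This strict positivity is what makes the criterion usable. With only ``$\geq 0$'' inside the interval, $h^{(\hat x)}$ would not single out $\eta_1$ and $\eta_2$, and the numerical section reads these off as the endpoints of the support of $h^{(\hat x)}$. So read the weak inequality in the statement as an understatement, not as a sign that positivity can fail.
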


		\begin{proof}
		From the definitions of $K_{{\color{mgq}D},\eta}^{(\hat{x})}$ in \eqref{tildeK} and $K_{{\color{mgq}D},\eta}^{(-\hat{x})}$ \eqref{K-}, one  obtains
		$$\eta_1 = t_0 -\frac{\sup(\hat{x}\cdot {\color{mgq}D})-\inf(\hat{x}\cdot {\color{mgq}D})}{2} \mbox{ and } \eta_2 = t_0 +\frac{\sup(\hat{x}\cdot {\color{mgq}D})-\inf(\hat{x}\cdot {\color{mgq}D})}{2}$$
		after a simple calculation. Hence, $t_0 = (\eta_1+\eta_2)/2$. The indicating behavior of $h^{(\hat{x})}$ follows directly from Theorem \ref{Th:factorization}.
		\end{proof}

		Having determined the pulse moment $t_0$ from Theorem \ref{Th:max}, one can characterize the closed strip $K_{{\color{mgq}D}}^{(\hat{x})}$ through the indicator function $I^{(\hat{x})}(y)$:
		\begin{equation}\label{I-to}
		I^{(\hat{x})}(y) :=\left[\sum_{n=1}^\infty\frac{|\langle \phi^{(\hat{x})}_{y,t_0}, \psi_n^{(\hat{x})} \rangle|_{L^2(0, K )}^2}{ |\lambda_n^{(\hat{x})}|}\right]^{-1}, \qquad y\in \R^3.
		\end{equation}

		\begin{theorem}[Determination of the strip $K_{{\color{mgq}D}}^{(\hat{x})}$]\label{Th:kd}
		The indicator function $I^{(\hat{x})}(y)$ fulfills that
		\ben
		I^{(\hat{x})}(y)=\left\{\begin{array}{lll}
		> 0, &&\mbox{if}\quad y\in K_{{\color{mgq}D}}^{(\hat{x})},\\
		0, && \mbox{if}\quad y\notin K_{{\color{mgq}D}}^{(\hat{x})}.
		\end{array}\right.
		\enn
		\end{theorem}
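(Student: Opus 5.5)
The plan is to reduce the statement to Theorem \ref{lem3.4} with the temporal sampling variable fixed at the true pulse moment $\eta=t_0$. First observe that $K_{D,t_0}^{(\hat{x})}=K_D^{(\hat{x})}$. Setting $\eta=t_0$ in \eqref{tildeK} makes the shift $-ct_0+c\eta$ vanish, so the two strips are defined by the same inequalities $\inf(\hat{x}\cdot D)<\hat{x}\cdot y<\sup(\hat{x}\cdot D)$. Also, $I^{(\hat{x})}(y)$ is exactly the reciprocal of the auxiliary indicator $I_{t_0}^{(\hat{x})}(y)$ from \eqref{indicator4}, with the convention $1/(+\infty)=0$.

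For $y\in K_D^{(\hat{x})}=K_{D,t_0}^{(\hat{x})}$, Theorem \ref{lem3.4} gives $\phi^{(\hat{x})}_{y,t_0}\in\range(L)$. The range identity of Theorem \ref{RH} then yields $\phi^{(\hat{x})}_{y,t_0}\in\range(F_\#^{1/2})$. Because $F_\#$ is self-adjoint, non-negative and compact with eigensystem $(\lambda_n^{(\hat{x})},\psi_n^{(\hat{x})})$, Picard's theorem (Theorem A.2) applied to $F_\#^{1/2}$, whose singular values are $|\lambda_n^{(\hat{x})}|^{1/2}$, shows that the series $\sum_n|\langle\phi^{(\hat{x})}_{y,t_0},\psi_n^{(\hat{x})}\rangle|^2/|\lambda_n^{(\hat{x})}|$ converges. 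Its reciprocal is therefore strictly positive.

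One point needs a remark here: the series must not be zero. If it were zero, the reciprocal would be $+\infty$ rather than a finite positive number. I would argue that $\phi^{(\hat{x})}_{y,t_0}$ lies in $\overline{\range(F_\#)}$, since $\range(L)$ is dense by Theorem \ref{Fac-F}, and that $\phi^{(\hat{x})}_{y,t_0}\neq0$ because it has modulus one. Hence some coefficient $\langle\phi^{(\hat{x})}_{y,t_0},\psi_n^{(\hat{x})}\rangle$ is nonzero. In any case the statement only asks for $I^{(\hat{x})}(y)>0$, which holds whenever the series is finite.

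For $y\notin K_D^{(\hat{x})}$, the converse direction of Theorem \ref{lem3.4} (its part (ii)) gives $\phi^{(\hat{x})}_{y,t_0}\notin\range(L)=\range(F_\#^{1/2})$. Picard's criterion then forces the series to diverge, so $I^{(\hat{x})}(y)=0$.

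The only delicate step is justifying Picard's criterion for $F_\#^{1/2}$. This requires $F_\#$ to be compact and the test function to lie in the closure of its range, and both follow from Theorem \ref{Fac-F} together with the density of $\range(L)$. Everything else is an immediate specialization of Theorem \ref{Th:factorization} to a single direction with $\eta=t_0$.
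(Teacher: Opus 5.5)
Your proof is correct and is essentially the paper's argument. The paper simply cites Theorem \ref{Th:factorization} at $\eta=t_0$, and the proof of that theorem, applied to a single direction, is exactly the chain you write out: $K_{D,t_0}^{(\hat{x})}=K_D^{(\hat{x})}$, then Theorem \ref{lem3.4}, then the range identity of Theorem \ref{RH}, then Picard's criterion for $F_\#^{1/2}$. Your additional checks fill in details the paper leaves implicit: that $\phi^{(\hat{x})}_{y,t_0}$ lies in $\overline{\range(F_\#^{1/2})}=L^2(0,K)$ by density of $\range(L)$, so solvability reduces to convergence of the series, and that the series cannot vanish.
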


		\begin{proof} It is a direct consequence of
	Theorem \ref{Th:factorization} when $\eta = t_0$.
		\end{proof}

		According to  Theorems \ref{Th:max} and \ref{Th:kd}, we get uniqueness results for identifying the strip $K_{{\color{mgq}D}}^{(\hat{x})}$ and the pulse moment $t_0$, which are summarized as follows.
		\begin{theorem}[Uniqueness at two opposite directions]
		The strip $K_{{\color{mgq}D}}^{(\hat{x})}$ and the pulse moment $t_0$ can be uniquely determined by the multi-frequency far-field data $\{u^{\infty}(\pm \hat{x},\omega):\omega\in [\omega_{\min},\omega_{\max}]\}$.
		\end{theorem}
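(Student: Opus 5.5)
The plan is to show that the data $\{u^{\infty}(\pm\hat{x},\omega):\omega\in[\omega_{\min},\omega_{\max}]\}$ determine every quantity entering the indicator functions. Then Theorems \ref{Th:max} and \ref{Th:kd} express $t_0$ and $K_D^{(\hat{x})}$ purely in terms of those quantities.

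\emph{Step 1: the data fix the spectral objects.} For each of the two directions $\pm\hat{x}$, the kernel $u^\infty(\pm\hat{x},\kappa+\tau-s)$ with $\tau,s\in(0,K)$ only uses frequencies in $[\omega_{\min},\omega_{\max}]$. Hence $F^{(\pm\hat{x})}$ is determined by the data, and so are $\real F$, $\ima F$, $F_\#$, and the eigensystems $(\lambda_n^{(\pm\hat{x})},\psi_n^{(\pm\hat{x})})$. The test functions $\phi^{(\pm\hat{x})}_{y,\eta}$ are explicit and independent of $D$ and $t_0$. Therefore $I^{(\pm\hat{x})}_\eta(y)$, $W^{(\hat{x})}_\eta(y)$ and $h^{(\hat{x})}(\eta)$ are functionals of the data alone. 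One subtlety is that eigensystems are not unique when eigenvalues repeat. Even so, the Picard series equals $\|F_\#^{-1/2}\phi\|^2$ (or $+\infty$), which does not depend on the choice of orthonormal eigenbasis. I would note this explicitly.

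\emph{Step 2: recover $t_0$.} Suppose two source configurations $(D,t_0)$ and $(\tilde D,\tilde t_0)$ produce the same data at $\pm\hat{x}$. By Step 1 they yield the same function $h^{(\hat{x})}$. By Theorem \ref{Th:max}, the set of $\eta$ where $h^{(\hat{x})}$ is nonzero is exactly the interval $(\eta_1,\eta_2)$ determined by the data. This holds because $W^{(\hat{x})}_\eta(y)>0$ precisely on $K_{D,\eta}^{(\hat{x})}\cap K_{D,\eta}^{(-\hat{x})}\cap B_R$, and that intersection is nonempty for $\eta$ in the open interval. Here I would remark that the stated ``$\geq 0$'' in Theorem \ref{Th:max} should be read as ``$>0$'', which Theorem \ref{Th:factorization} supplies. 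Consequently $\eta_1=\tilde\eta_1$ and $\eta_2=\tilde\eta_2$, so $t_0=(\eta_1+\eta_2)/2=\tilde t_0$.

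\emph{Step 3: recover the strip.} With $t_0$ known, $I^{(\hat{x})}$ in \eqref{I-to} is determined by the data. Theorem \ref{Th:kd} identifies $K_D^{(\hat{x})}$ as its positivity set, so $K_D^{(\hat{x})}=K_{\tilde D}^{(\hat{x})}$.

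\emph{Main obstacle.} The delicate point is Step 2: one must ensure that the nonvanishing set of $h^{(\hat{x})}$ is genuinely the full interval $(\eta_1,\eta_2)$ and that maximizing over $B_R$ does not lose it. I would handle this by checking that for $\eta\in(\eta_1,\eta_2)$ the intersection of the two strips meets $B_R$. The strips are slabs centered about $\hat{x}$-levels within $ct_0$-shifts of $\hat{x}\cdot D\subset(-R,R)$, and near $\eta=t_0$ both slabs contain $D\subset B_R$. If needed, I would enlarge $R$, or replace the maximum over $B_R$ by a supremum over $\R^3$, since positivity only requires one point $y$.
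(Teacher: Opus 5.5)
Your proposal is correct and follows the same route as the paper. The paper simply notes that the data at $\pm\hat{x}$ determine $h^{(\hat{x})}$ and $I^{(\hat{x})}$ and then invokes Theorems \ref{Th:max} and \ref{Th:kd}; your extra checks (the Picard series does not depend on the choice of eigenbasis, and ``$\geq 0$'' should be read as ``$>0$'') just make explicit what the paper leaves implicit.

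On the $B_R$ point, no enlargement of $R$ is needed, and your remark that ``both slabs contain $D$ near $\eta=t_0$'' holds only at $\eta=t_0$ itself. Write $a=\inf(\hat{x}\cdot D)$ and $b=\sup(\hat{x}\cdot D)$. Then $K_{D,\eta}^{(\hat{x})}\cap K_{D,\eta}^{(-\hat{x})}=\{y: a+c|\eta-t_0|<\hat{x}\cdot y<b-c|\eta-t_0|\}$, and whenever this set is nonempty it contains $y=\frac{a+b}{2}\hat{x}$, which lies in $B_R$ because $a,b\in(-R,R)$.
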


		In the case of sparse observation directions $\{\hat{x}_m: m=1,2,\cdots, M\}$, we shall make use of the following indicator function:
		\begin{equation}\label{W2}
		I(y){= \left[\sum_{m=1}^M \frac{1}{I^{(\hat{x}_m)}(y) }\right]^{-1}}= \left[\sum_{m=1}^M\sum_{n=1}^N\frac{|\langle \phi^{(\hat{x}_m)}_{y,t_0}, \psi_n^{(\hat{x}_m)} \rangle|_{L^2(0, K )}^2}{ |\lambda_n^{(\hat{x}_m)}|}  \right]^{-1}, y\in \R^3,
		\end{equation}
		{\color{mgq} where $\left(\lambda_n^{(\hat{x}_m)}, \psi_n^{(\hat{x}_m)}\right)$ are the eigensystems of the operators $F^{(\hat{x}_m)}_{\#}$.}
Define the $\Theta$-convex hull of ${\color{mgq}D}$ associated with the directions $\{\hat{x}_m: m=1,2,\cdots, M\}$ as (see Figure \ref{inter} for $M=2$)
		\ben
		\Theta_{{\color{mgq}D}}:=\bigcap_{m=1,2,\cdots, M} K_{{\color{mgq}D}}^{(\hat{x}_m)}.
		\enn

\begin{figure}[H]
	\centering
	\scalebox{0.8}{
		\begin{tikzpicture}
			\draw[line width=3cm,color=gray!20] (-4,0) -- (4,0);
		%	\draw[line width=2.5cm,color=blue!20] (-3,-3) -- (-3,3);
			
			\draw[line width=2.5cm,color=gray!20] (-0.25,-3.5) -- (-0.25,3.5);
			
			\draw (0.5,3) node [left] {$K_{D}^{(\hat{x}_1)}$};
			
				\draw (3.5,0) node [left] {$K_{D}^{(\hat{x}_2)}$};
				
			%\draw (3,2) node [left] {${K}_{D,\eta}^{(\hat{x})}$};
		%	\draw (-2.5,2) node [left] {${K}_{D,\eta}^{(-\hat{x})}$};
		%	\draw[->] (-6,0) -- (6,0) node[above] {$x_1$} coordinate(x axis);
		%	\draw[->] (0,-4) -- (0,4) node[right] {$x_2$} coordinate(y axis);
		%	\foreach \x/\xtext in {-4,-2, 2, 4}
		%	\draw[xshift=\x cm] (0pt,1pt) -- (0pt,-1pt) node[below] {$\xtext$};
		%	\foreach \y/\ytext in {-4,-2, 2, 4}
		%	\draw[yshift=\y cm] (1pt,0pt) -- (-1pt,0pt) node[left] {$\ytext$};
			
			\draw[domain = -2:360,very thick][samples = 200] plot({cos(\x)+0.65*cos(2* \x)-0.65},{1.5*sin(\x)});
			
			\draw [very thick, densely dotted] (-1.5,-3.5) -- (-1.5,3.5);
			\draw [very thick, densely dotted] (1,-3.5) -- (1,3.5);
			\draw [very thick, densely dotted] (-4,1.5) -- (4,1.5);
			\draw [very thick, densely dotted] (-4,-1.5) -- (4,-1.5);
			%\draw [very thick, densely dotted] (-4.25,-3) -- (-4.25,3);
		%	\draw [very thick, densely dotted] (-1.75,-3) -- (-1.75,3);
			\draw (-0.5,0) node [below] {$D$};
			
		%	\draw [thick,->] (4.1,1) -- (4.5,1);
		%	\draw (4.5,1) node[right] {$\hat{x} = (1,0)$};
			
		\end{tikzpicture}
	}
	\caption{Intersection of the strips $K_{D}^{(\hat{x}_1)}$  and
	$K_{D}^{(\hat{x}_2)}$ with $\hat{x}_1=(1,0)$ and $\hat{x }_2=(0,1)$.}
	\label{inter}
\end{figure}

		\begin{theorem}\label{TH:hull}
		We have $I(y) > 0$ if $y\in \Theta_{{\color{mgq}D}}$ and $I(y)=0$ if
		$y\notin \Theta_{{\color{mgq}D}}$.
		\end{theorem}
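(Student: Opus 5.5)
The plan is to reduce Theorem \ref{TH:hull} to Theorem \ref{Th:kd} applied to each direction separately. By the definition \eqref{W2}, $I(y)^{-1}=\sum_{m=1}^M 1/I^{(\hat{x}_m)}(y)$, where
\[
\frac{1}{I^{(\hat{x}_m)}(y)}=\sum_{n=1}^\infty\frac{|\langle \phi^{(\hat{x}_m)}_{y,t_0}, \psi_n^{(\hat{x}_m)} \rangle|^2}{|\lambda_n^{(\hat{x}_m)}|}\in[0,+\infty].
\]
Each summand is a sum of nonnegative terms, so the whole expression is a finite sum of nonnegative extended reals. I use the conventions $1/0=+\infty$ and $1/(+\infty)=0$, and I take the $n$-sum in \eqref{W2} to be the full series, with $N=\infty$; a truncated $N$ would only be a numerical approximation. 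The key fact is that the excitation instant $t_0$ is the same for every direction, so every test function $\phi^{(\hat{x}_m)}_{y,t_0}$ uses the true $t_0$. Then $K^{(\hat{x}_m)}_{D,t_0}=K^{(\hat{x}_m)}_D$ by \eqref{tildeK}.

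\textbf{Case $y\in\Theta_D$.} Then $y\in K_D^{(\hat{x}_m)}$ for every $m$. By Theorem \ref{lem3.4} with $\eta=t_0$, each $\phi^{(\hat{x}_m)}_{y,t_0}$ lies in $\range(L^{(\hat{x}_m)})$. By the range identity of Theorem \ref{RH} together with Picard's theorem, each series $1/I^{(\hat{x}_m)}(y)$ is finite; equivalently, Theorem \ref{Th:kd} gives $I^{(\hat{x}_m)}(y)>0$. A finite sum of finitely many finite nonnegative numbers is finite. To conclude $I(y)>0$ I also need the sum to be strictly positive, so that its reciprocal is not $+\infty$. This holds because, for instance, $\phi^{(\hat{x}_1)}_{y,t_0}\neq 0$ in $L^2(0,K)$. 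Since it lies in the closure of the range of the self-adjoint operator $F_\#$, it cannot be orthogonal to all eigenfunctions, so at least one coefficient is nonzero. Hence $I(y)\in(0,\infty)$; only $I(y)>0$ is claimed, and this holds.

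\textbf{Case $y\notin\Theta_D$.} Then there is some index $m_0$ with $y\notin K_D^{(\hat{x}_{m_0})}=K_{D,t_0}^{(\hat{x}_{m_0})}$. By Theorem \ref{lem3.4}, $\phi^{(\hat{x}_{m_0})}_{y,t_0}\notin\range(L^{(\hat{x}_{m_0})})=\range((F^{(\hat{x}_{m_0})}_\#)^{1/2})$. Picard's criterion then gives $1/I^{(\hat{x}_{m_0})}(y)=+\infty$. The other summands are nonnegative, so the total sum is $+\infty$ and $I(y)=0$.

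The main obstacle is the subtle point in Picard's theorem. The equivalence ``finite series $\Leftrightarrow$ in the range of $F_\#^{1/2}$'' requires the test function to lie in the closure of $\range(F_\#)$. This follows from the denseness of $\range(L)$ asserted in Theorem \ref{Fac-F} combined with \eqref{RI}, which I will cite explicitly. Beyond that, the argument is bookkeeping with extended reals and a direct application of the single-direction results.
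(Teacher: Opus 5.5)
Your proposal is correct and follows the paper's proof: both apply Theorem \ref{Th:kd} (equivalently, Theorem \ref{lem3.4} with $\eta=t_0$ plus Picard's theorem) direction by direction, with every series finite on $\Theta_D$ and at least one divergent off $\Theta_D$. Your extra checks fill in details the paper leaves implicit: the condition $N(A^*)^\perp$ in Picard's theorem, obtained from the dense range of $L$, and the nonvanishing of the series, which the paper asserts as $I^{(\hat x_m)}(y)<\infty$ and which is needed only for $I(y)$ to be finite, not for $I(y)>0$.
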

		\begin{proof}
		If $y\in \Theta_{{\color{mgq}D}}$, then $y\in K_{{\color{mgq}D}}^{(\hat x_m)}$ for all $m=1,2,...,M$, yielding that $\hat x_m \cdot y \in \hat x_m \cdot {{\color{mgq}D}}$. Hence, one deduces from  Theorem \ref{Th:kd} that  $0< I^{(\hat x_m)}(y)<\infty$ for all $m=1,2,...,M$, implying that $I(y) > 0$. On the other hand, if $y\notin \Theta_{{\color{mgq}D}}$, there must exist some unit vector $\hat{x}_l$ such that $y\notin K_{{\color{mgq}D}}^{(\hat x_l)}$. Again, using Theorem \ref{Th:kd} we get $$
		[I^{(\hat{x}_l)}(y)]^{-1}=\infty,$$ which proves $I(y)=0$ for $y\notin \Theta_{{\color{mgq}D}}$.
		\end{proof}
The values of $I(y)$ are expected to be large for $y\in {\Theta_{{\color{mgq}D}}}$ and small for those $y\notin \Theta_{{\color{mgq}D}}$. % because $\Theta_D\subset K_D^{(\hat{x}_j)}$ for all $j=1,2,\cdots, M$.
As a by-product of the above factorization method, we obtain an uniqueness result with multi-frequency far-field data. Denote by $\mbox{ch}(D)$ the convex hull of $D$, that is,  the intersections of all half spaces containing $D$.

\begin{theorem}
Under the Assumption \ref{sump}, both $\mbox{ch}({{\color{mgq}D}})$ and $t_0$ can be uniquely determined by the multi-frequency far-field patterns $\{u^\infty(\hat{x}, \omega):  \omega \in[\omega_{\min}, \omega_{\max}],\, \hat{x}\in \s^2\}.$
\end{theorem}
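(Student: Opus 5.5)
The plan is to combine the two-direction uniqueness result with the fact that the convex hull of a bounded domain is the intersection of the supporting strips over all directions. The proof has two steps: first recover $t_0$, then recover $\mbox{ch}(D)$.

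\textbf{Recovering $t_0$.} Fix any direction $\hat{x}\in\s^2$. The data contain $\{u^\infty(\pm\hat{x},\omega):\omega\in[\omega_{\min},\omega_{\max}]\}$. These determine the operators $F^{(\pm\hat{x})}$, hence $F_\#^{(\pm\hat{x})}$ and their eigensystems, and therefore the indicator $W^{(\hat{x})}_\eta$ for every $\eta$. By Theorem \ref{Th:max}, the function $h^{(\hat{x})}$ is nonzero exactly on $(\eta_1,\eta_2)$. So $\eta_1,\eta_2$ are determined by the data, and $t_0=(\eta_1+\eta_2)/2$ is determined.

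To be careful, I would phrase this as a standard uniqueness argument. Suppose two sources $(D,t_0,S)$ and $(\tilde D,\tilde t_0,\tilde S)$, both satisfying Assumption \ref{sump}, produce the same far-field patterns. Then $F^{(\pm\hat{x})}=\tilde F^{(\pm\hat{x})}$. By Theorem \ref{RH} the ranges $\range(L)$ coincide for every direction, so Theorem \ref{lem3.4} gives $K^{(\pm\hat{x})}_{D,\eta}=K^{(\pm\hat{x})}_{\tilde D,\eta}$ for all $\eta$. The formulas for $\eta_1,\eta_2$ in the proof of Theorem \ref{Th:max} then force $t_0=\tilde t_0$.

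\textbf{Recovering $\mbox{ch}(D)$.} With $t_0$ known, Theorem \ref{Th:kd} shows that $I^{(\hat{x})}$ determines the open strip $K_D^{(\hat{x})}$ for every $\hat{x}\in\s^2$. Equivalently, the data determine the values $\inf(\hat{x}\cdot D)$ and $\sup(\hat{x}\cdot D)$ for all $\hat{x}\in\s^2$, i.e.\ the support function of $D$. The key geometric fact is
$$\bigcap_{\hat{x}\in\s^2}\overline{K_D^{(\hat{x})}}=\bigcap_{\hat{x}\in\s^2}\{y:\hat{x}\cdot y\le \sup(\hat{x}\cdot D)\}=\overline{\mbox{ch}(D)},$$
the closed convex hull. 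The first equality holds because each strip is the intersection of two opposite half spaces, and $\inf(\hat{x}\cdot D)=-\sup(-\hat{x}\cdot D)$. The second is the separating hyperplane theorem: if $y\notin\overline{\mbox{ch}(D)}$, some direction $\hat{x}$ has $\hat{x}\cdot y>\sup(\hat{x}\cdot D)$.

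Since $D$ is open, $\mbox{ch}(D)$ is an open convex set and equals the interior of its closure. Hence $\mbox{ch}(D)=\mbox{ch}(\tilde D)$ whenever all strips coincide. One could instead take $\bigcap_{\hat{x}}K^{(\hat{x})}_D$ with open strips directly, but that intersection need not be open, so going through closures and interiors is cleaner.

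\textbf{Main obstacle.} The delicate point is not the geometry but the justification that equal data imply equal strips. This step relies on the range identity of Theorem \ref{RH} together with Picard's criterion: the indicator at each sampling point is computed purely from the eigensystem of $F_\#$, which is data-determined, and its finiteness characterizes membership in the strip by Theorem \ref{lem3.4}. I would state explicitly that Assumption \ref{sump}, in particular $|S|>0$ a.e., is exactly what makes Theorem \ref{RH} applicable for every direction, so the argument runs uniformly over $\hat{x}\in\s^2$.
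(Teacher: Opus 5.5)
Your proposal is correct and takes the route the paper intends; the paper gives no separate proof and calls the result a by-product. You recover $t_0$ from a pair of opposite directions via Theorem \ref{Th:max}, recover each strip $K_D^{(\hat{x})}$ via Theorem \ref{Th:kd}, and obtain $\mbox{ch}(D)$ by intersecting the strips over all directions; your support-function and closure/interior bookkeeping just makes explicit the geometric step the paper leaves unstated.

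One small correction to your aside: $\bigcap_{\hat{x}\in\s^2}K_D^{(\hat{x})}$ is in fact already the open set $\mbox{ch}(D)$, because every boundary point of $\overline{\mbox{ch}(D)}$ lies on some supporting hyperplane $\hat{x}\cdot y=\sup(\hat{x}\cdot D)$ and is therefore excluded. So your caveat about open strips is unnecessary, though it does no harm.
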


\begin{corollary}
Let the Assumption \ref{sump} hold true. The excitation instants $t_j$ and the $\Theta$-convex hull of supports $D_j$ of $S(\cdot-a(t_j))$ ($j=0,1,2,\cdots J$) can be determined by the multi-frenquency far-field data $\{u_j^\infty(\hat{x}_m, \omega): \omega\in[\omega_{\min}, \omega_{\max}], \, m=1,2,\cdots, M, j=1,2,\cdots,J \}$.
\end{corollary}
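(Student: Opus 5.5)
The plan is to reduce the multi-instant problem (ISP') to $J+1$ independent copies of the single-instant problem (ISP), and then apply Theorems \ref{Th:max}, \ref{Th:kd} and \ref{TH:hull} to each copy. By definition, the data $u_j^\infty(\hat{x}_m,\omega)$ are the far-field patterns of \eqref{eq1} with the source $f_j(x,\omega)=S_j(x-a(t_j))e^{i\omega t_j}/\sqrt{2\pi}$, whose support is $\overline{D_j}$. So for each fixed $j$ the data fall exactly into the setting of Sections \ref{sec2}--\ref{sec4}, with $(S,D,t_0)$ replaced by $(S_j(\cdot-a(t_j)),D_j,t_j)$. The main point to check is that these separated data are what is actually available. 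Assumption \ref{sump} (the second item, together with $|a'|<c$) guarantees that the wave signals emitted at different instants do not overlap in time at the receivers. Hence the time-domain far-field pattern splits into $J+1$ disjoint pieces, and the inverse Fourier transform of each piece yields $u_j^\infty$. In the corollary this separation is already built into the data, so I will take it as given and only remark on it.

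For each $j$, I will first check the hypotheses of Theorem \ref{RH}. The profile $S_j(\cdot-a(t_j))$ is real-valued and continuous on $\overline{D_j}$, and it is nonzero a.e. there. This yields the range identity $\range(F_{j,\#}^{1/2})=\range(L_j)$ for every direction $\hat{x}_m$. Theorem \ref{lem3.4} then applies with $t_0$ replaced by $t_j$. One subtlety is that Theorem \ref{Th:max} needs a pair of opposite directions. I will therefore assume, as the paper does implicitly, that the set $\{\hat{x}_m\}$ contains some pair $\pm\hat{x}$; otherwise the strips $K^{(\hat{x})}_{D_j,\eta}$ and $K^{(-\hat{x})}_{D_j,\eta}$ cannot be compared. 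With such a pair, the function $h_j^{(\hat{x})}(\eta)$ built from $u_j^\infty(\pm\hat{x},\cdot)$ vanishes exactly outside $(\eta_1^{(j)},\eta_2^{(j)})$. This gives $t_j=(\eta_1^{(j)}+\eta_2^{(j)})/2$.

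With $t_j$ known, I will form the test functions $\phi^{(\hat{x}_m)}_{y,t_j}$ and the indicator $I_j(y)$ from \eqref{W2}, built from the eigensystems of $F^{(\hat{x}_m)}_{j,\#}$. Theorem \ref{TH:hull} then characterizes $\Theta_{D_j}=\bigcap_m K^{(\hat{x}_m)}_{D_j}$ as the set where $I_j>0$. Since each step depends only on the data indexed by $j$, the pair $(t_j,\Theta_{D_j})$ is uniquely determined for every $j$. The trajectory points can then be read off by ordering the $t_j$.

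The step I expect to be most delicate is the justification that the data split by index $j$ are legitimate. In the frequency domain the total pattern is $\sum_j u_j^\infty$, and disentangling it requires the time-domain separation from Assumption \ref{sump}. That separation must be argued from the support of $U^\infty(\hat{x},\cdot)$: the $j$-th contribution is supported in $[t_j-c^{-1}\sup\hat{x}\cdot D_j,\,t_j-c^{-1}\inf\hat{x}\cdot D_j]$, and these intervals are disjoint because $t_j-t_{j-1}>\mathrm{diam}(D_{j-1})/c$ and the source moves subsonically. A second point is the index range: the corollary's data list starts at $j=1$ while the claim covers $j=0$. I would treat this as a typo and include $j=0$.
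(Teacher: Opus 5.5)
Your proposal is correct and follows the paper's own argument. You treat each index $j$ as a separate single-instant problem with $(S,D,t_0)$ replaced by $(S_j(\cdot-a(t_j)),D_j,t_j)$ and rerun Theorems \ref{Th:max}, \ref{Th:kd} and \ref{TH:hull}. Your two clarifications, that the $\hat{x}_m$ must contain an opposite pair $\pm\hat{x}$ and that $j=0$ should be included, are genuine points the paper glosses over.

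One caution concerns your side remark on separability. It is not load-bearing, since the corollary supplies the $u_j^\infty$ already split by $j$, but it is false as stated. Disjointness of the arrival windows does not follow from $t_j-t_{j-1}>{\rm diam}(D_{j-1})/c$ together with $|a'|<c$, because motion towards $\hat{x}$ compresses the windows (Doppler effect). What is actually required, for every direction used, is
\[
c\,(t_j-t_{j-1})>\sup(\hat{x}\cdot D_j)-\inf(\hat{x}\cdot D_{j-1}).
\]
For a rigidly translating profile $D$, a sufficient condition is $t_j-t_{j-1}>{\rm diam}(D)/(c-\sup|a'|)$.
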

\begin{proof}
The time signals excited at different moments can be separated under the Assumption \ref{sump}. The wave fields generated at the
 radiating moment $t_j$ can be reduced to the wave equation
\begin{equation*}
\left\{
\begin{aligned}
&c^{-2}\frac{\partial^2 U}{\partial t^2} = \Delta U + S(x-a(t))\delta(t-t_j), \quad &&(x, t) \in \mathbb{R}^3 \times \mathbb{R}_+,\\
&U(x,0)=\partial_t U(x,0) = 0, &&x\in \mathbb{R}^3.
\end{aligned}
\right.
\end{equation*}
Then we can use the far-field data $u_j^\infty(\hat{x}_m, \omega)$ (that is, the inverse Fourier transform of $U^\infty(\hat{x}_m, t))$ to determine the pulse moment $t_j$ and the $\Theta$-convex hull of $D_j$,  by repeating the arguments for proving Theorems \ref{Th:max}, \ref{Th:kd} and \ref{TH:hull}.
\end{proof}

\section{Numerical examples}\label{num}
In this section, a couple of  numerical examples in $\R^2$ and $\R^3$
will be presented to verify the effectiveness of the algorithm proposed in this paper. Our algorithm also holds in two dimensions. All numerical examples are implemented by MATLAB. We aim to determine the pulse moment and extract the information of the shape and locations of a moving extended time-dependent source through the multi-frequency far-field measurement from a single or finite observation direction(s). In the practical scenarios, the multi-frequency data for reconstruction can be obtained via  the inverse Fourier transform of the the time-domain data.

We briefly summarize the inversion procedure. Discretize the  frequency interval $[\omega_{\min},\omega_{\max}]$ as
$$\omega_n=(n-0.5)\Delta \omega, \quad \Delta \omega:=\frac{K}{N}, \quad n=1,2,\cdots,N,$$ where $K$ is the  half of the bandwidth.
The far-field operator in (\ref{def:F}) can be approximated by applying the numerical rectangular rule:
\begin{equation}
({F}^{(\hat x)}\phi)(\tau_n) \approx \sum_{m=1}^{N} u^{\infty}(\hat x, \kappa+\tau_n-s_m)\phi(s_m)\Delta \omega,
\end{equation}
where $\tau_n:=n\Delta \omega$ and $s_m:=(m-0.5)\Delta \omega$, $n,m=1,2,\cdots,N$.
A discrete approximation of the far-field operator ${F}^{(\hat x)}$ is given  by the Toeplitz matrix
\be \label{matF}
F^{(\hat x)}:= \Delta \omega \begin{pmatrix}
u^{\infty}({\hat x},\kappa+\omega_1) & u^{\infty}({\hat x},\kappa-\omega_1) & \cdots   & u^{\infty}({\hat x},\kappa-\omega_{N-1})  \\
u^{\infty}({\hat x},\kappa+\omega_2) & u^{\infty}({\hat x},\kappa+\omega_1) & \cdots  &u^{\infty}({\hat x},\kappa-\omega_{N-2})   \\
\vdots & \vdots  &\vdots  &\vdots \\
u^{\infty}({\hat x},\kappa+\omega_{N-1}) & u^{\infty}({\hat x},\kappa+\omega_{N-2}) &  \cdots  & u^{\infty}({\hat x},\kappa-\omega_1)\\
u^{\infty}({\hat x},\kappa+\omega_N) & u^{\infty}({\hat x},\kappa+\omega_{N-1}) &  \cdots & u^{\infty}({\hat x},\kappa+\omega_1)\\
\end{pmatrix},
\en
where $F^{(\hat x)}$ is a $N\times N$ complex matrix. Here, we use {$2N-1$ samples $u^{\infty}(\hat x, \kappa+\omega_n), n=1,2,\cdots,N$ and $u^{\infty}(\hat x, \kappa-\omega_n), n=1,2,\cdots,N-1$}, of the far-field pattern.
\noindent Denoting by  $\left\{ ( {\tilde \lambda^{( x)}_n}, \psi^{(x)}_n): n=1,2,\cdots,N \right\}$ an eigen-system of the matrix $F^{(\hat x)}$ (\ref{matF}), then one deduces that  an eigen-system of the matrix $(F^{(\hat x)})_\#:= |\real (F^{(\hat x)})|+|\ima(F^{(\hat x)})|$ is $\left\{ ( \lambda^{(\hat x)}_n, \psi^{(\hat x)}_n): n=1,2,\cdots,N \right\}$, where $ \lambda^{(\hat x)}_n:=|\real (\tilde \lambda^{(\hat x)}_n)| +|\ima (\tilde \lambda^{( \hat x)}_n)|$.   In another way,  we may also use the \textcolor{ghx}{SPD} discrete approximation $F^{(\hat x)}_\#$.
For any test point $y\in \R^3$, the test function $\phi_{y,\eta}^{(\hat x)}$  (\ref{Test}) can be discretized as
\be \label{testn}
\phi_{y,\eta}^{(\hat x)}:= \left( e^{-i\tau_1(c^{-1}\hat x\cdot y-\eta)}, e^{-i\tau_2(c^{-1}\hat x\cdot y-\eta)},..., e^{-i\tau_N (c^{-1}\hat x \cdot y-\eta)}  \right).
\en
We approximate the auxiliary indicator function $I_{\eta}^{(\hat{x})}(y)$   (\ref{indicator4}) for reconstructing the strip $K_{D,\eta}^{(\hat{x})}$  by
\be\label{indicator5}
I_{\eta}^{(\hat{x})}(y)\sim \sum_{n=1}^N\frac{| \phi^{(\hat{x})}_{y,\eta} \cdot \psi_n^{(\hat{x})}|^2}{ |\lambda_n^{(\hat{x})}|}, \qquad y\in \R^3,
\en
where $\cdot$ denotes the inner product in $\R^3$ and $N$ is consistent with the dimension of the Toeplitz matrix \eqref{matF}.
Furthermore, an approximation of the indicator function $W^{(\hat x)}_\eta(y)$ in (\ref{indicator1}) for reconstructing the strip $K_{D,\eta}^{(\hat{x})} \cap K_{D,\eta}^{(-\hat{x})}$  is
\begin{equation}\label{indicator6}
W_{\eta}^{(\hat{x})}(y):=\left[I_{\eta}^{(\hat{x})}(y) +I_{\eta}^{(-\hat{x})}(y) \right]^{-1}\sim
\left[\sum_{n=1}^N\frac{| \phi^{(\hat{x})}_{y,\eta} \cdot \psi_n^{(\hat{x})} |^2}{ |\lambda_n^{(\hat{x})}|} + \frac{|\phi^{(-\hat{x})}_{y,\eta} \cdot \psi_n^{(-\hat{x}} )|^2}{ |\lambda_n^{(-\hat{x})}|} \right]^{-1}.
\end{equation}

%\begin{equation} \label{indicator6}
%W^{(\hat{x})}(y):=\left[I_{\eta}^{(\hat{x})}(y) +I_{\eta}^{(-\hat{x})}(y) \right]^{-1}\sim
%\left[\sum_{n=1}^N\frac{| \phi^{(\hat{x})}_{y,\eta} \cdot \psi_n^{(\hat{x})}|^2}{ |\lambda_n^{(\hat{x})}|} + \sum_{n=1}^N\frac{| \phi^{(-\hat{x})}_{y,\eta} \cdot \psi_n^{(-\hat{x})}|^2}{ |\lambda_n^{(-\hat{x})}|} \right]^{-1}.
%\end{equation}

\noindent The indicator function  for reconstructing the support of the source  with multiple observation directions can be approximated as
\begin{equation} \label{indicator7}
I(y){= \left[\sum_{m=1}^M \frac{1}{I^{(\hat{x}_m)}(y) }\right]^{-1}}\sim \left[\sum_{m=1}^M\sum_{n=1}^N\frac{| \phi^{(\hat{x}_m)}_{y,t_0} \cdot \psi_n^{(\hat{x}_m)} |^2}{ |\lambda_n^{(\hat{x}_m)}|} + \frac{| \phi^{(-\hat{x}_m)}_{y,t_0} \cdot \psi_n^{(-\hat{x}_m)} |^2}{ |\lambda_n^{(-\hat{x}_m)}|} \right]^{-1}, \; y\in \R^3.
\end{equation}

\noindent Suppose $k_{\min}=0$ for the sake of simplicity, then
the bandwidth of frequencies can be extended from $(0,k_{\max})$ to $(-k_{\max}, k_{\max})$ by $u_j^{\infty}(\hat x, -k)=\overline{u_j^{\infty}(\hat x, k)}$. Thus, one deduces from these new measurement data with $k_{\min}=-k_{\max}$ that $\kappa=0$ and $K=k_{\max}$.  The frequency band is represented by the interval $(0, 3\pi)$ and discretized by  $N=48$ and $\Delta k=\pi/16$. We also assume the source function  $S(x-a(t))=2 (x_1-a_1(t))+3 (x_2-a_2(t))^2+ (x_1-a_1(t)) (x_2-a_2(t))^2+1$ which  has supports for $j=1,...,J$, unless other stated.  In subsections 4.1 and 4.2, we just take pulse moment $t_0$ as an example.

\subsection{Reconstruction of the pulse moment $t_0$}

Firstly, we consider reconstructions of the strip $K_{D,\eta}^{(\hat x)}$ (as defined in $(\ref{tildeK})$) perpendicular to the observation direction by plotting the indicator function $1/I_\eta^{(\hat x)}(y)$. The strip
$K_{D,\eta}^{(\hat x)}$ can be regarded as the result of translating $K_{D}^{(\hat x)}$  along the observational direction $\hat x$ by $\eta-t_0$ units. $K_{D}^{(\hat x)}$ is the smallest strip perpendicular to the observation  direction and containing the source support. It is supposed that the source is supported in $|x|\leq1$ and excited at the moment $t_0$. The testing domain is chosen as $[-6,6]\times[-6,6]$. In Figure \ref{fig:3-1}, we present reconstructions of $K_{D,\eta}^{(\hat x)}$ with various parameter $\eta=1,2,...,6$ from the measurement data of one observation direction $\hat x=(1,0)$ and the source is excited at the moment $t_0=4$. The shape of source support $D$ is highlighted by the pink solid line.
According to the definition of  $K_{D,\eta}^{(\hat x)}$ and $K_{D}^{(\hat x)}$, the two strips are completely identical for $t_0=\eta$.  We obtain  the smallest strip containing the source support and perpendicular to the observation direction for $\eta=t_0=4$ by plotting  $1/I_\eta^{(\hat x)}(y)$ in Figure \ref{fig:3-1}(d).  Figures \ref{fig:3-1}(a)-(c)and (e)-(f) demonstrate that $K_{D,\eta}^{(\hat x)}$  has been shifted by $\eta-t_0=-3,-2,-1,1,2$ units, respectively, along the observation direction compared to $K_{D}^{(\hat x)}$.  All of these confirm the correctness of the theory.

\begin{figure}[H]
\centering
\subfigure[$\eta=1$]{
\includegraphics[scale=0.22]{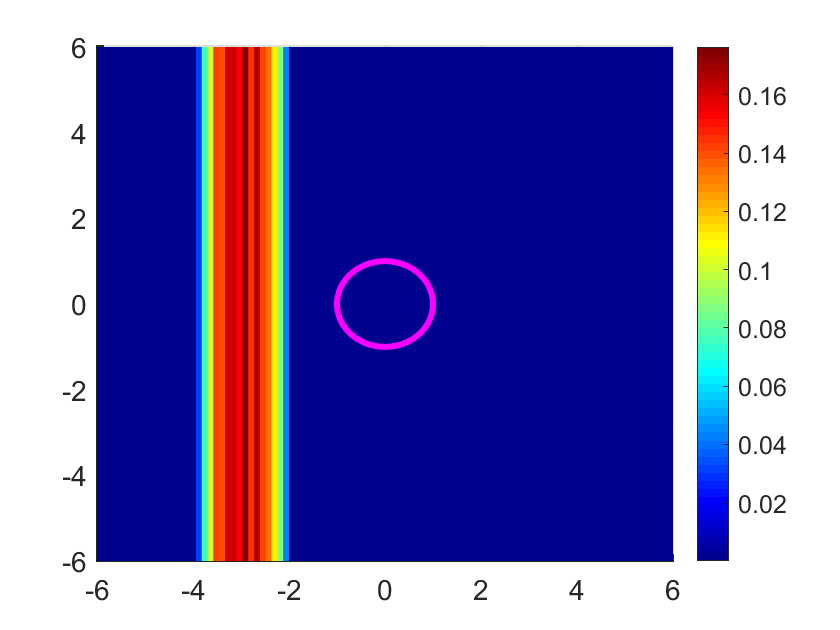}
}
\subfigure[$\eta=2$ ]{
\includegraphics[scale=0.22]{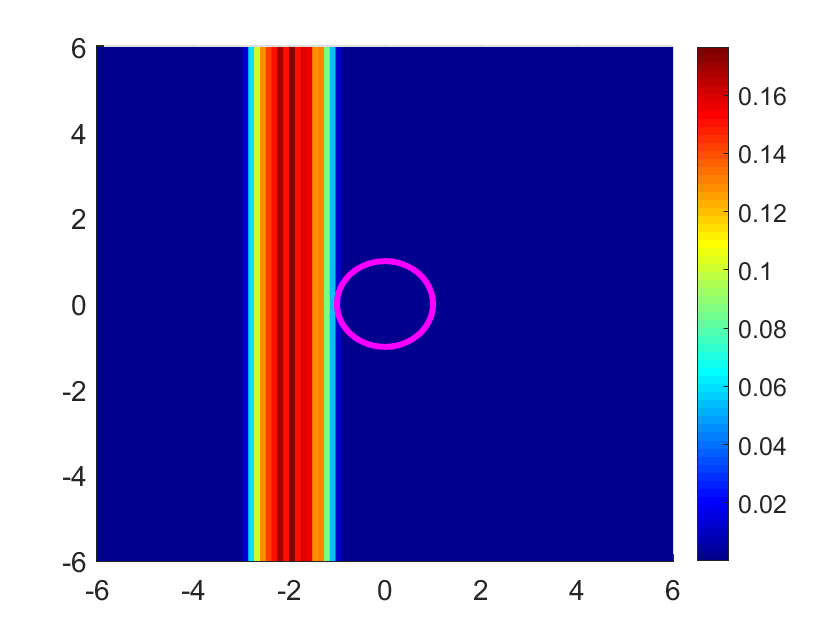}

}
\subfigure[$\eta=3$]{
\includegraphics[scale=0.22]{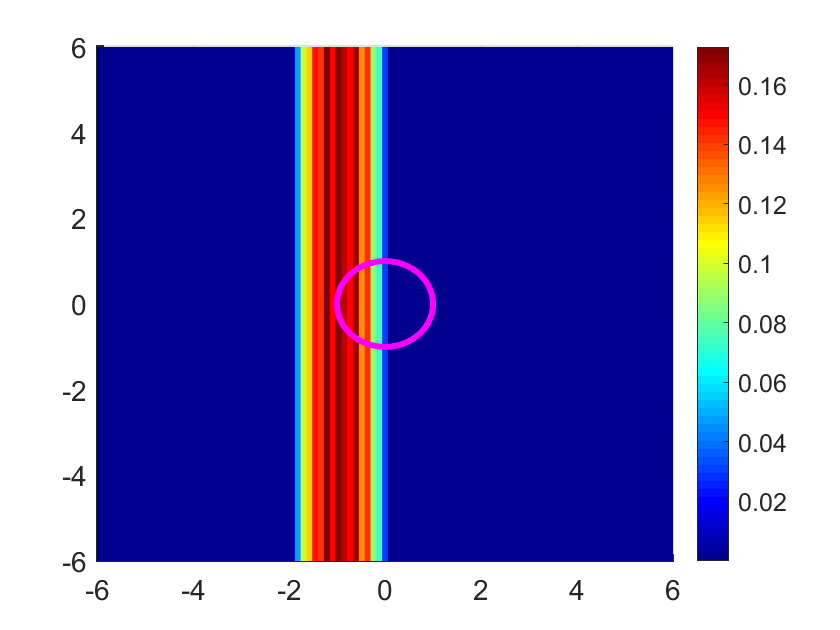}

}
\subfigure[$\eta=4$]{
\includegraphics[scale=0.22]{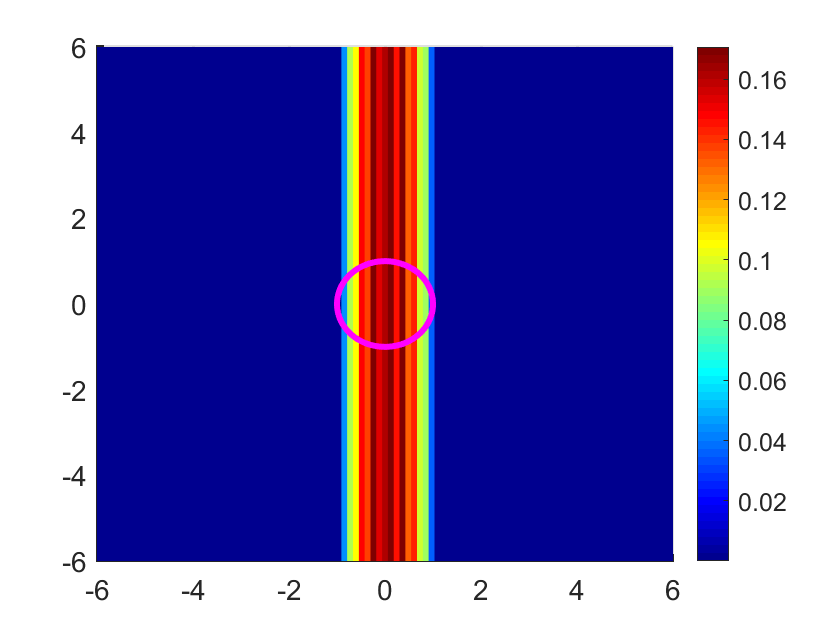}
}
\subfigure[$\eta=5$ ]{
\includegraphics[scale=0.22]{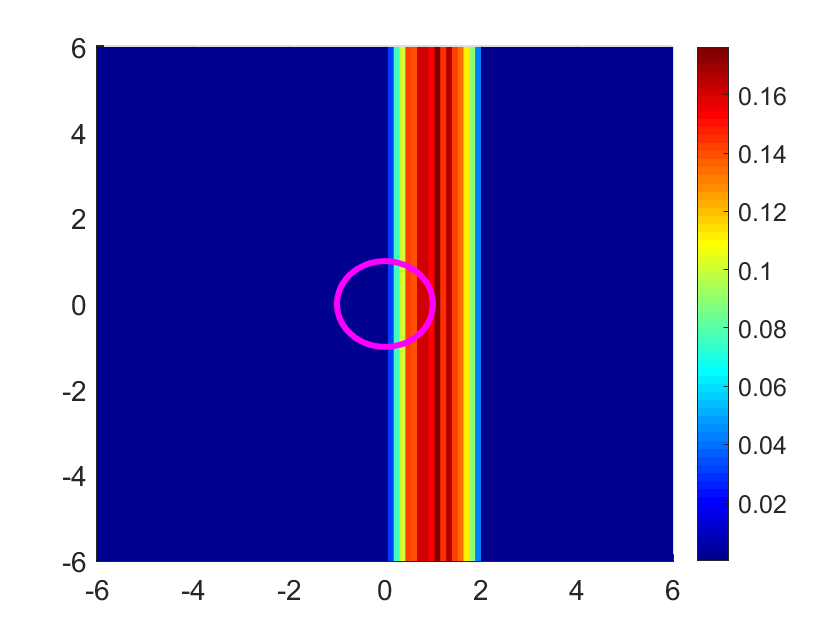}

}
\subfigure[$\eta=6$]{
\includegraphics[scale=0.22]{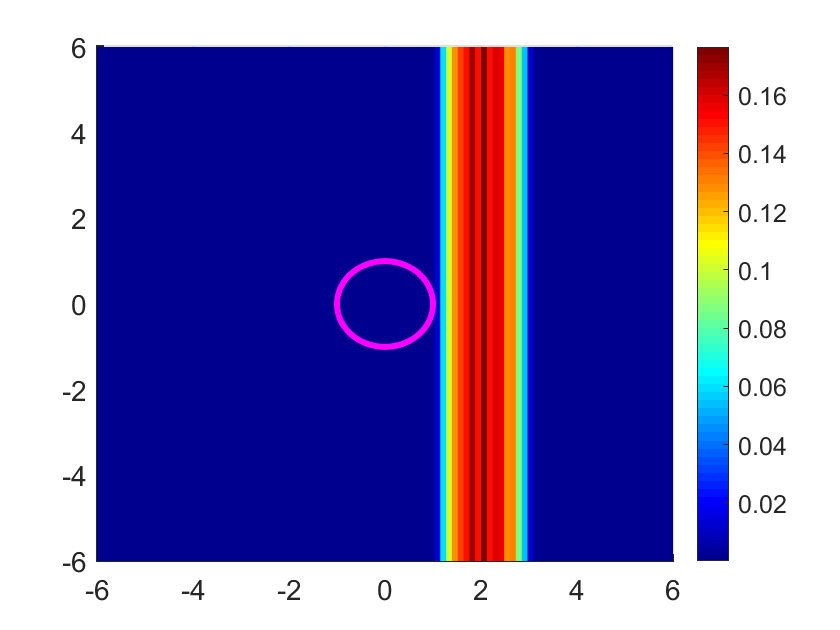}

}
\caption{Reconstruction for $K_{D,\eta}^{(-\hat{x})}$ using multi-frequency far-field data from a single observation direction $\hat x=(1,0)$ with  the auxiliary indicator function $1/I_\eta^{(\hat x)}(y)$. Various $\eta$ are tested and the pulse moment is set at $t_0=4$.
} \label{fig:3-1}
\end{figure}

\noindent We proceed with the source term previously discussed in Figure \ref{fig:3-1}. However, the main object is to reconstruct $K_{D,\eta}^{(\hat{x})} \cap K_{D,\eta}^{(-\hat{x})}$ using multi-frequency far-field data from a pair of observation directions $\hat x=(\pm1,0)$ with  the  indicator function $W_\eta^{(\hat x)}(y)$ defined in \eqref{indicator6} in Figure \ref{fig:3-2}. We test various parameter $\eta$. When  we take $\eta=t_0=4$ in Figure \ref{fig:3-2}(d), the reconstruction illustrate $K_{D,\eta}^{(\hat{x})} \cap K_{D,\eta}^{(-\hat{x})}=K_{D}^{(\hat{x})}$ as proven by our theory. As $\eta=2$, we have  $K_{D,\eta}^{(\hat{x})} \cap K_{D,\eta}^{(-\hat{x})}=\emptyset$. Numerical results in Figure \ref{fig:3-2}(a) indicates that the corresponding indicator
values are consistently much smaller than $10^{-9}$,  which implies that it is not possible to reconstruct partial or whole information  of the  source support. The width of $K_{D,\eta}^{(\hat{x})} \cap K_{D,\eta}^{(-\hat{x})}$ becomes a line at $x=0$ in theory for $\eta=3$ or $5$. The reconstruction results shown in Figures \ref{fig:3-2}(b) and (f) respectively display a straight line at $x=0$. Similarly, the width of $K_{D,\eta}^{(\hat{x})} \cap K_{D,\eta}^{(-\hat{x})}$ equals $1$ theoretically as  $\eta=3.5$ or $5.5$. The numerical results in  Figures \ref{fig:3-2}(c) and (e) further corroborate the accuracy of the theoretical findings.

\begin{figure}[H]
\centering
\subfigure[$\eta=2$]{
\includegraphics[scale=0.22]{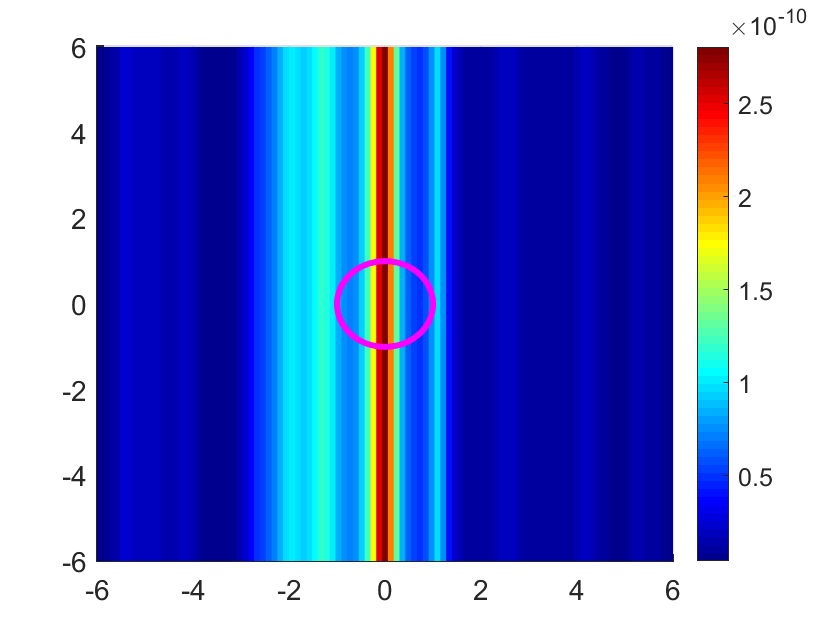}
}
\subfigure[$\eta=3$ ]{
\includegraphics[scale=0.22]{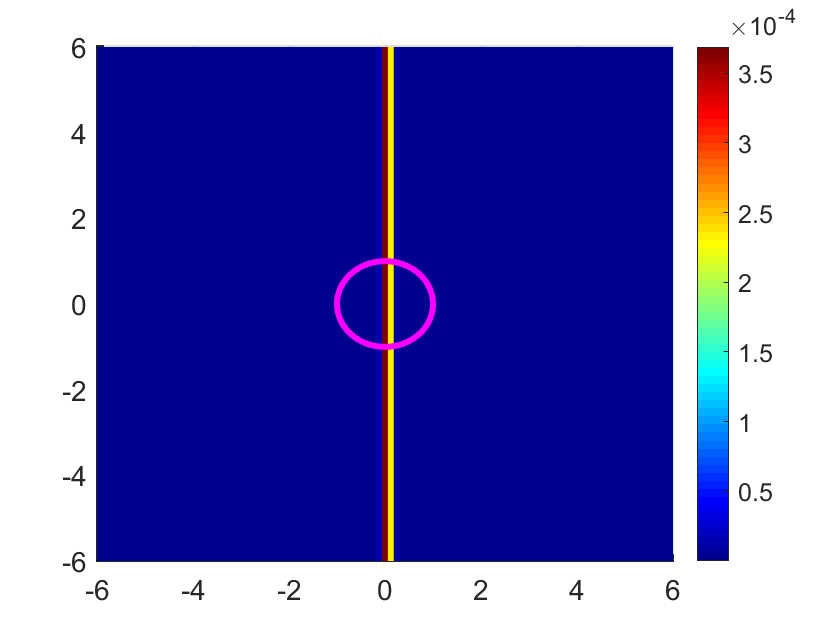}

}
\subfigure[$\eta=3.5$]{
\includegraphics[scale=0.22]{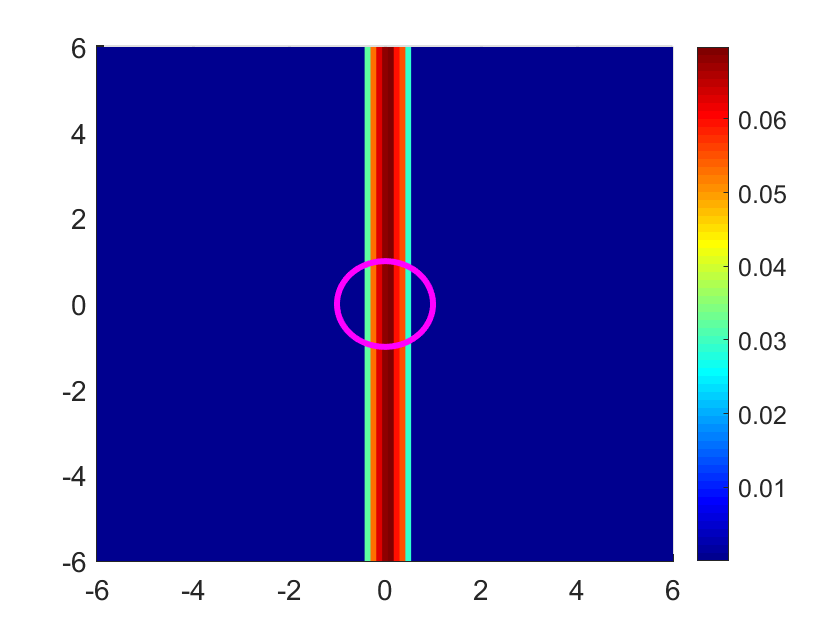}

}
\subfigure[$\eta=4$]{
\includegraphics[scale=0.22]{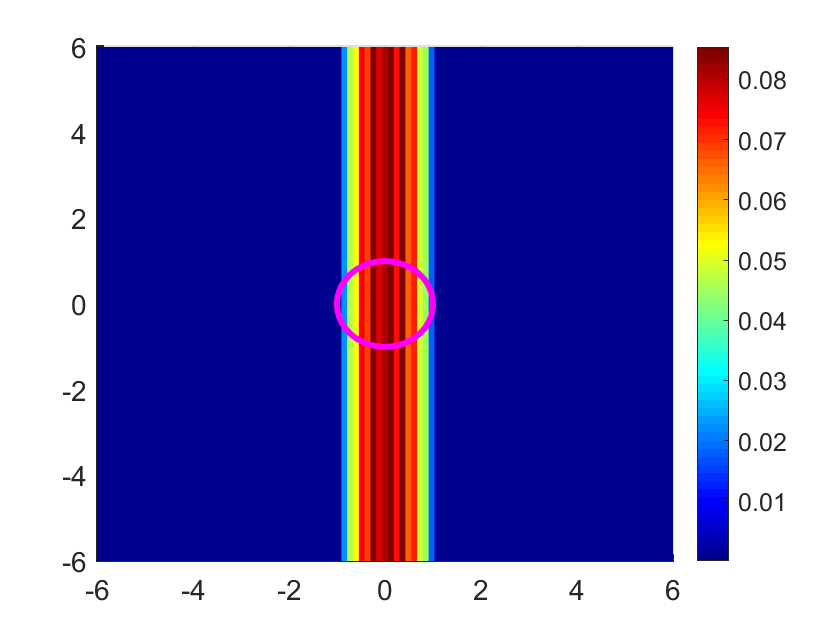}
}
\subfigure[$\eta=4.5$ ]{
\includegraphics[scale=0.22]{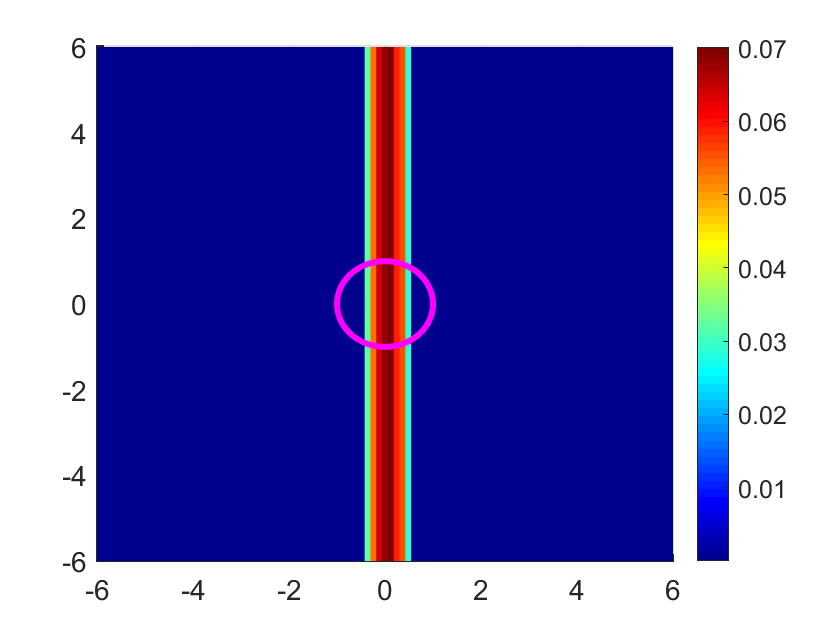}

}
\subfigure[$\eta=5$]{
\includegraphics[scale=0.22]{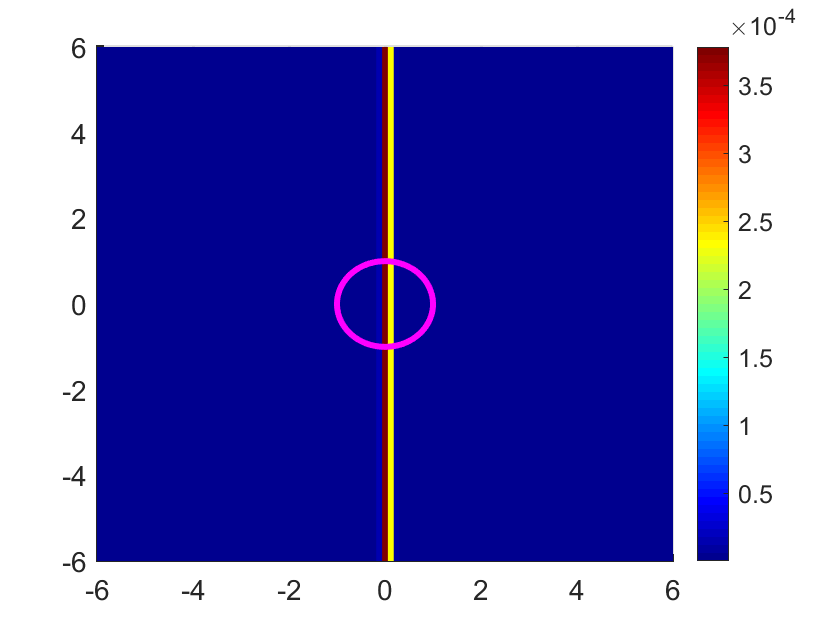}

}
\caption{Reconstruction for $K_{D,\eta}^{(\hat{x})} \cap K_{D,\eta}^{(-\hat{x})}$  using multi-frequency far-field data from a pair of observation directions $\hat x=(\pm1,0)$ with  the  indicator function $W_\eta^{(\hat x)}(y)$. Various $\eta$ are tested and the pulse moment is $t_0=4$.
} \label{fig:3-2}
\end{figure}

\noindent Next, we aim to determine the impulse moment $t_0$ using multi-frequency far-field data from a pair of observation directions $\hat x=(\pm1,0)$ by plotting the one dimensional function $\eta\rightarrow h^{(\hat x)}(\eta)=\max_{y\in B_R}W_{\eta}^{(\hat{x})}(y)$. The source in Figure \ref{fig:3-2} is still considered. The pulse moment $t_0$ is set to be $2,4,6$, respectively, in Figure \ref{fig:3-3}. According to Theorem \ref{Th:max}, we know that $h^{(\hat x)}(\eta)\geq0$ for $\eta\in[\eta_1,\eta_2]$ and $h^{(\hat x)}(\eta)=0$ for $\eta\notin[\eta_1,\eta_2]$, then the impulse moment is $t_0=\frac{\eta_1+\eta_2}{2}$.  Figure \ref{fig:3-3}(a)  demonstrates  that $h^{(\hat x)}(\eta)\geq0$ for $\eta\in[1,3]$ and $^{(\hat x)}(\eta)=0$ for $\eta\notin[1,3]$. Consequently, the numerical impulse moment is $t_0^{num}=2$, which is consistent with the actual impulse moment $t_0=2$. Similarly, Figure \ref{fig:3-3}(b) illustrates that  $h^{(\hat x)}(\eta)\geq0$ for $\eta\in[3,5]$ and $h^{(\hat x)}(\eta)=0$ for $\eta\notin[3,5]$. Therefore, the numerical impulse moment is $t_0^{num}=4$, aligning with the actual impulse moment $t_0=4$.  Lastly, Figure \ref{fig:3-3}(c) shows  that $h^{(\hat x)}(\eta)\geq0$ for $\eta\in[5,7]$ and $h^{(\hat x)}(\eta)=0$ for $\eta\notin[5,7]$. Thus, the numerical impulse moment  is $t_0^{num}=4$, which matches the actual impulse moment $t_0=6$. The aforementioned numerical results demonstrate the validity of the theoretical findings.  The lack of precision near $\eta_1$ and $\eta_2$ is due to the insufficient granularity in the discretization of $\eta$ during numerical computations. Therefore, the numerical results can be improved by increasing the resolution of the \(\eta\) discretization.

\begin{figure}[H]
\centering
\subfigure[$t_0=2$]{
\includegraphics[scale=0.22]{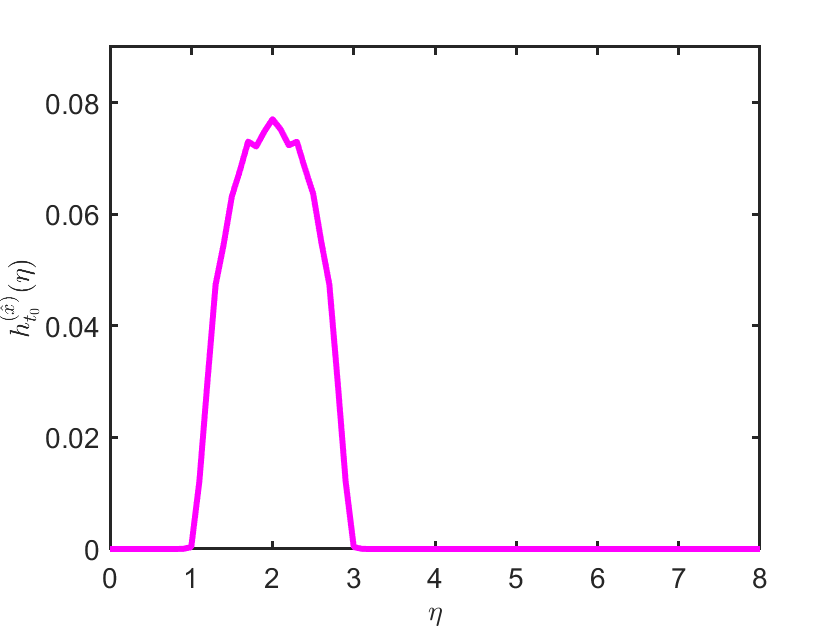}
}
\subfigure[$t_0=4$ ]{
\includegraphics[scale=0.22]{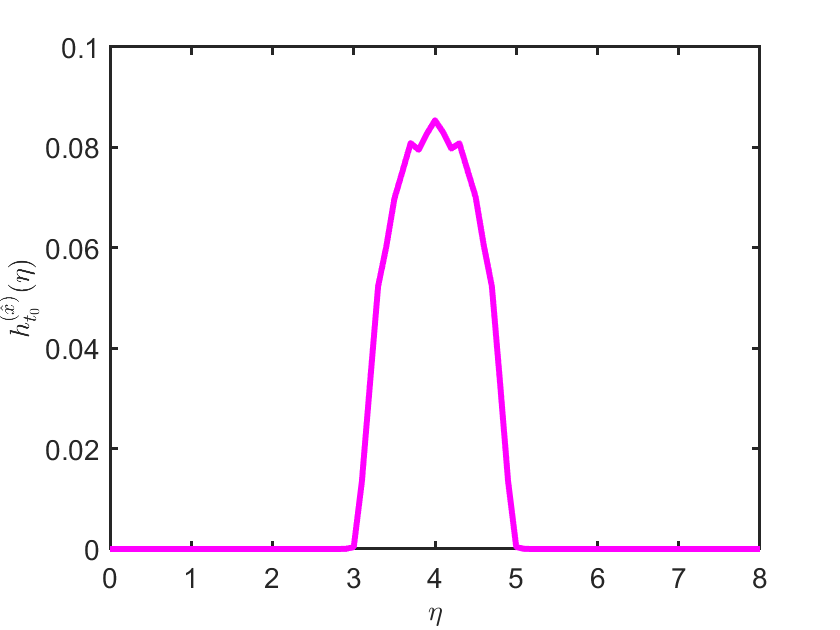}

}
\subfigure[$t_0=6$]{
\includegraphics[scale=0.22]{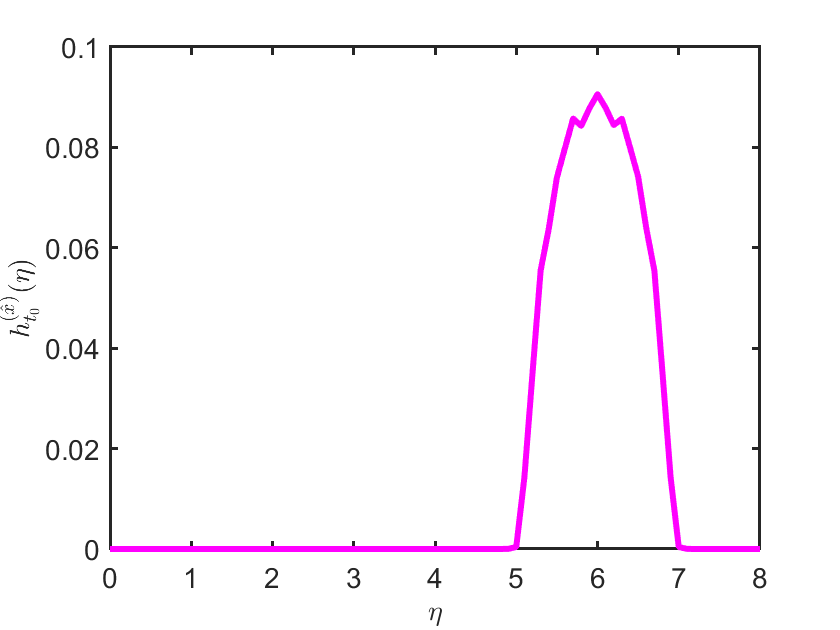}

}
\caption{Determination of the pulse moment $t_0$  using multi-frequency far-field data from a pair of observation directions $\hat x=(\pm1,0)$ with  the  function $h^{(\hat x)}(\eta)$.  The pulse moment $t_0$ is set to be $2,4,6$, respectively.
} \label{fig:3-3}
\end{figure}

To further validate the effectiveness of our proposed algorithm, we tested its robustness in reconstructing  pulse moments across various observation directions.  We reconstruct  $K_{D,\eta}^{(\hat{x})} \cap K_{D,\eta}^{(-\hat{x})}$  using multi-frequency far-field data from a pair of observation directions $\hat x=(\pm\frac{\sqrt{2}}{2}, \pm\frac{\sqrt{2}}{2})$ with  the  indicator function $W_\eta^{(\hat x)}(y)$. Various $\eta$ is tested and the pulse moment is $t_0=4$.
As illustrated in Figure \ref{fig:3-2}, in Figure \ref{fig:3-4} we employed different observation directions and obtained numerical results consistent with those in Figure \ref{fig:3-2}. Specifically, when $\eta=t_0=4$, we achieved $K_{D,\eta}^{(\hat{x})} \cap K_{D,\eta}^{(-\hat{x})}=K_{D}^{(\hat{x})}$ . For $3\leq\eta\leq5$, we obtained an even narrower strip. In other cases, the numerical results are nearly zero in the test domain.
We aim to determine the pulse moment $t_0$  using multi-frequency far-field data from a pair of observation directions $\hat x=(\pm\frac{\sqrt{2}}{2}, \pm\frac{\sqrt{2}}{2})$ with  the  function $h^{(\hat x)}(\eta)$ in Figure \ref{fig:3-5}.  The pulse moment $t_0$ is set to be $2,4,6$, respectively. The intervals $[\eta_1,\eta_2]$ such that $h^{(\hat x)}(\eta)\geq0$ are $[1,3], [3,5], [5,7]$ for $t_0=2,4,6$. Naturally, we obtain $t_0^{num}=2,4,6$. This illustrates that the algorithm proved to be effective for data collected from any arbitrary observation direction.

\begin{figure}[H]
\centering
\subfigure[$\eta=2$]{
\includegraphics[scale=0.22]{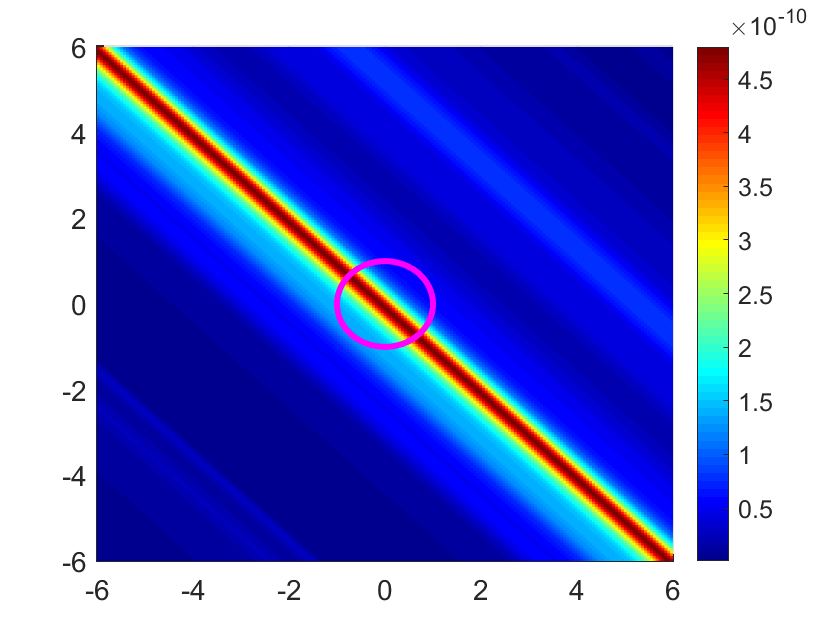}
}
\subfigure[$\eta=3$ ]{
\includegraphics[scale=0.22]{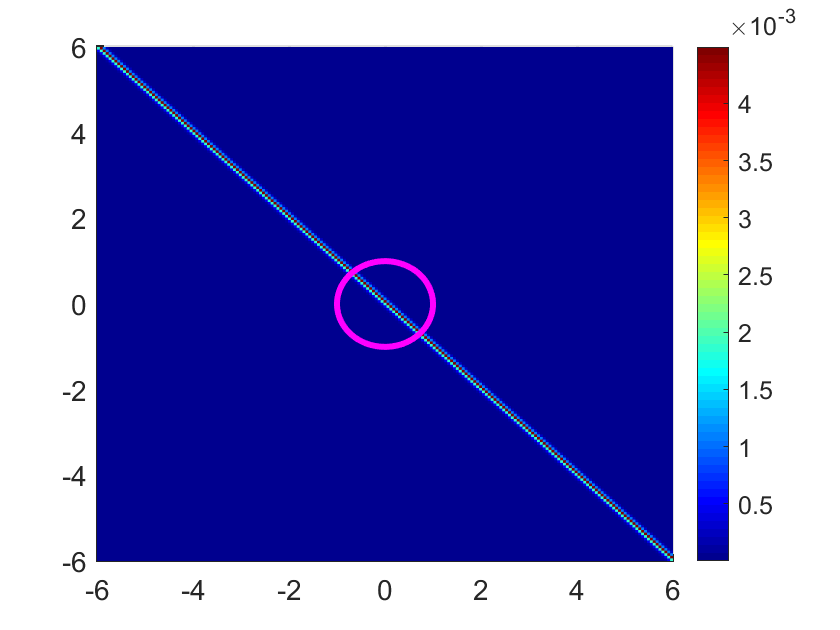}

}
\subfigure[$\eta=3.5$]{
\includegraphics[scale=0.22]{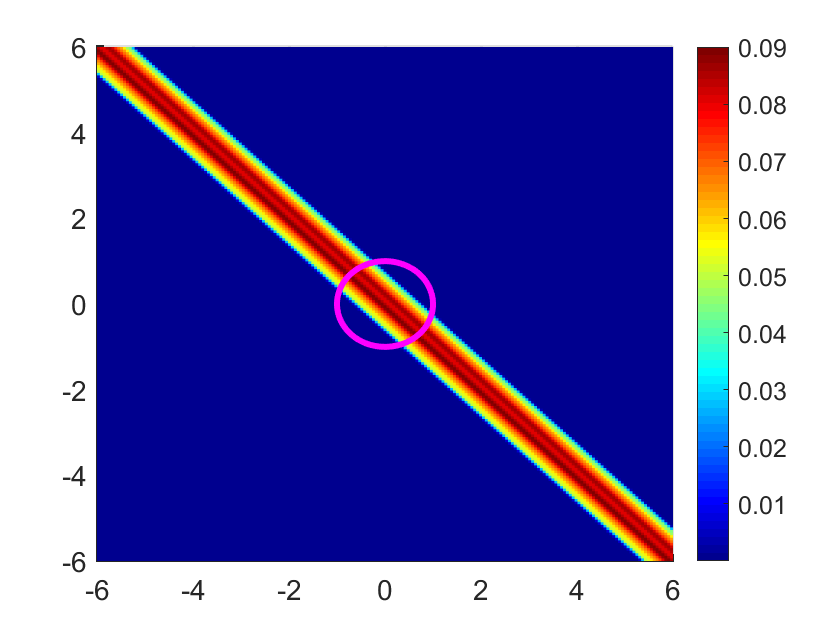}

}
\subfigure[$\eta=4$]{
\includegraphics[scale=0.22]{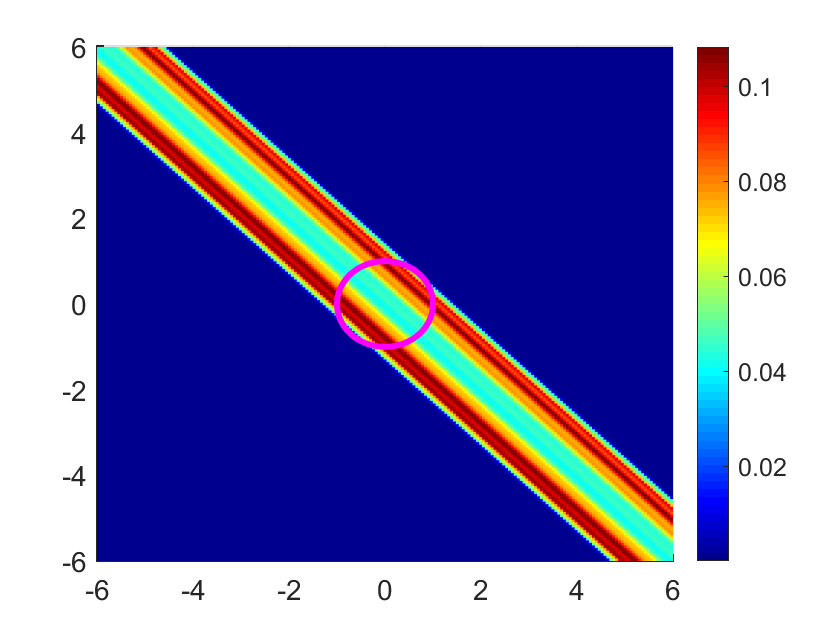}
}
\subfigure[$\eta=4.5$ ]{
\includegraphics[scale=0.22]{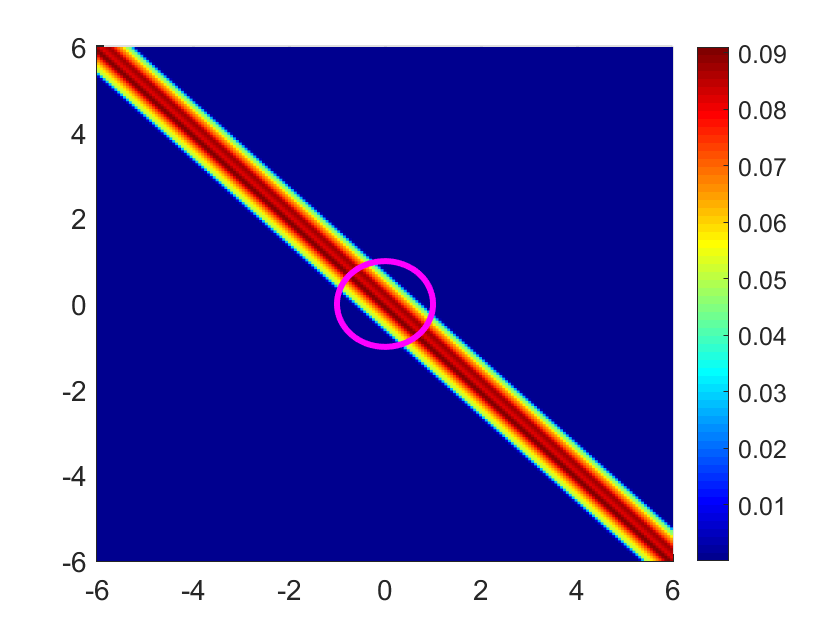}

}
\subfigure[$\eta=5$]{
\includegraphics[scale=0.22]{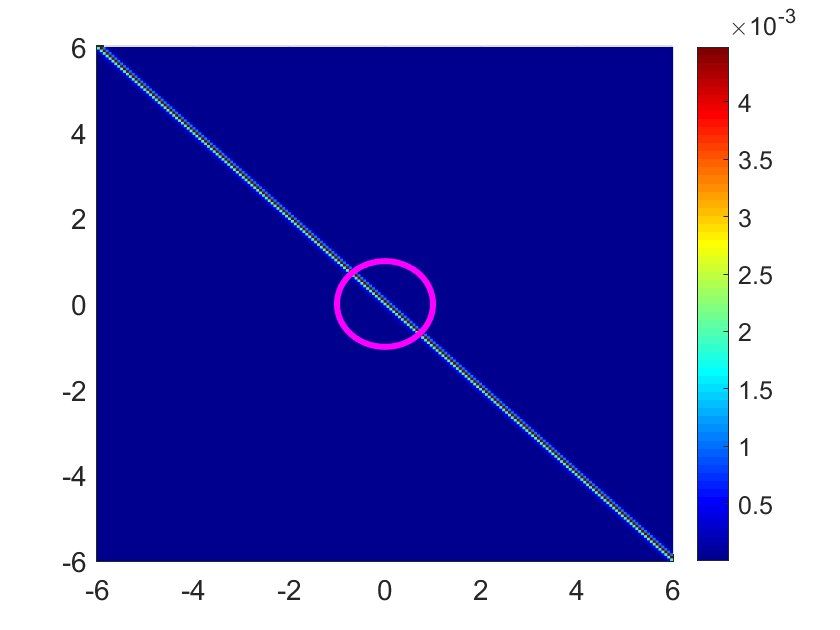}

}
\caption{Reconstruction for $K_{D,\eta}^{(\hat{x})} \cap K_{D,\eta}^{(-\hat{x})}$  using multi-frequency far-field data from a pair of observation directions $\hat x=(\pm\frac{\sqrt{2}}{2}, \pm\frac{\sqrt{2}}{2})$ with  the  indicator function $W_\eta^{(\hat x)}(y)$. Various $\eta$ are tested and the pulse moment is $t_0=4$.
} \label{fig:3-4}
\end{figure}

\begin{figure}[H]
\centering
\subfigure[$t_0=2$]{
\includegraphics[scale=0.22]{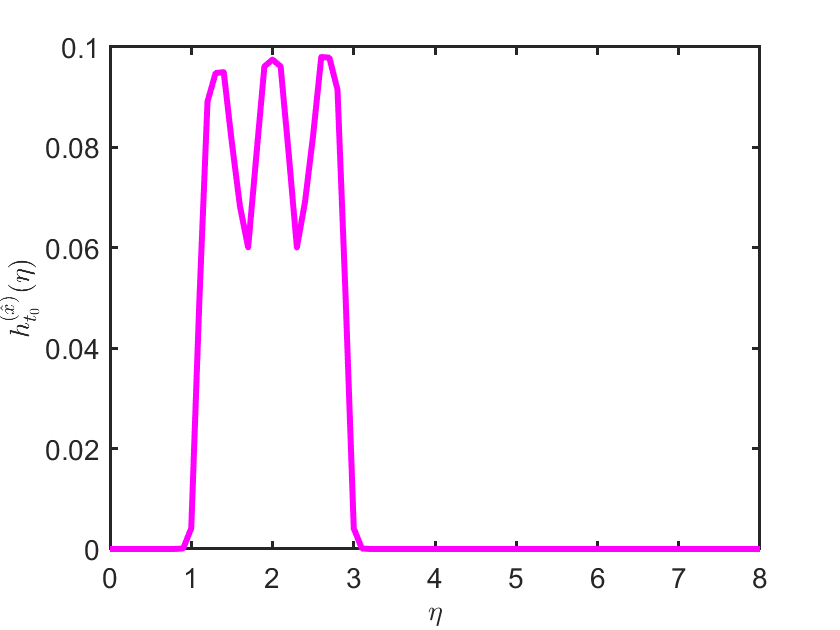}
}
\subfigure[$t_0=4$ ]{
\includegraphics[scale=0.22]{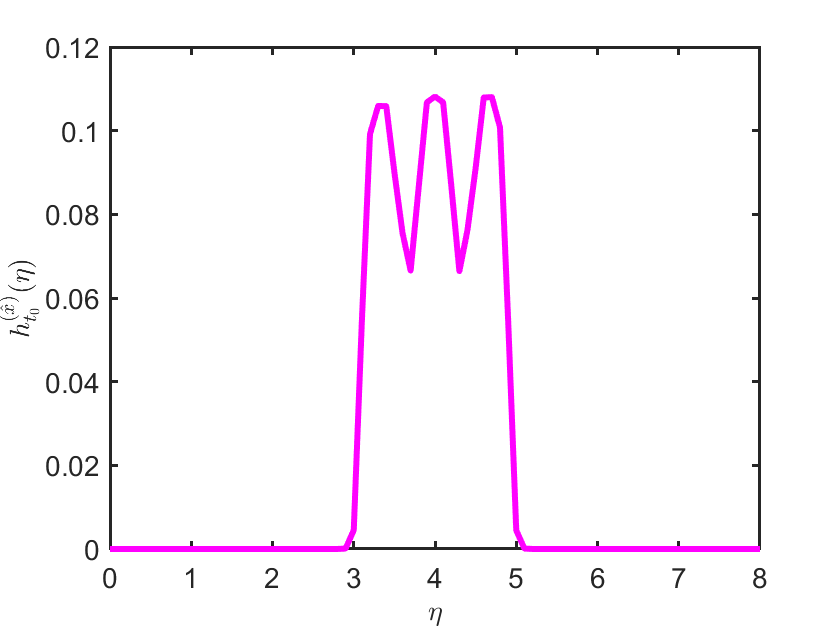}

}
\subfigure[$t_0=6$]{
\includegraphics[scale=0.22]{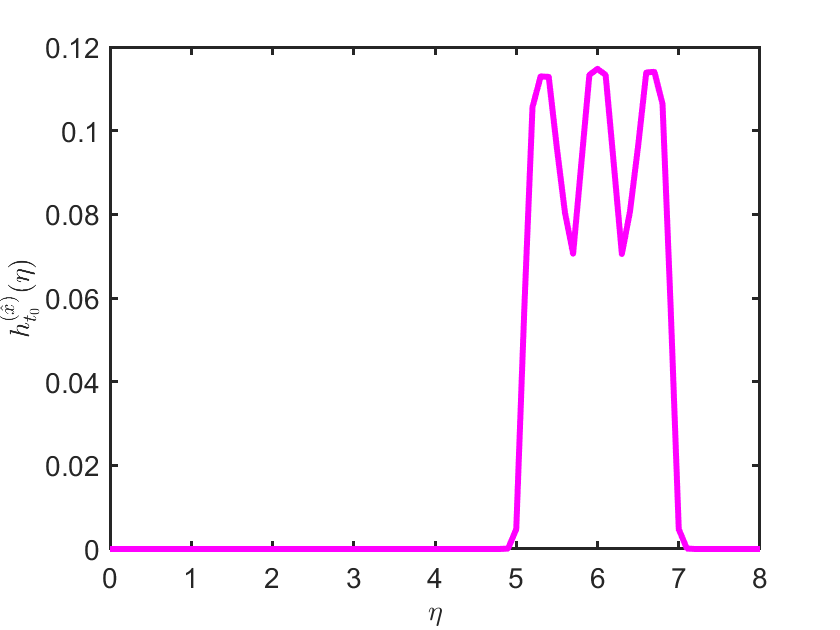}

}
\caption{Determination of the pulse moment $t_0$  using multi-frequency far-field data from a pair of observation directions $\hat x=(\pm\frac{\sqrt{2}}{2}, \pm\frac{\sqrt{2}}{2})$ with  the  function $h^{(\hat x)}(\eta)$.  The pulse moment $t_0$ is set to be $2,4,6$, respectively.
} \label{fig:3-5}
\end{figure}

\subsection{Reconstruction of the source  location and shape at time $t_0$}

Once the pulse moment  of the source has been reconstructed using multi-frequency far-field data from any observation direction, then we can set $t_0=\eta$ in the test function. According to Theorems \ref{Th:kd} and \ref{TH:hull}, by employing measurement data from one single or multiple direction(s), one can identify the strip $K_{D}^{(\hat{x})}$ and $\Theta$-convex hull of the source support that means the position and shape of the source at the pulse moment.  In the examples below, we consider the reconstructions of source supports with different shapes, each excited at different times. Specifically, we analyze a circular support excited at time $t_0=3$, a kite-shaped support excited at time $t_0=4$, and a rounded square support excited at time $t_0=5$. Figure \ref{fig:3-7} presents the determination of the pulse moment $t_0$  using multi-frequency far-field data from a pair of observation directions $\hat x=(0, \pm1)$ by plotting   the function $h^{(\hat x)}(y)$.  Figure \ref{fig:3-7} illustrates the non-zero intervals $[\eta_1,\eta_2]$ of the function $\eta \rightarrow h^{(\hat x)}(y)$ for different excitation times. This allows us to accurately determine the excitation times  $t_0^{num}=\frac{\eta_1+\eta_2}{2}$=3,4,5.  We show reconstructions using multi-frequency far-field data from $8$ observation directions for a circular, a kite-shaped and a round-square-shaped support with different pulse moment by plotting the indicator function  \eqref{indicator7} in Figure \ref{fig:3-6}. The shape of source support $D$ are highlighted by the pink solid line.  It is evident that both the shapes and locations are accurately reconstructed.

\begin{figure}[H]
\centering
\subfigure[$t_0=3$]{
\includegraphics[scale=0.22]{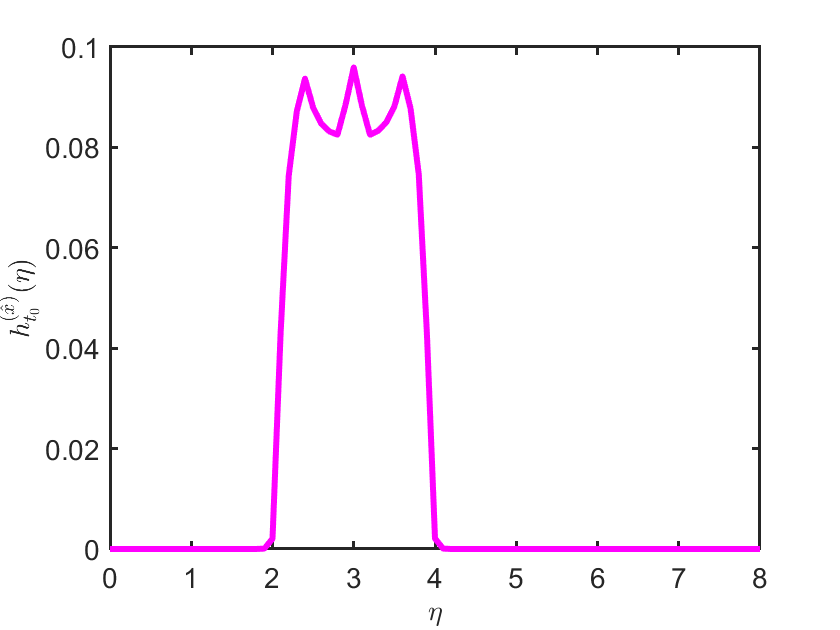}
}
\subfigure[$t_0=4$ ]{
\includegraphics[scale=0.22]{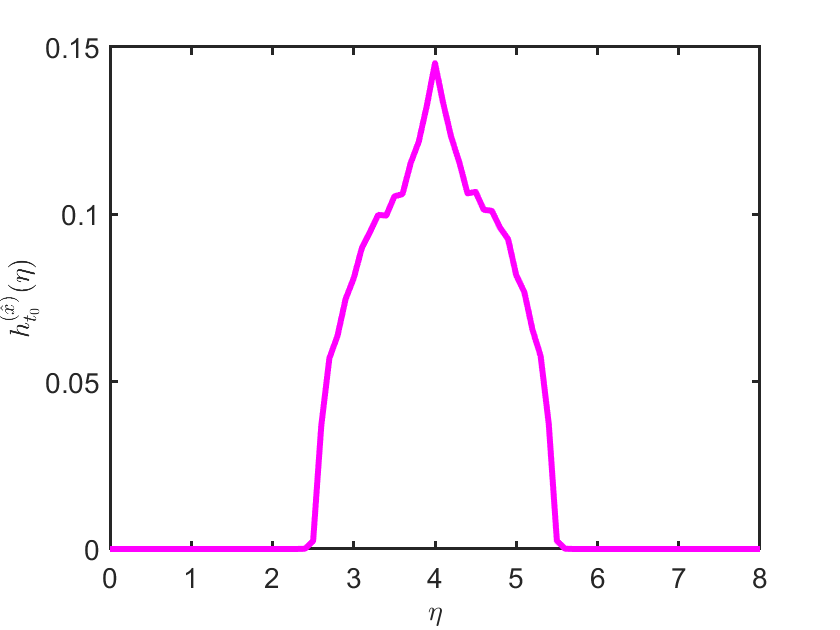}

}
\subfigure[$t_0=5$]{
\includegraphics[scale=0.22]{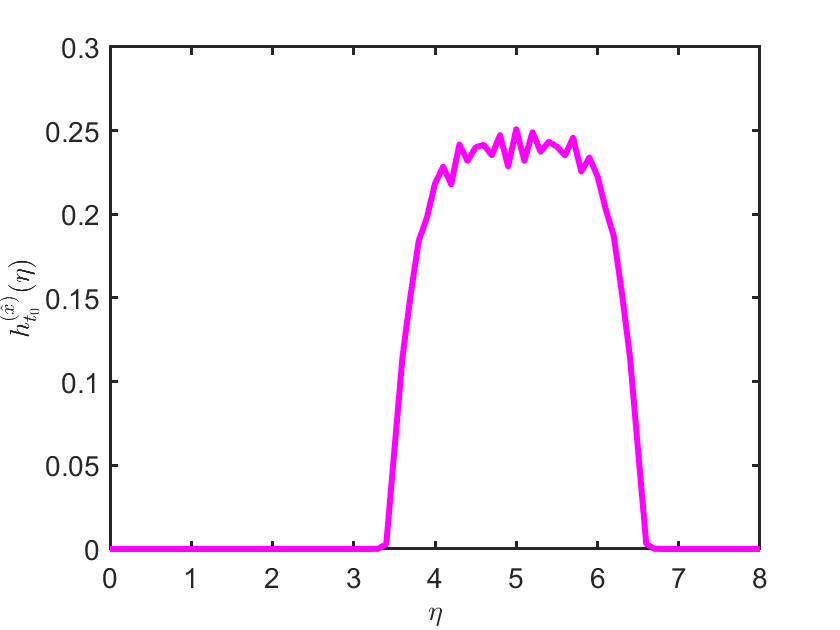}

}
\caption{Determination of the pulse moment $t_0$  using multi-frequency far-field data from a pair of observation directions $\hat x=(0, \pm1)$ with  the  function $h^{(\hat x)}$.  The pulse moment $t_0$ is set to be $3,4,5$ for a circular support,a kite-shaped support and a round-square-shaped support, respectively.
} \label{fig:3-7}
\end{figure}

\begin{figure}[H]
\centering
\subfigure[$t_0=3$]{
\includegraphics[scale=0.22]{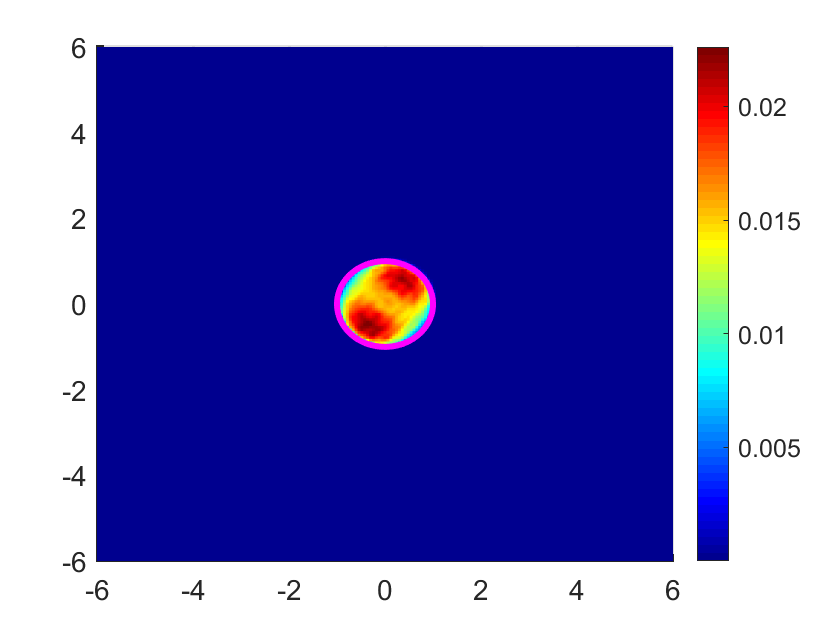}
}
\subfigure[$t_0=4$ ]{
\includegraphics[scale=0.22]{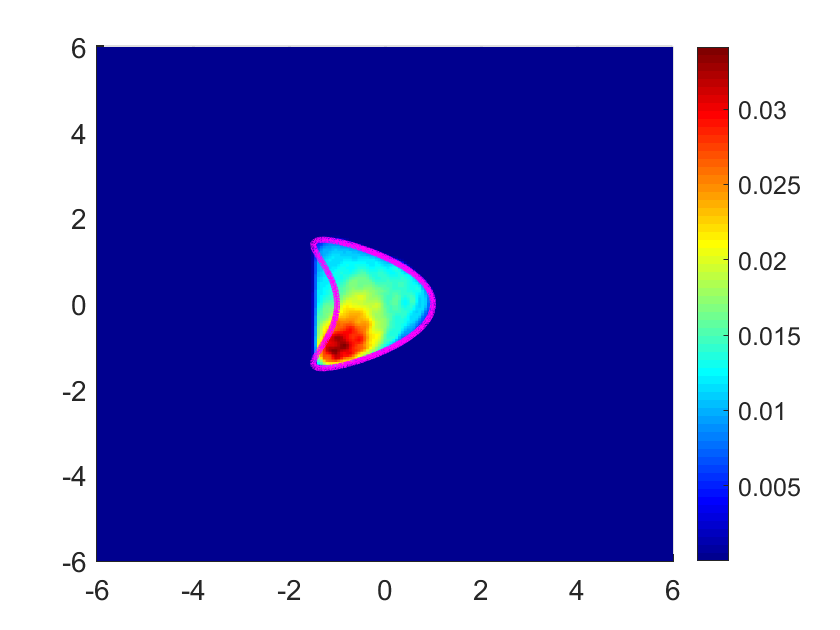}

}
\subfigure[$t_0=5$]{
\includegraphics[scale=0.22]{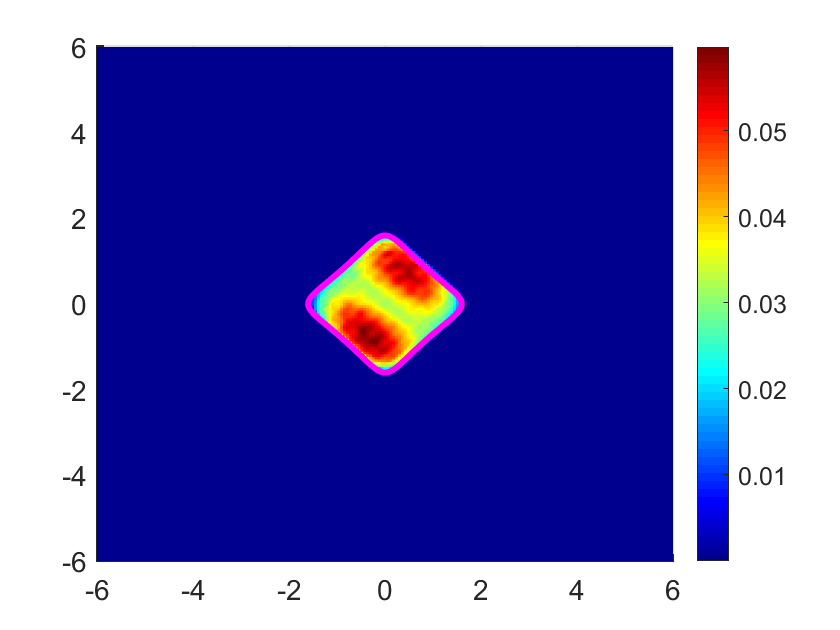}

}
\caption{Reconstructions using multi-frequency far-field data from $8$ observation directions for a circular, a kite-shaped and a round-square-shaped support with different pulse moments.
} \label{fig:3-6}
\end{figure}

\subsection{Reconstruction of the trajectory of a moving source }

In this section, we focus on the reconstruction of the trajectory of a moving extended source.  By utilizing multi-frequency far-field  data collected from finite observation directions, we aim to accurately determine the trajectory followed by the potential over time. To enhance imaging efficacy, the indicator function values in this subsection are normalized to their corresponding maximum values. We mainly consider three distinct trajectories for the moving potential: the cardioid, the trifolium rose curve, and the star curve. These trajectories are parameterized as follows:
\begin{itemize}
	\item  $a_1(t)=8 \sin^3 \frac{\pi}{40}t $, $a_2(t)=6\cos \frac{\pi}{40}t- 2 \cos \frac{2\pi}{40}t- \cos \frac{3\pi}{40}t- 0.5 \cos \frac{4\pi}{40}t$, $t\in[0,80)$,
	\item   $a_1(t)=8\cos \frac{3\pi}{40}t \cos \frac{\pi}{40}t$, $a_2(t)=8\cos \frac{3\pi}{40}t  \sin \frac{\pi}{40}t $, $t\in[0,80)$,
	\item  $a_1(t)=7\cos^3 \frac{\pi}{40}t $, $a_2(t)=7\sin^3 \frac{\pi}{40} t$, $t\in[0,80)$.
\end{itemize}

\noindent The source supports are supposed to be $|x-a(t_j)| \leq 0.1$  for each impulse moment $t_j, j=0,2,...,J$. For the sake of simplicity we suppose that the source profile remain invariant along the time.
 The search region is chosen as $[-10,10]\times[-10,10]$. The multi-frequency data are collected from $8$ observation directions.
In Figure \ref{fig:3-8}(a), we undertake the reconstruction of the trajectory for the moving extended source, where each time point is represented by $t_j=2j, j=0,...,39$. Additionally, we display the original trajectory of the moving extended source $a(t_j), j=0,...,39$ in Figure \ref{fig:3-8}(d). It is evident that the cardioid trajectory is faithfully reconstructed. Similarly, we conducted reconstructions of the  trajectories for the  trifolium rose curve and star curve, depicted in Figures \ref{fig:3-8}(b) and(c), respectively. The original trajectories are showcased in Figures \ref{fig:3-8}(e) and(f). Notably, the reconstructions exhibit remarkable fidelity to the original trajectories, underscoring the efficacy of the reconstruction process.

\begin{figure}[H]
\centering
\subfigure[Cardioid ]{
\includegraphics[scale=0.22]{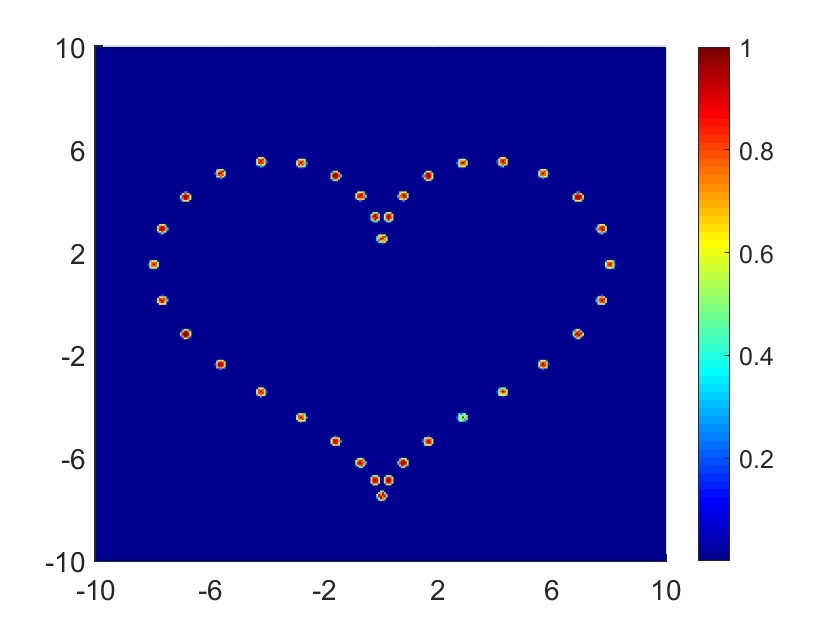}
}
\subfigure[Trifolium rose curve]{
\includegraphics[scale=0.22]{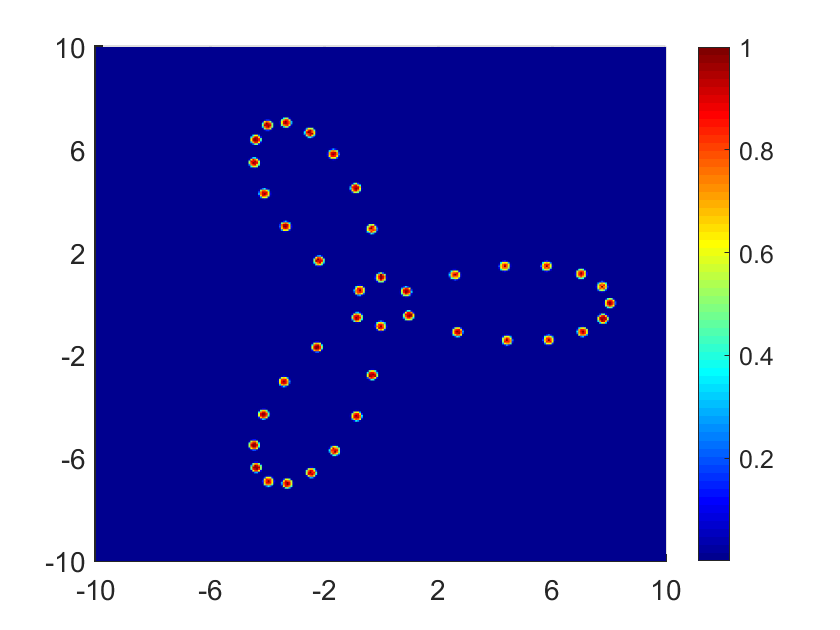}

}
\subfigure[Star Curve]{
\includegraphics[scale=0.22]{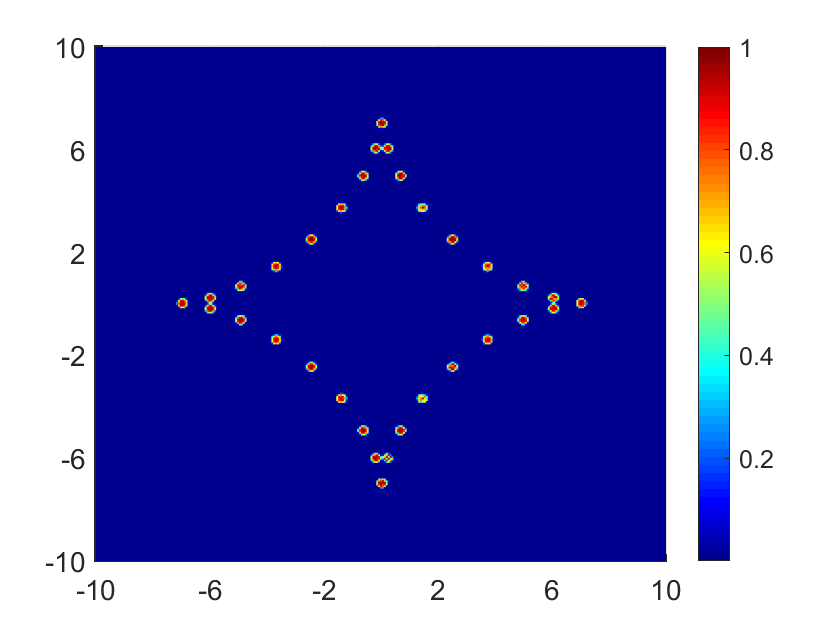}

}
\subfigure[Original cardioid ]{
\includegraphics[scale=0.22]{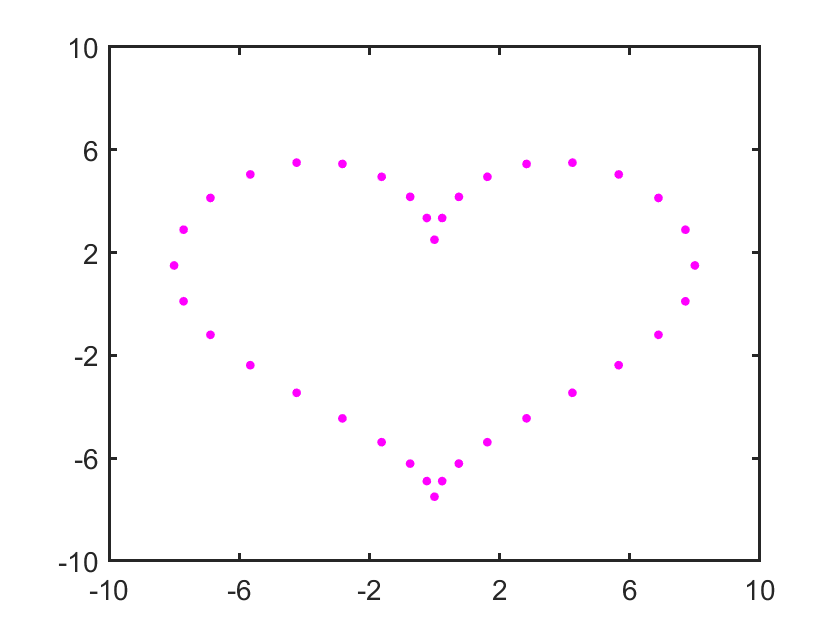}
}
\subfigure[Original trifolium rose curve]{
\includegraphics[scale=0.22]{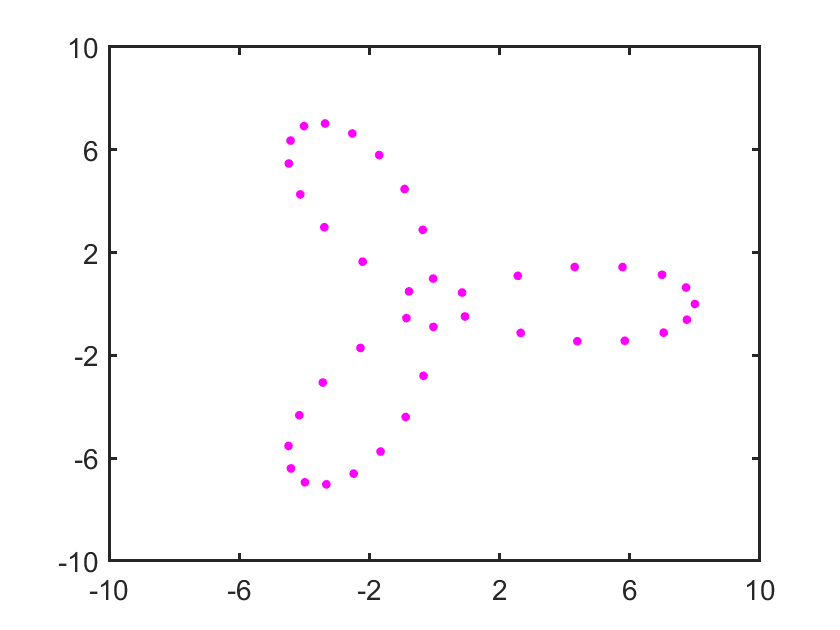}

}
\subfigure[Original star Curve]{
\includegraphics[scale=0.22]{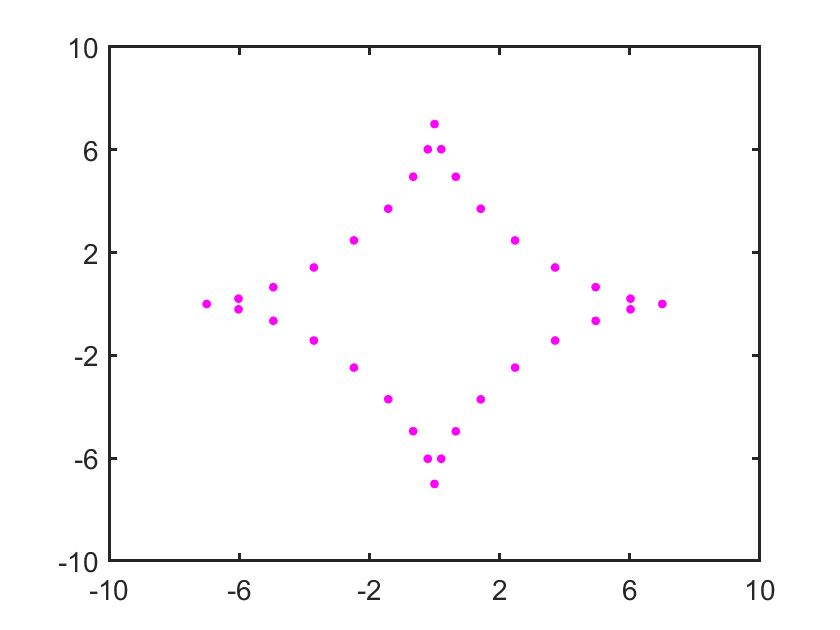}

}

\caption{Reconstruction using multi-frequency far-field data from 8 observation directions for the trajectory of the  extended source moving along three different curves. The first row illustrates the reconstructed motion trajectories of the moving extended source, the second row depicts the original motion trajectories.
} \label{fig:3-8}
\end{figure}

Furthermore, we consider a moving kite, the diameter of the kite is $3$. Hence, the impulse moments are set $t_j=0,4,8,12,16$. The motion trajectory of the center of the kite is $a(t)=(t-8, 4*\sin(\frac{\pi}{8}(t-8)+\pi))$.

\begin{figure}[H]
\centering
\subfigure[$t_0=0$ ]{
\includegraphics[scale=0.12]{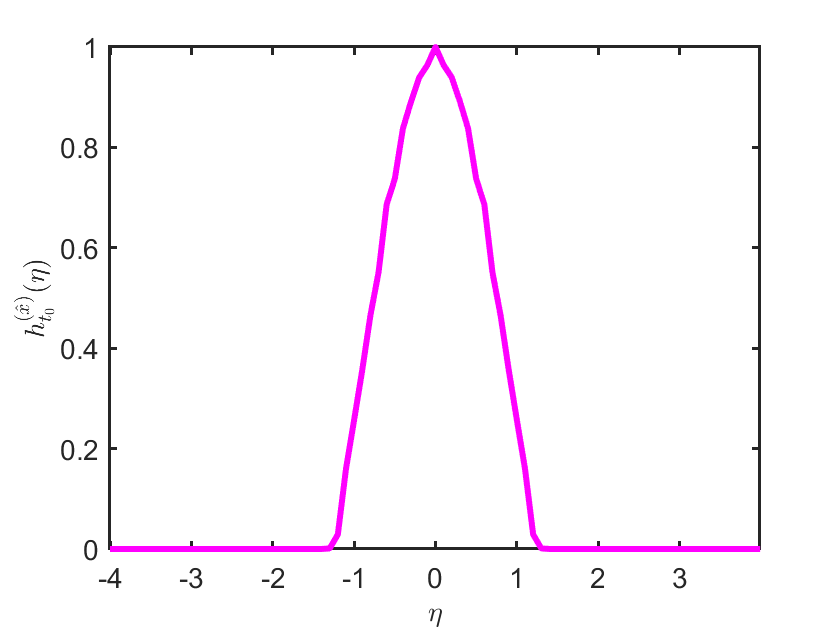}
}
\subfigure[$t_1=4$]{
\includegraphics[scale=0.12]{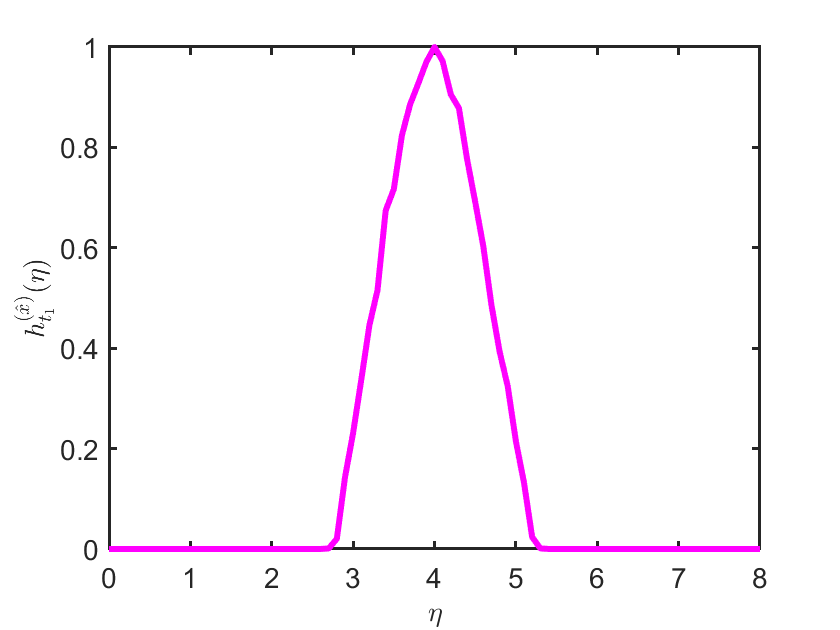}
}
\subfigure[$t_2=8$ ]{
\includegraphics[scale=0.12]{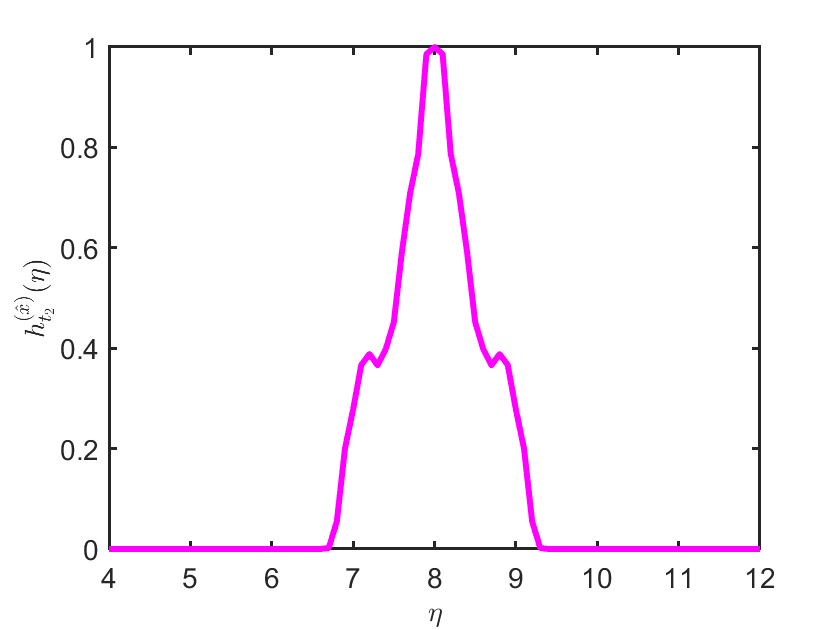}
}
\subfigure[$t_3=12$ ]{
\includegraphics[scale=0.12]{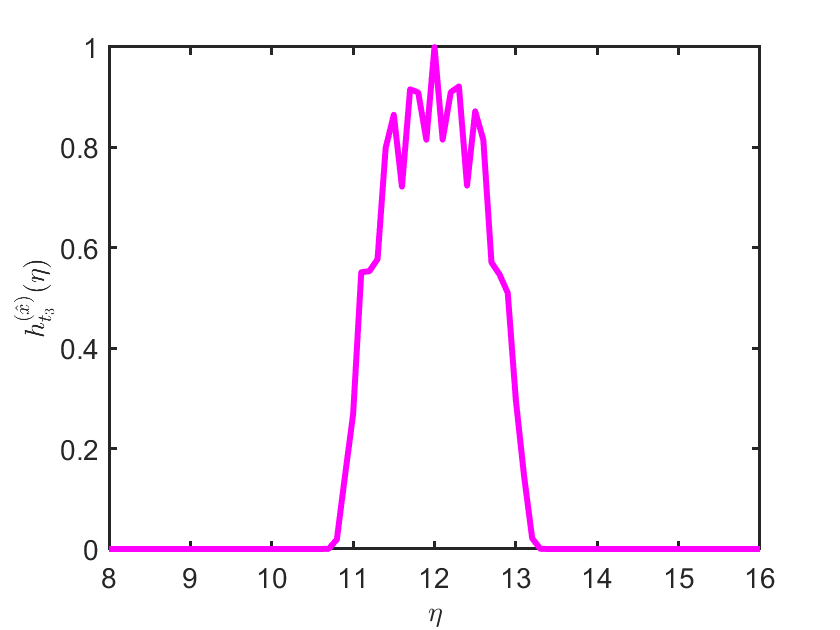}
}
\subfigure[$t_4=16$ ]{
\includegraphics[scale=0.12]{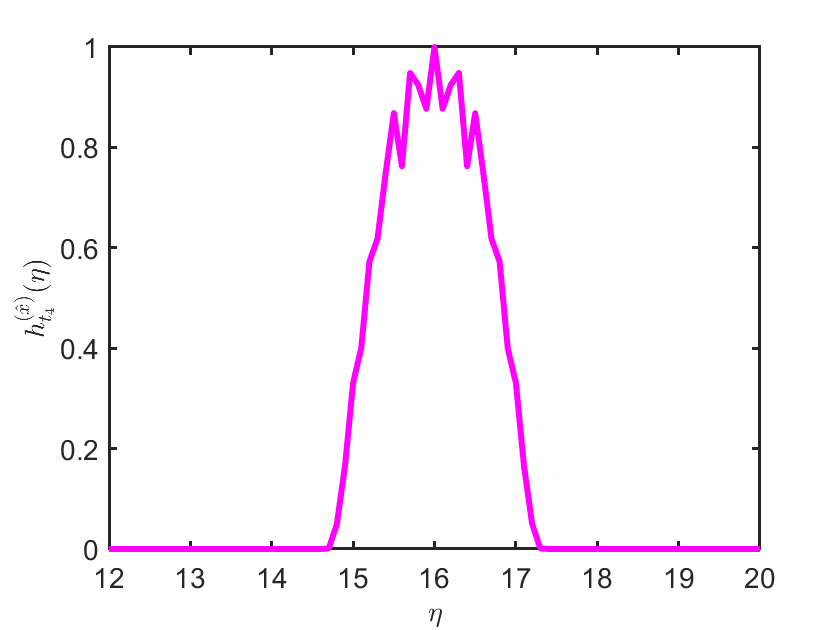}
}
\caption{Reconstruction for the trajectory of a moving kite along a sin function line.
} \label{fig:3-9}
\end{figure}

\begin{figure}[H]
\centering
\subfigure[Original trajectory]{
\includegraphics[scale=0.3]{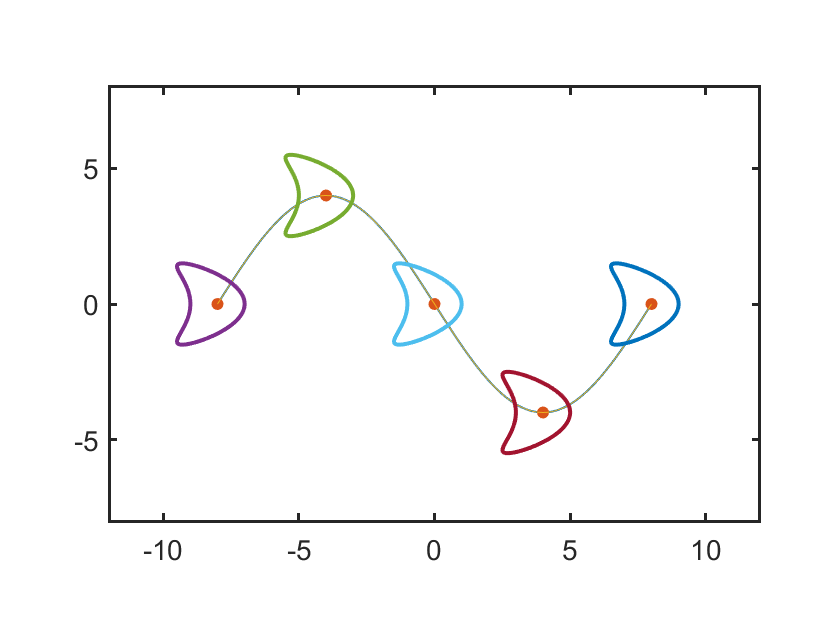}
}
\subfigure[Reconstruction]{
\includegraphics[scale=0.3]{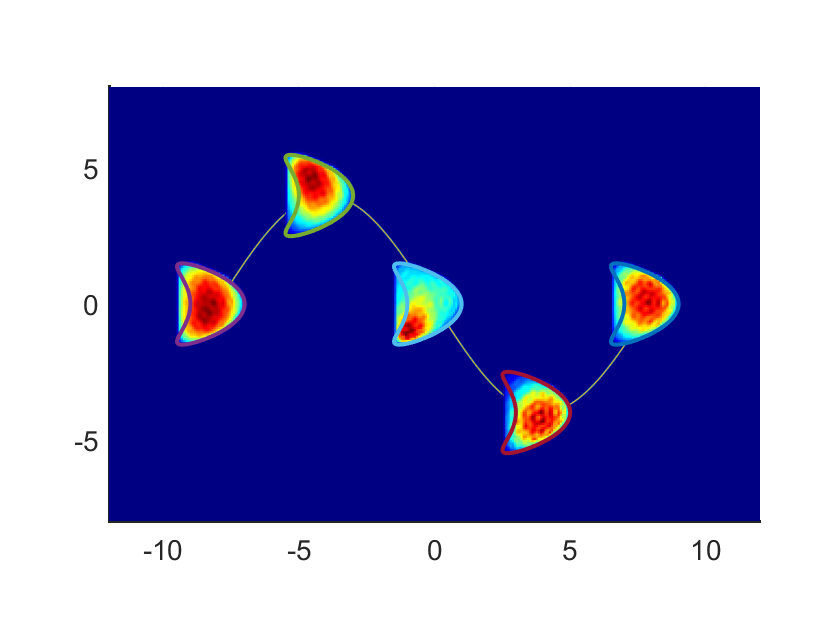}

}
\caption{Reconstruction for the trajectory of a moving kite along a sin function line.
} \label{fig:3-9}
\end{figure}

\subsection{Noise test}

We evaluate sensitivity with respect to the noisy data by selecting the numerical example in Figure \ref{fig:3-8}(a), which involves  cardioid recovery. The far-field data are polluted by Gaussian noise, as shown below:
\begin{equation}
F_{\delta}^{(\hat x)}:=F^{(\hat x)}+\delta \| F^{(\hat x)}\|_2 \mathcal M,
\end{equation}
where the data matrix $F^{(\hat x)}$ is defined by (\ref{matF}), $\delta$ represents the noise level and $\mathcal M \in \R^{N\times N}$ is a uniformly distributed random -matrix with the
random variable ranging from $-1$ to $1$.

In Figure \ref{fig:3-9}, the noise levels tested are $2\%$, $5\%$ and $10\%$.  The  the inversion algorithm's performance is evaluated with a noise level of $2\%$ In Figure \ref{fig:3-9}(a). The reconstructed image shows minor artifacts, indicating that the algorithm is relatively robust to low levels of noise.  Figure \ref{fig:3-9}(b) demonstrates the inversion results when the noise level is increased to $5\%$. The image quality slightly deteriorates compared to the $2\%$ noise level, with more noticeable artifacts appearing in the reconstructed image. The results depicted in Figure \ref{fig:3-9}(a) are obtained with a noise level of $10\%$. As expected, the higher noise level introduces significant artifacts, making the reconstructed trajectory  almost invisible.

\begin{figure}[H]
\centering
\subfigure[$\delta=0\%$ ]{
\includegraphics[scale=0.16]{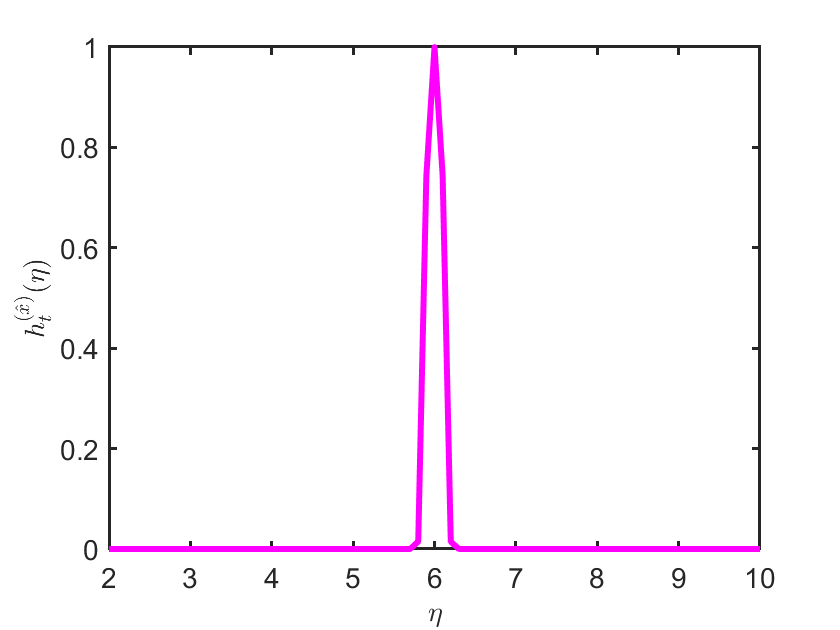}
}
\subfigure[$\delta=2\%$]{
\includegraphics[scale=0.16]{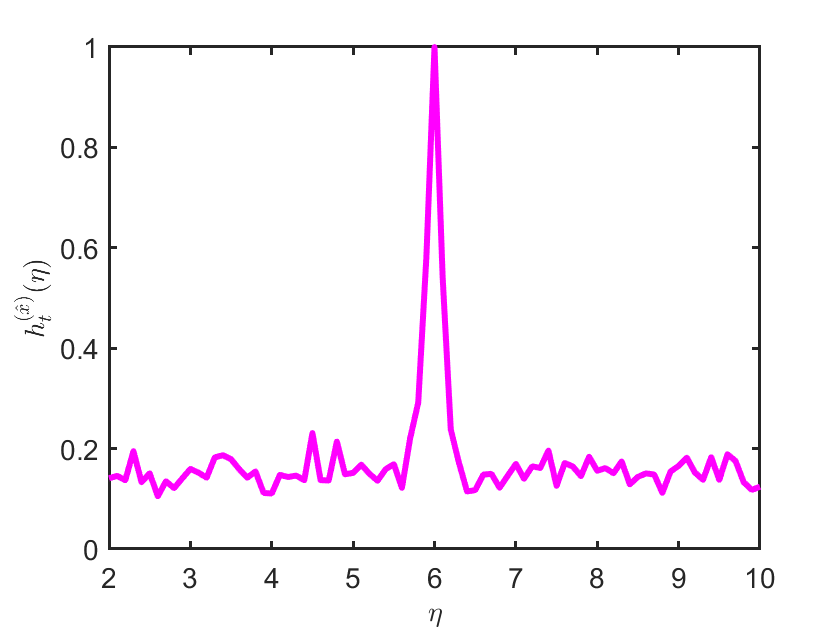}

}
\subfigure[$\delta=5\%$]{
\includegraphics[scale=0.16]{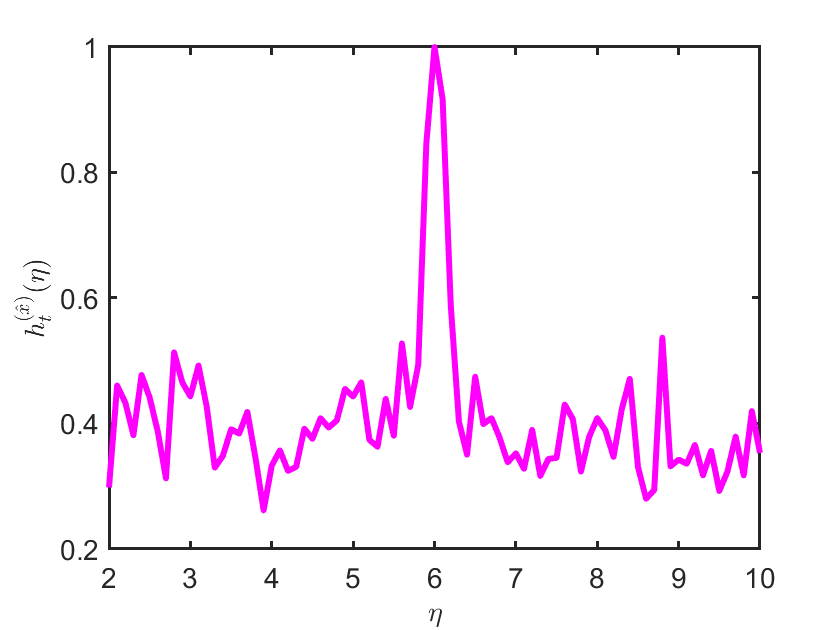}

}
\subfigure[$\delta=10\%$]{
\includegraphics[scale=0.16]{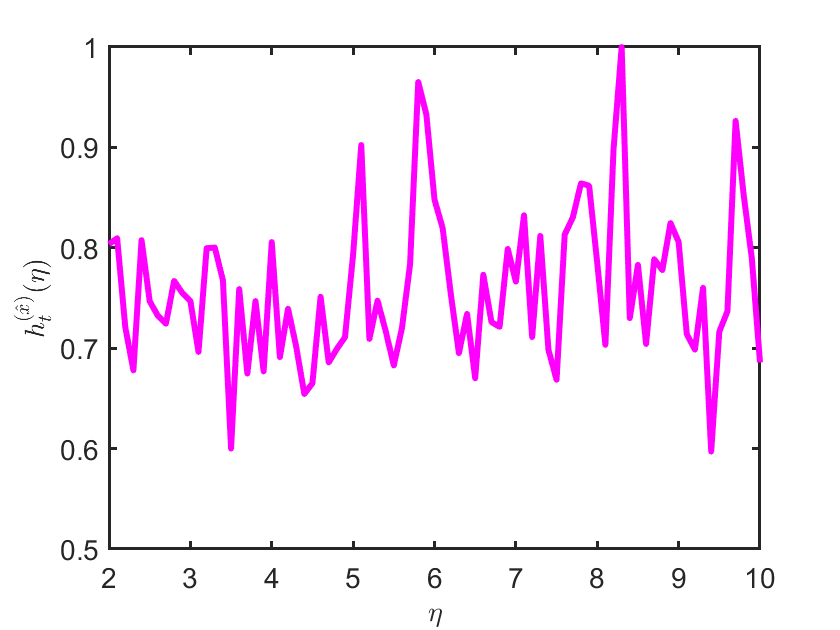}

}
\caption{Reconstruction for the impulse moment $t_j=6$ at different noise levels $\delta$.
} \label{fig:3-9}
\end{figure}

\begin{figure}[H]
\centering
\subfigure[$\delta=2\%$ ]{
\includegraphics[scale=0.22]{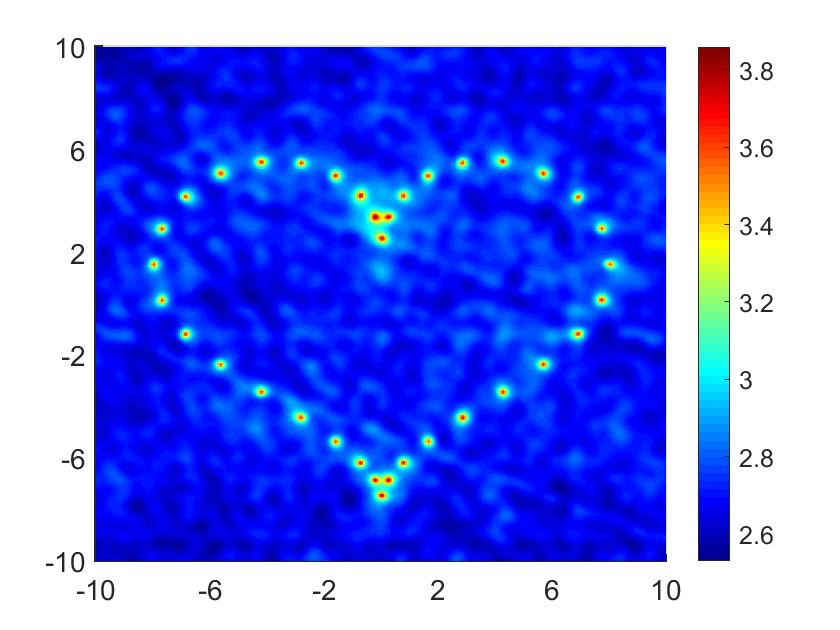}
}
\subfigure[$\delta=5\%$]{
\includegraphics[scale=0.22]{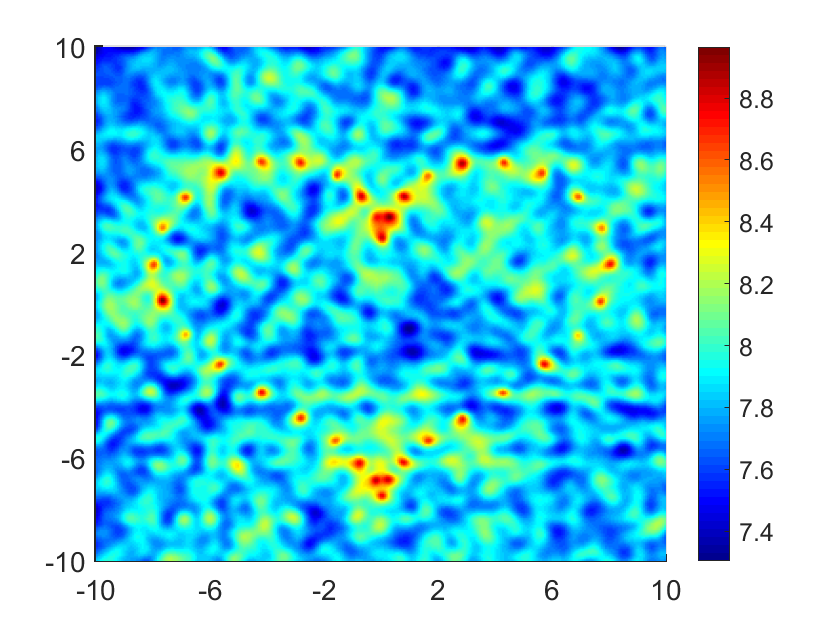}

}
\subfigure[$\delta=10\%$]{
\includegraphics[scale=0.22]{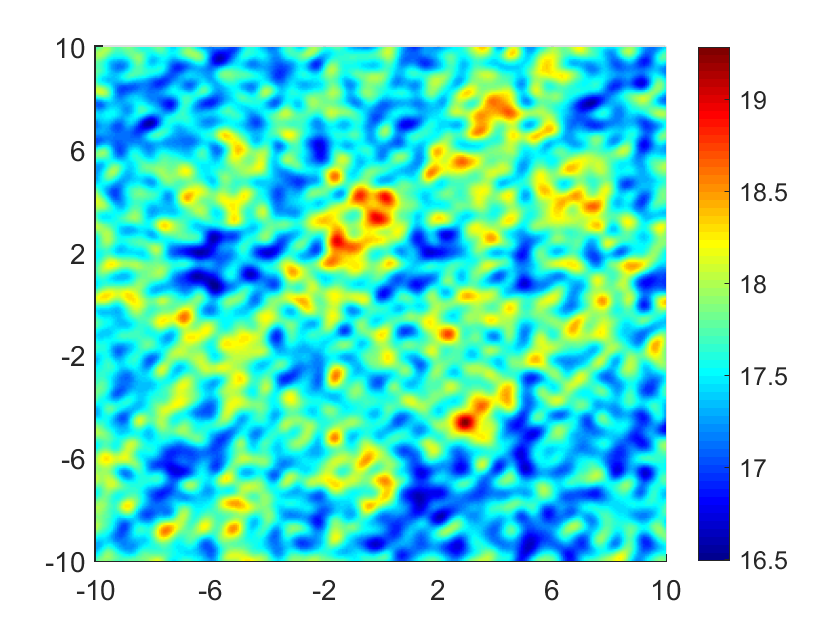}

}

\caption{Reconstruction for the trajectory of a extended source moving along cardioid using observation data at different noise levels $\delta$.
} \label{fig:3-9}
\end{figure}

%\subsection{Reconstruction  in $\R^3$}

%We will show a helices in $\R^3$.

\subsection{Reconstruction of a moving extended source in $\R^3$}

In this section, we turn our attention to the reconstruction of the trajectory of  a moving extended source in  $\R^3$, specifically demonstrating a helical trajectory parameterized as parameterized as $a(t)=(\cos \frac{1}{4}t, \frac{1}{12}t-5, \sin \frac{1}{4}t)$ for $t\in [0,120]$. We assume the impulse moments are $t_j=2j, j=0,..,60$.  The source supports is constrained by $|x-a(t_j)|\leq 0.1$ for each impulse moment $t_j$. To encompass the entire trajectory of the moving extended  source,
the testing domain is selected to be  $[-6,6]^3$. Given the relatively small source support, the domain is   discretized with a resolution of $150$ points in each dimension. The multifrequency data are collected from these $10$ observation directions.
Figure \ref{fig:3d-1} presents the reconstruction for the helical trajectory of the moving extended source using multifrequency data from $10$ observation directions uniformly distributed on the upper unit hemisphere. Figure \ref{fig:3d-1}(a) features the original helical depicted by a green solid line  with pink small spheres representing  the locations ans shapes at each pulse moment $t_j$.  Figure \ref{fig:3d-1}(b) shows the reconstructions of the moving extended source. For comparative purpose, the original trajectory is superimposed  with a green solid line.
It is evident  that the location and shape of the moving extended source for each pulse moment $t_j$ are accurately reconstructed, despite their shapes not being as spherical as a ball. This is primarily due to our selection of only 10 observation directions, leading to  iso-surfaces that inevitably deviate from a perfect sphere.  The reconstructions for the helix trajectory in $\R^3$ further verify the accuracy of our algorithm.

\begin{figure}[H]
\centering
\subfigure[Original trajectory]{
\includegraphics[scale=0.3]{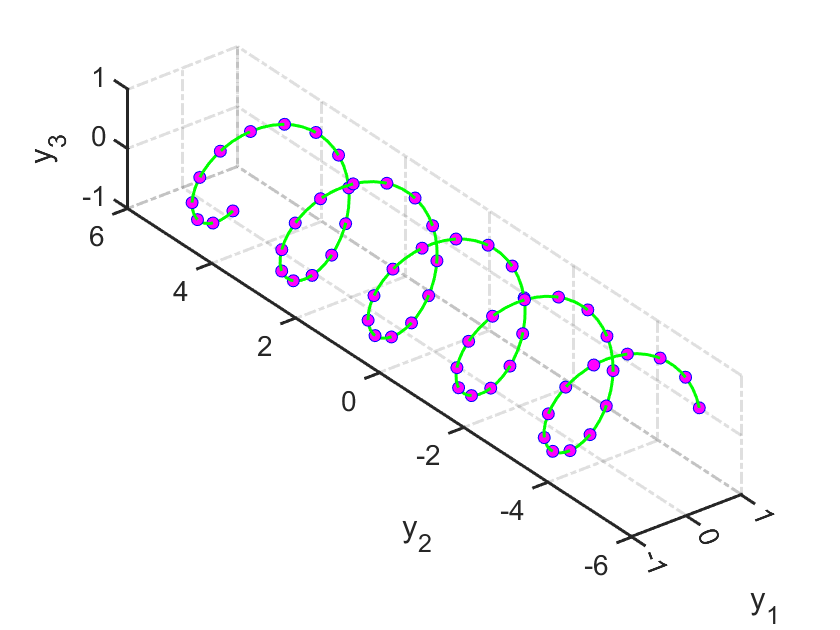}
}
\subfigure[Reconstructed trajectory]{
\includegraphics[scale=0.3]{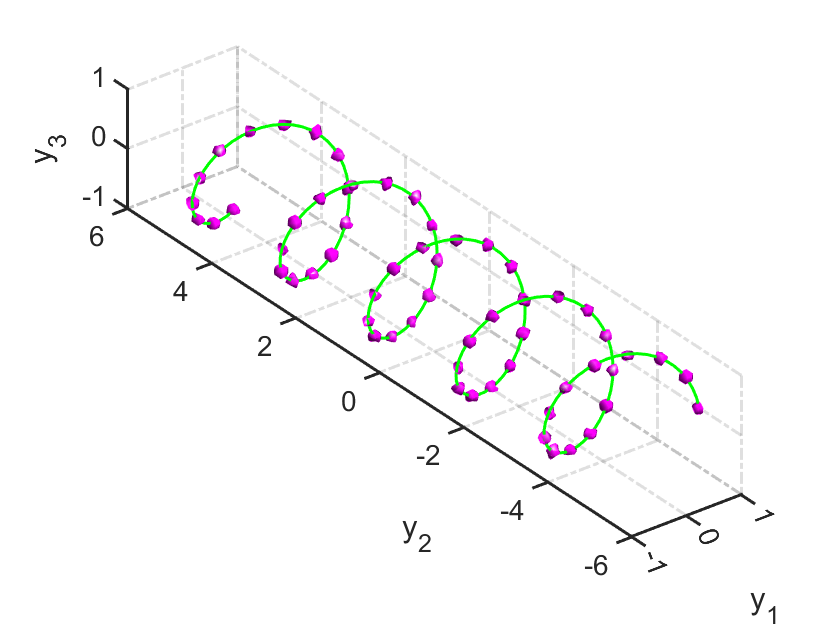}

}\caption{
Reconstruction using multi-frequency far-field data from 10 observation directions for the trajectory of the extended source moving along a helix.} \label{fig:3d-1}
\end{figure}

% \begin{figure}[H]
% \centering
% \subfigure[$\delta=2\%$ ]{
% \includegraphics[scale=0.3]{Fig/Fig3/6-3d.png}
% }
% \subfigure[$\delta=5\%$]{
% \includegraphics[scale=0.3]{Fig/Fig3/5-3d.png}

% }\caption{Reconstruction for the trajectory of a extended source moving along cardioid using observation data at different noise levels $\delta$.
% }
% \end{figure}

% \begin{figure}[H]
% \centering
% \includegraphics[scale=0.6]{Fig/Fig3/5-1.png}
% \caption{Reconstruction for the trajectory of a extended source moving along cardioid using observation data at different noise levels $\delta$.
% }
% \end{figure}

% \begin{figure}[H]
% \centering
% \subfigure[$\delta=2\%$ ]{
% \includegraphics[scale=0.3]{Fig/Fig3/5-1.png}
% }
% \subfigure[$\delta=5\%$]{
% \includegraphics[scale=0.3]{Fig/Fig3/5-1.png}
% }
% \caption{Reconstruction for the trajectory of a extended source moving along cardioid using observation data at different noise levels $\delta$.
% }
% \end{figure}

\section*{Acknowledgements}
The work of G. Hu is supported by the National Natural Science Foundation of China (No. 12425112), the Fundamental Research Funds for Central Universities in China (No. 050-63213025) and the Natural Science Foundation of Tianjin (No. 25JCZDJC00970).

\section*{Appendix}
{\mgqb
\newtheorem*{mytheorem}{Theorem A.1}
\begin{mytheorem}
\upshape {(Range Identity)} Let $X \subset U \subset X^*$ be a Gelfand triple with a space $U$ and a Hilbert space $Y$. Let $F: Y\rightarrow Y$, $L: X^*\rightarrow Y$, $T: X\rightarrow X^*$ be linear bounded operators such that $F=LTL^*$. We make the following assumptions
\begin{itemize}
	\item[(i)] $L$ is compact with dense range and thus $L^*$ is compact and one-to-one.
	\item[(ii)] $\real T$ is one-to-one, $\ima T=0$ or $\ima T$ is one-to-one, and the operator $T_{\#}=|\real T| +|\ima T|: X\rightarrow X^*$ is coercive, i.e., there exists $c>0$ with
	\begin{equation*}
	\big\langle T_{\#}\, \varphi, \varphi\big\rangle\geq c\,||\varphi||_X^2\quad\mbox{for all}\quad \varphi\in X,
	\end{equation*}
	where $\big\langle \cdot, \cdot\big\rangle$ represents the dual pairing between $X^*$ and $X$.
\end{itemize}
Then the operator $F_{\#}$ is positive and  the ranges of $F_{\#}^{1/2}:Y\rightarrow Y$ and  $L: X^*\rightarrow Y$ coincide.
\end{mytheorem}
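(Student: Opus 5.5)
The plan is to use the abstract factorization structure $F = LTL^*$ together with a square-root decomposition of $T_\#$. I will not prove the identity $\operatorname{Range}(F_\#^{1/2}) = \operatorname{Range}(L)$ directly. Instead I will show that $F_\# = \tilde L\,\tilde L^*$ with $\operatorname{Range}(\tilde L) = \operatorname{Range}(L)$. The standard fact that $\operatorname{Range}(A^{1/2}) = \operatorname{Range}(B)$ whenever $A = BB^*$, for $A$ self-adjoint and nonnegative, then gives the conclusion.

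\emph{Step 1: the real and imaginary parts factor through $L$.} Set $\operatorname{Re} F = (F+F^*)/2$ and $\operatorname{Im} F = (F-F^*)/(2i)$. Using $F = LTL^*$, these become
\[
\operatorname{Re} F = L(\operatorname{Re} T)L^*, \qquad \operatorname{Im} F = L(\operatorname{Im} T)L^*,
\]
where $\operatorname{Re} T$ and $\operatorname{Im} T$ are self-adjoint operators $X \to X^*$.

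\emph{Step 2: spectral structure of the parts.} The hypotheses say $\operatorname{Re} T$ (and $\operatorname{Im} T$, when it is nonzero) is one-to-one. Combined with compactness of $L$, I expect each of $\operatorname{Re} F$ and $\operatorname{Im} F$ to be compact self-adjoint on $Y$. The aim is to split each into a positive part and a negative part, and to show that taking absolute values is compatible with the factorization. Concretely, I want $X^*$-valued operators $W_R$ and $W_I$ with
\[
|\operatorname{Re} F| = L\,W_R\,L^*, \qquad |\operatorname{Im} F| = L\,W_I\,L^*,
\]
and with $W_R + W_I$ coercive on $X$, inheriting coercivity from the assumption on $T_\#$.

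\emph{Step 3: this is the main obstacle.} Absolute values do not commute with the sandwich $L(\cdot)L^*$. The route I will take is the one used in Kirsch--Grinberg, via Lechleiter's version for Gelfand triples. The key observation is that $\operatorname{Re} T$ is the sum of a coercive part and a compact part. So $\operatorname{Re} F$ has only finitely many eigenvalues of one sign. I then correct $\operatorname{Re} T$ by a finite-rank operator, determined through $L^*$ applied to the finitely many eigenvectors of $\operatorname{Re} F$ with the wrong sign. This produces a $W_R$ with $|\operatorname{Re} F| = L W_R L^*$, and it uses injectivity of $L^*$ together with injectivity of $\operatorname{Re} T$ to keep $W_R$ coercive modulo compact terms. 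The same argument applies to $\operatorname{Im} T$.

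\emph{Step 4: conclude.} Once $F_\# = L\,W\,L^*$ with $W = W_R + W_I$ coercive, self-adjoint and bounded $X \to X^*$, the Lax--Milgram theorem in the Gelfand triple gives a factorization $W = G^*G$ with $G: X \to U$ boundedly invertible. Hence $F_\# = (LG^*)(LG^*)^*$. Since $G^*$ is an isomorphism onto $X^*$, $\operatorname{Range}(LG^*) = \operatorname{Range}(L)$. Finally, the general operator-range lemma, $\operatorname{Range}(A^{1/2}) = \operatorname{Range}(B)$ for $A = BB^*$ (proved by polar decomposition, or by the Douglas majorization lemma), gives $\operatorname{Range}(F_\#^{1/2}) = \operatorname{Range}(L)$. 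Positivity of $F_\#$ is immediate from $\langle F_\#\phi,\phi\rangle = \langle W L^*\phi, L^*\phi\rangle \ge c\|L^*\phi\|^2 > 0$ for $\phi \neq 0$, by injectivity of $L^*$.
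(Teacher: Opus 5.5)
Your Step 3 rests on the claim that $\real T$ (and likewise $\ima T$) is a coercive operator plus a compact one. That is what makes $\real F$ have only finitely many eigenvalues of one sign, and what makes $|\real F|-\real F$ a finite-rank term that can be pulled back through $L$. This is the structural hypothesis of the Kirsch--Grinberg range identity to which the paper defers, but it is not a hypothesis of Theorem A.1. Nor does it follow from coercivity of $T_\#=|\real T|+|\ima T|$ plus injectivity of $\real T$ and $\ima T$: these allow both parts to be indefinite, with infinite-dimensional positive and negative spectral subspaces. That is exactly the paper's own situation, where $\real\mathcal{T}_\epsilon$ is multiplication by $S(y)\cos(\kappa(c^{-1}\hat{x}\cdot y-t_0))/\sqrt{2\pi}$, which in general changes sign on sets of positive measure. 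Then $\real F$ may have infinitely many eigenvalues of each sign, no finite-rank correction exists, and the coercive $W$ of Step 4 is never produced.

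The gap cannot be closed, because the statement is false under its stated hypotheses. Take the trivial triple $X=U=X^*=\ell^2\oplus\ell^2$, $Y=\ell^2$, $K=\mathrm{diag}(1/n)$, $L(u,v)=K(u+v)$, so that $L^*\phi=(K\phi,K\phi)$, and $T=(1+i)\,(I\oplus(-I))$. Then:
\begin{itemize}
\item $L$ is compact with dense range;
\item $\real T=\ima T=I\oplus(-I)$ are injective;
\item $T_\#=2I$ is coercive;
\item yet $LTL^*\phi=(1+i)(K^2\phi-K^2\phi)=0$ for every $\phi$.
\end{itemize}
Hence $F_\#=0$ is not positive and $\range(F_\#^{1/2})=\{0\}\neq\range(L)$. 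With $T=I\oplus(-I)$ the same happens in the case $\ima T=0$.

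Even granting the coercive-plus-compact decomposition, your route to coercivity of $W_R+W_I$ fails. Injectivity of $L^*$, and of $\real T$ on all of $X$, says nothing about how $T$ acts on the non-dense subspace $\overline{\range(L^*)}$. That is exactly where strict positivity is needed before a compact perturbation of a coercive operator can be upgraded to a coercive one.

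For instance, take $T=I\oplus(-1)$ on $\ell^2\oplus\mathbb{C}$. It is the identity plus a rank-one operator and is injective. With $L(w,\alpha)=Kw+\alpha e_1$ one gets $F=K^2-e_1e_1^*=\mathrm{diag}(0,1/4,1/9,\dots)$. So $Fe_1=0$, while $e_1=L(0,1)\in\range(L)\setminus\range(F^{1/2})$. This is again a counterexample to Theorem A.1 itself.

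Your outline can therefore only become a proof of a modified theorem. That theorem would need to add a coercive-plus-compact structure for $\real T$ (and a sign condition or similar structure for $\ima T$), together with a non-degeneracy or positivity condition for $T$ on $\overline{\range(L^*)}$. As a proof of Theorem A.1 as stated, it cannot succeed.
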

If $\ima T=0$, the result of Theorem A.1 was justified by \cite[Theorem 2.15]{KG08}. If $\ima T \neq 0$, the proof of Theorem A.1 can be carry out following \cite[Theorem 2.2]{GGH2022}.
}

\newtheorem*{mtheorem}{Theorem A.2}
\begin{mtheorem}
\upshape {(Picard's theorem \cite[Theorem 4.8]{CK1998})}
Let $A:X\to Y$ be a compact linear operator with singular system $(\mu_n,\varphi_n,g_n)$. The equation of the first kind $A\phi =f$ is solvable if and only if $f$ belongs to the orthogonal complement $N(A^\ast)^{\perp}$ and satisfies
$$\sum\limits_{n=1}^{\infty}\frac{1}{\mu^2_n}|(f,g_n)|^2<\infty.$$
In this case a solution is given by
$$
\phi=\sum_{n=1}^\infty \frac{1}{\mu_n}(f,g_n)\,\varphi_n.
$$
\end{mtheorem}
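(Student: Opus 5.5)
The plan is the standard singular-value-decomposition argument. I use the defining properties of a singular system $(\mu_n,\varphi_n,g_n)$ of the compact operator $A$: $\mu_n>0$, $\{\varphi_n\}$ and $\{g_n\}$ are orthonormal in $X$ and $Y$, and $A\varphi_n=\mu_n g_n$, $A^\ast g_n=\mu_n\varphi_n$. The one structural fact needed beyond these is that $\{g_n\}$ is a complete orthonormal system of $N(A^\ast)^\perp=\overline{A(X)}$.

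I will obtain it from the spectral theorem for the compact, self-adjoint, nonnegative operator $AA^\ast:Y\to Y$. Its eigenvectors for the nonzero eigenvalues $\mu_n^2$ are exactly the $g_n$, and they give the orthogonal decomposition $Y=N(AA^\ast)\oplus\overline{\mathrm{span}\{g_n\}}$. Moreover $N(AA^\ast)=N(A^\ast)$, because $(AA^\ast g,g)=\|A^\ast g\|^2$. Hence every $f\in N(A^\ast)^\perp$ satisfies $f=\sum_n (f,g_n)g_n$.

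\emph{Necessity.} Suppose $A\phi=f$ for some $\phi\in X$.
\begin{itemize}
\item Then $f\in A(X)\subset N(A^\ast)^\perp$, since $(A\phi,g)=(\phi,A^\ast g)=0$ for every $g\in N(A^\ast)$.
\item Next, $(f,g_n)=(A\phi,g_n)=(\phi,A^\ast g_n)=\mu_n(\phi,\varphi_n)$.
\item Therefore $\sum_n \mu_n^{-2}|(f,g_n)|^2=\sum_n|(\phi,\varphi_n)|^2\le\|\phi\|^2<\infty$ by Bessel's inequality.
\end{itemize}

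\emph{Sufficiency.} Assume $f\in N(A^\ast)^\perp$ and that the series converges.
\begin{itemize}
\item The coefficients $\mu_n^{-1}(f,g_n)$ are square-summable and $\{\varphi_n\}$ is orthonormal, so the Riesz–Fischer theorem gives that $\phi:=\sum_n\mu_n^{-1}(f,g_n)\varphi_n$ converges in $X$.
\item Since $A$ is bounded, it may be applied term by term: $A\phi=\sum_n\mu_n^{-1}(f,g_n)\mu_n g_n=\sum_n(f,g_n)g_n$.
\item By the completeness fact above, this sum equals $f$. This proves solvability and gives the stated formula for the solution.
\end{itemize}

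The only step needing care is the completeness of $\{g_n\}$ in $N(A^\ast)^\perp$, which is where the spectral theorem for $AA^\ast$ and the identity $N(AA^\ast)=N(A^\ast)$ are used. Everything else is Bessel's inequality and continuity of $A$.
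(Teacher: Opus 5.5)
Your argument is correct and is the standard proof of the cited result (Colton--Kress, Theorem~4.8), which the paper quotes without proof. Necessity goes through $(f,g_n)=\mu_n(\phi,\varphi_n)$ and Bessel's inequality. Sufficiency goes through Riesz--Fischer and the expansion $f=\sum_n (f,g_n)g_n$ for $f\in N(A^\ast)^\perp$; the reference takes this expansion from the singular value decomposition, and you rederive it from the spectral theorem for $AA^\ast$.

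One point should be stated precisely: your claim that ``the eigenvectors of $AA^\ast$ for nonzero eigenvalues are exactly the $g_n$'' requires the singular system to list every nonzero singular value with its full multiplicity. That is part of the definition of a singular system, not a consequence of the four relations you list, and it is exactly what makes $A\phi=f$ rather than merely a projection of $f$.
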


%\end{thebibliography}


\begin{thebibliography}{00}

\bibitem{AHLS} A. Alzaalig, G. Hu, X. Liu and J. Sun, Fast acoustic source imaging using multi-frequency sparse data, Inverse Problems, 36 (2020): 025009.


\bibitem{BLLT} G. Bao, P. Li,  J. Lin and F. Triki, Inverse scattering problems with multi-frequencies, Inverse Problems, 31 (2015): 093001.

\bibitem{BLT10} G. Bao, J. Lin, and F. Triki, A multi-frequency inverse source problem, J. Differential Equations, 249 (2010): 3443--3465.

\bibitem{BLRX} G. Bao, S. Lu, W. Rundell and B. Xu, A recursive algorithm for multi-frequency acoustic inverse source
problems, SIAM J. Numer. Anal., 53 (2015): 1608-1628.

%\bibitem{B} E. Blasten, Nonradiating sources and transmission eigenfunctions vanish at corners and edges, SIAM J. Math. Anal., 6 (2018):  6255--6270.

%\bibitem{BC}N. Bleistein and J. Cohen, Nonuniqueness  in  the  inverse  source  problem  in  acoustics  and electromagnetics, J. Math. Phys., 18 (1977): 194--201.


\bibitem{CGMS2020} B. Chen, Y. Guo, F. Ma and Y. Sun, Numerical schemes to reconstruct three-dimensional time-dependent point sources of acoustic waves, Inverse Problems, 36 (2020): 075009.


%\bibitem{CB2008} M. Cheney and B. Borden,   Imaging moving targets from scattered waves, Inverse Problems 24 (2008): 035005.

\bibitem{CIL} J. Cheng, V. Isakov and S. Lu,  Increasing stability in the inverse source problem with many frequencies, J. Differential Equations, 260 (2016): 4786-4804.

%\bibitem{Co79}J. Cooper, Scattering of plane waves by a moving obstacle. Arch. Ration. Mech. Anal. 71 (1979): 113-149.

\bibitem{CK1998}D. Colton and R. Kress,  Inverse Acoustic and Electromagnetic Scattering Theory, 4th edition, Springer, Berlin, 2019.

%\bibitem{CoStra} J. Cooper and W. Strauss, Scattering of waves by periodically moving bodies. J. Funct. Anal. 47 (1982): 180-229.

%\bibitem{DW1973} A. J. Devaney and E. Wolf,  Radiating and non-radiating classical current distributions and the fields they generate,  Phys. Rev. D,  8 (1973): 1044-7.

%\bibitem{DS1982} A. J. Devaney and G. Sherman, Nonuniqueness in inverse source and scattering problems, IEEE Trans. Antennas Propag., 30 (1982): 1034-1037.


%\bibitem{EN2011} A. El-Badia, T. Nara
%An inverse source problem for Helmholtz's equation from the Cauchy data with a single wave number,
%Inverse Problems, 27 (2011): 105001.


\bibitem{EV09} M. Eller and N. Valdivia, Acoustic source identification using multiple frequency information, Inverse Problems, 25 (2009): 115005.


%\bibitem{FSY} A. C. Fannjiang, T. Strohmer and P. Yan, Compressed remote sensing of sparse objects, SIAM J. Imag. Sci., 3 (2010): 595-618.


\bibitem{FGPT2017} J. Fournier, J. Garnier, G. Papanicolaou and C. Tsogka,  Matched-filter and correlation-based imaging for fast moving objects using a sparse network of receivers, SIAM J. Imag. Sci., 10 (2017): 2165-2216.


\bibitem{GF2015} J. Garnier and M. Fink, Super-resolution in time-reversal focusing on a moving source, Wave Motion, 53 (2015): 80-93.

\bibitem{GS} R. Griesmaier and C. Schmiedecke, A Factorization method for multifrequency inverse source problem with sparse far-field measurements, SIAM J. Imag. Sci., 10 (2017): 2119-2139.

%\bibitem{G11} R. Griesmaier, Multi-frequency orthogonality sampling for inverse obstacle scattering problems, Inverse Problems, 27 (2011):  085005.

%\bibitem{GH15} R. Griesmaier and M. Hanke, Multifrequency impedance imaging with multiple signal classification, SIAM J. Imag. Sci., 8 (2015): 939-967.

%\bibitem{GHT12} R. Griesmaier, M. Hanke, and T. Raasch, Inverse source problems for the Helmholtz equation and the windowed Fourier transfrom, SIAM J. Sci. Comput., 34 (2012): A1544-A1562.

%\bibitem{GHT13}R. Griesmaier, M. Hanke, and T. Raasch, Inverse source problems for the Helmholtz equation and
%the windowed Fourier transform II, SIAM J. Sci. Comput., 35 (2013): A2188-A2206.

%\bibitem{GHS}R. Griesmaier, M. Hanke and J. Sylvester, Far field splitting for the Helmholtz equation, SIAM J. Numer. Anal., 52 (2014): 343-362.


\bibitem{GS17} R. Griesmaier and C. Schmiedecke, A multifrequency MUSIC algorithm for locating small inhomogeneities in inverse scattering, Inverse Problems, 33 (2017): 035015.

\bibitem{GGH2022} H. Guo and G. Hu, Inverse wave-number-dependent source problems for the Helmholtz equation, SIAM J. Numer. Anal., 62 (2024): 1372-1393.

\bibitem{GHM2023} H. Guo, G. Hu and G. Ma, Imaging a moving point source from multi-frequency data measured at one and sparse observation directions (part I): far-field case, SIAM J. Imag. Sci., 16 (2023): 1535-1571.

\bibitem{GHM2} H. Guo, G. Hu and G. Ma, Inverse wave-number-dependent source problems for the Helmholtz equation with partial information on radiating period, arXiv: 2401.07193.


%\bibitem{GHZ22} H. Guo, G. Hu and M. Zhao,
%Direct sampling method to inverse wave-number-dependent source problems: determination of the support of a stationary source, Inverse Problems, 39 (2023): 105008.



\bibitem{HKLZ2019} G. Hu, Y. Kian, P. Li and Y. Zhao, Inverse moving source problems in electrodynamics, Inverse Problems, 35 (2019): 075001.

\bibitem{HKZ2020} G. Hu, Y. Kian and Y. Zhao, Uniqueness to some inverse source problems for the wave equation in unbounded domains, Acta Mathematicae Applicatae Sinica, English Series, 36 (2020): 134-150.

%\bibitem{HL2020} G. Hu and J. Li, Uniqueness to inverse source problems in an inhomogeneous medium with a single far-field pattern, SIAM J. Math. Anal., 52 (2020): 5213-5231.


\bibitem{HLY20} G. Hu, Y. Liu  and M. Yamamoto, Inverse moving source problem for fractional diffusion(-wave) equations: Determination of orbits, Inverse Problems and Related Topics ed J Cheng, S Lu and M Yamamoto (Singapore: Springer) pp. 81-100, 2020.



%\bibitem{Ik99} M. Ikehata, Reconstruction of a source domain from the Cauchy data, Inverse Problems, 15
%(1999): 637--645.

\bibitem{T2022}H. A. Jebawy, A. Elbadia and F. Triki, Inverse moving point source problem for the wave equation, Inverse Problems, 38 (2022): 125003.

%\bibitem{JLZ19} X.  Ji, X. Liu and B. Zhang, Inverse acoustic scattering with phaseless far field data: uniqueness, phase retrieval, and direct sampling methods, SIAM J. Imag. Sci., 12 (2019):  1163-1189.

%\bibitem{JLZ}X.  Ji, X. Liu and B. Zhang, Target reconstruction with a reference point scatterer using phaseless far field patterns, SIAM J. Imag. Sci., 12 (2019): 372-391.

\bibitem{JLZ2019} X.  Ji, X. Liu and B. Zhang,  Phaseless inverse source scattering problem: phase retrieval, uniqueness and direct sampling methods, J. Comput. Phys. X, 1 (2019): 100003.

\bibitem{K98} A. Kirsch, Characterization of the shape of a scattering obstacle using the spectral data of the far field
operator, Inverse Problems, 14 (1998):  1489-1512.

%%\bibitem{K2002} A. Kirsch, The MUSIC algorithm and the factorization method in inverse scattering theory for inhomogeneous media, Inverse Problems, 18 (2002): 1025-1040.

\bibitem{KG08} A. Kirsch and N. Grinberg, {\rm The Factorization Method for Inverse Problems}, Oxford University Press,
Oxford, UK, 2008.

%\bibitem{KS03} S. Kusiak and J. Sylvester, The scattering support, Comm. Pure Appl. Math., 56 (2003): 1525-1548.

\bibitem{LY} P. Li and G. Yuan, Increasing stability for the inverse source scattering problem with multi-frequencies, Inverse Problems and Imaging, 11 (2017): 745-759.



\bibitem{LGS2021} Y. Liu, Y. Guo, and J. Sun, A deterministic-statistical approach to reconstruct moving sources using sparse partial data, Inverse Problems, 37 (2021): 065005.

\bibitem{LGY21} Y. Liu, G. Hu and M. Yamamoto, Inverse moving source problem for time-fractional evolution equations: determination of profiles, Inverse Problems, 37 (2021): 084001.

\bibitem{LMZ}
X. Liu, S. Meng and B. Zhang, Modified sampling method with near field measurements, SIAM J. Appl. Math., 82 (2022): 244-266.

\bibitem{Liu2021}Y. Liu, Numerical schemes for reconstructing profiles of moving sources in (time-fractional) evolution equations, RIMS Kokyuroku, 2174 (2021): 73-87.

%\bibitem{MH22} G. Ma and G. Hu,  Factorization method with one plane wave: from model-driven and data-driven perspectives, Inverse Problems, 38 (2022):  015003.

\bibitem{MGH2023} G. Ma, H. Guo and G. Hu, Imaging a moving point source from multi-frequency data measured at one and sparse observation points (part II): near-field case in 3D, SIAM J. Imag. Sci., 17 (2024), 1377-1414.


\bibitem{NIO2012} E. Nakaguchi, H. Inui and K. Ohnaka, An algebraic reconstruction of a moving point source for a scalar wave equation, Inverse Problems, 28 (2012): 065018.

\bibitem{Ohe2011} T. Ohe, H. Inui and K. Ohnaka,  Real-time reconstruction of time-varying point sources in a three-dimensional scalar wave equation, Inverse Problems, 27 (2011): 115011.


%\bibitem{P10} R. Potthast, A study on orthogonality sampling, Inverse Problems, 26 (2010): 074015.

%\bibitem{Sy06} J. Sylvester, Notions of support for far fields, Inverse Problems, 22 (2006): 1273--1288.


%\bibitem{SK05} J. Sylvester and J. Kelly, A scattering support for broadband sparse far field measurements, Inverse
%Problems, 21 (2005): 759-771.






\bibitem{T2020} O. Takashi, Real-time reconstruction of moving point/dipole wave sources from boundary measurements, Inverse Probl. Sci. Eng., 28 (2020): 1057-1102.

\bibitem{WKT2022}S. Wang, M. Karamehmedovic and F. Triki, Localization of moving sources: uniqueness, stability and Bayesian inference, SIAM Journal on Applied Mathematics, 83 (2023): 1049-1073.

\bibitem{ZG} D. Zhang and Y. Guo,  Fourier method for solving the multi-frequency inverse source problem for the Helmholtz equation, Inverse Problems, 31 (2015): 035007.


%%%%%%%%%%%%%%%%%%%%%%%%%%%%%%%%%%%%%



































\end{thebibliography}
\end{document}